\theoremstyle{plain}
\newtheorem{theorem}{Theorem}[section]
\newtheorem{lemma}[theorem]{Lemma}
\newtheorem{prop}[theorem]{Proposition}
\theoremstyle{definition}
\newtheorem{defi}[theorem]{Definition}
\newtheorem{example}[theorem]{Example}
\theoremstyle{remark}
\newtheorem{rem}[theorem]{Remark}
\numberwithin{equation}{section}
\newcommand{\ai}{\ensuremath{A_{\infty}}}
\newcommand{\li}{\ensuremath{L_{\infty}}}
\newcommand{\gf}{\ensuremath{\mathbb{K}}}
\newcommand{\Mor}{\ensuremath{\mathcal{M}}}
\newcommand{\Res}{\ensuremath{\mathcal{R}}}
\newcommand{\NCBVGauss}[1]{\ensuremath{\mathrm{P}^{{\rm nc}}_{\hbar=1}\left[#1\right]}}
\newcommand{\ComBVGauss}[1]{\ensuremath{\mathrm{P}_{\hbar=1}\left[#1\right]}}
\newcommand{\NCHam}[1]{\ensuremath{\widehat{\mathrm{H}}\left[#1\right]}}
\newcommand{\NCHampoly}[1]{\ensuremath{\mathrm{H}\left[#1\right]}}
\newcommand{\NCHamP}[1]{\ensuremath{\widehat{\mathrm{H}}_+\left[#1\right]}}
\newcommand{\NCHamPpoly}[1]{\ensuremath{\mathrm{H}_+\left[#1\right]}}
\newcommand{\ComBV}[1]{\ensuremath{\widehat{\mathrm{P}}_{\hbar}\left[#1\right]}}
\newcommand{\ComBVpoly}[1]{\ensuremath{\mathrm{P}_\hbar\left[#1\right]}}
\newcommand{\NCBVpoly}[1]{\ensuremath{\mathrm{P}^{{\rm nc}}_\hbar\left[#1\right]}}
\newcommand{\NCBV}[1]{\ensuremath{\mathrm{P}^{\wedge{\rm nc}}_\hbar\left[#1\right]}}
\newcommand{\Pois}[1]{\ensuremath{\widehat{\mathrm{P}}\left[#1\right]}}
\newcommand{\Poispoly}[1]{\ensuremath{\mathrm{P}\left[#1\right]}}
\newcommand{\NonComBV}[1]{\ensuremath{\widehat{\mathrm{P}}^{{\rm nc}}_{\gamma,\nu}\left[#1\right]}}
\newcommand{\CycHoch}[1]{\ensuremath{CC\left(#1\right)}}
\newcommand{\ChEilp}[1]{\ensuremath{CE_{+}\left(#1\right)}}
\newcommand{\MCset}[1]{\ensuremath{\mathcal{MC}\left(#1\right)}}
\newcommand{\MCfiberset}[2]{\ensuremath{\mathcal{MC}|_{#2}\left(#1\right)}}
\newcommand{\MCfibermod}[2]{\ensuremath{\widetilde{\mathcal{MC}}|_{#2}\left(#1\right)}}
\newcommand{\Morph}[2]{\ensuremath{\mathsf{Mor}\left(#1,#2\right)}}
\newcommand{\MorphH}[2]{\ensuremath{\widetilde{\mathsf{Mor}}\left(#1,#2\right)}}
\newcommand{\Morphset}[3]{\ensuremath{\mathsf{Mor}_{#1}\left(#2,#3\right)}}
\newcommand{\Morphmod}[3]{\ensuremath{\widetilde{\mathsf{Mor}}_{#1}\left(#2,#3\right)}}
\newcommand{\Morphfiberset}[4]{\ensuremath{\mathsf{Mor}^{|#4}_{#1}\left(#2,#3\right)}}
\newcommand{\Morphfibermod}[4]{\ensuremath{\widetilde{\mathsf{Mor}}^{|#4}_{#1}\left(#2,#3\right)}}
\newcommand{\Obs}[1]{\ensuremath{\mathrm{Obs}\left(#1\right)}}
\newcommand{\gl}[2]{\ensuremath{\mathfrak{gl}_{#1}(#2)}}
\newcommand{\mat}[2]{\ensuremath{\mathrm{M}_{#1}(#2)}}
\newcommand{\her}[1]{\ensuremath{\mathfrak{h}_{#1}}}
\newcommand{\symg}[1]{\ensuremath{\mathbb{S}_{#1}}}
\newcommand{\cycg}[1]{\ensuremath{\mathbb{Z}/{#1}\mathbb{Z}}}
\newcommand{\twodim}{\ensuremath{\mathcal{A}}}
\newcommand{\innprod}{\ensuremath{\langle -,- \rangle}}
\newcommand{\invlim}[2]{\ensuremath{\varprojlim_{#1} #2}}
\newcommand{\dilim}[2]{\ensuremath{\varinjlim_{#1} #2}}
\newcommand{\cotimes}{\ensuremath{\widehat{\otimes}}}
\newcommand{\sym}{\ensuremath{S}}
\newcommand{\csym}{\ensuremath{\widehat{S}}}
\newcommand{\symp}{\ensuremath{S_+}}
\newcommand{\csymp}{\ensuremath{\widehat{S}_+}}
\newcommand{\ten}{\ensuremath{T}}
\newcommand{\cten}{\ensuremath{\widehat{T}}}
\newcommand{\ctenp}{\ensuremath{\widehat{T}_+}}
\newcommand{\clieh}{\ensuremath{\mathrm{C}_*}}
\newcommand{\cliec}{\ensuremath{\mathrm{C}^*}}
\newcommand{\hlieh}{\ensuremath{\mathrm{H}_*}}
\def\d{{\mathrm{d}}}
\def\tr{{\mathrm{Tr}}}
\def\lie{\mathrm{Lie}}
\def\xto{\xrightarrow}
\def\End{{\mathrm{End}}}
\def\Hoch{{\mathrm{Hoch}}}
\def\Cyc{{CC}}
\def\RR{\mathbb R}
\def\ZZ{\mathbb Z}
\def\cC{\mathcal C}
\DeclareMathOperator{\id}{id}
\begin{document}

\title{Large $N$ phenomena and quantization of the Loday-Quillen-Tsygan theorem}

\author{Gr\'egory Ginot}
\address{Universit\'e Sorbonne Paris Nord, Laboratoire de G\'eom\'etrie, Analyse et Applications, LAGA, CNRS, UMR 7539, F-93430, Villetaneuse, France}  \email{ginot@math.univ-paris13.fr}
\author{Owen Gwilliam}
\address{Department of Mathematics and Statistics, Lederle Graduate Research Tower 1623D, University of Massachusetts Amherst, 710 N. Pleasant Street, Amherst, MA 01003-9305. USA.} \email{gwilliam@math.umass.edu}
\author{Alastair Hamilton}
\address{Department of Mathematics and Statistics, Texas Tech University, Lubbock, TX 79409-1042. USA.} \email{alastair.hamilton@ttu.edu}
\author{Mahmoud Zeinalian}
\address{Department of Mathematics, Lehman College of CUNY, 250 Bedford Park Blvd W, Bronx, NY 10468. USA.} \email{mahmoud.zeinalian@lehman.cuny.edu}
\keywords{Cyclic cohomology, Lie algebra cohomology, Loday-Quillen-Tsygan Theorem, matrix models and ensembles, quantum master equation and quantization, large $N$ limits and Topological Field Theory.}
\subjclass[2020]{16E40, 18G70, 58D29, 81S10, 81T32, 81T70.}

\begin{abstract}
We offer a new approach to large $N$ limits using the Batalin-Vilkovisky formalism, both commutative and noncommutative,
and we exhibit how the Loday-Quillen-Tsygan Theorem admits BV quantizations in that setting.
Matrix integrals offer a key example: we demonstrate how this formalism leads to a recurrence relation that in principle allows us to compute all multi-point correlation functions. We also explain how the Harer-Zagier relations may be expressed in terms of this noncommutative geometry derived from the BV formalism.

As another application, we consider the problem of quantization in the large $N$ limit and demonstrate how the Loday-Quillen-Tsygan Theorem leads us to a solution in terms of noncommutative geometry. These constructions are relevant to open topological field theories and string field theory, providing a mechanism that relates moduli of categories of branes to moduli of brane gauge theories.
\end{abstract}
\thanks{This work was supported by the Simons Foundation by grant 279462 and by the National Science Foundation by grant 1812049.}

\dedicatory{To Albert Schwarz, with great admiration.}

\maketitle

\section{Introduction}

Many constructions involving $N\times N$ matrices exhibit surprising behavior in the limit as $N$ goes to infinity.
The archetypal example --- of deep and abiding interest to both physics and mathematics --- is the theory of random matrices {\it \`a la} Wigner,
but stable patterns in the large $N$ limit appear in many domains, ranging from $K$-theory to gauge theory.
Our central goal here is to offer a new view on a class of large $N$ phenomena by relating a homological result (the Loday-Quillen-Tsygan theorem) to quantum and probabilistic systems (such as Gaussian random matrices).
The mechanism by which these domains interact is the Batalin-Vilkovisky (BV) formalism,
which is a homological approach to formulating the path integral.
Although we end up constructing a general and abstract relationship, this introduction will focus on a fundamental example first and then on interpreting our results as an example of gauge-string duality.

Before jumping into details, however, a summary of our main ideas may  be helpful;
and we phrase these for two audiences.
From the perspective of physics, this paper shows that the BV formalism offers a novel approach to large $N$ gauge theory because it offers a mechanism for the emergence of stringy phenomena,
essentially by string field theory and the noncommutative BV formalism.
(We only examine zero-dimensional field theories in this paper; Movshev-Schwarz and Costello-Li offer higher-dimensional examples.)
From the perspective of homological algebra, this paper shows that the Loday-Quillen-Tsygan (LQT) theorem admits a natural quantization when applied to {\em cyclic} $\ai$-algebras.

In addition to studying BV quantizations of the LQT theorem,
we also analyze how the LQT map intertwines the deformation theory of a cyclic $\ai$-algebra $A$ with the deformation theory of the cyclic $\li$-algebras $\gl{N}{A}$.
From a physical viewpoint, this amounts to understanding how the moduli of noncommutative BV theories (a.k.a. topological string theories) controls the moduli of associated commutative BV theories (a.k.a. brane gauge theories).

There is enormous prior work on all these topics,
and it would be difficult (and possibly unhelpful) to survey a substantive fraction of the literature.
So far as we know, Movshev and Schwarz were the first to intertwine the BV formalism with cyclic homology in the style advocated by this paper.
See \cite{MS4} for a good starting point and survey of their beautiful and eye-opening body of work on supersymmetric gauge theories \cite{MS1, MS2, MS3}.
Schwarz also influenced us via \cite{SchACS}, where a relationship between open topological field theories and generalized Chern-Simons theories is explained.
More recently in \cite{CL1, CL2}, Costello and Li have used related ideas in examining how BCOV theory (a.k.a. Kodaira-Spencer gravity) relates to holomorphic Chern-Simons theory in the large $N$ limit.
We took inspiration from both of these predecessors, but our focus and context are rather different; closer in spirit (but exhibiting nontrivial differences) to Barannikov's lovely work \cite{Bar}.
Eventually we hope our methods may be fruitful in field-theoretic contexts as well.
Another approach to closely related phenomena, with an emphasis on moduli of surfaces, is developed by Cielebak, Fukaya, and Latschev, via $IBL_\infty$-algebras \cite{CFL};
we intend to give a comparison with our work in the future.
We also expect a close connection between our results and those of Berest, Ramadoss, and collaborators (see \cite{BeFeRa, BeRa} to start) but do not explore that here.

Note that in \cite{Iseppi, IsVS} there is a rather different approach to intertwining BV formalism, matrix models, and Connesian noncommutative geometry by Iseppi and van Suijlekom.

Throughout the paper, we have used the odd/even terminology to indicate $\mathbb{Z}/2$-graded objects.
In several discussions, this $\mathbb{Z}/2$-grading is a remnant of a stronger $\mathbb Z$-grading.
While a $\mathbb{Z}$-grading might be richer,
we have suppressed such subtleties in favor of continuity and ease.
Experienced readers desiring a $\mathbb{Z}$-grading will know how to provide the necessary details.

\subsection{What is the LQT theorem and what would it mean to quantize it?}

Let $\gf$ be a characteristic zero field and $A$ a unital associative algebra over $\gf$.
In fact, we can take $A$ to be a differential graded (dg) associative algebra or an $\ai$-algebra.
Then there is a sequence of maps
\[
A = \gl{1}{A} \hookrightarrow \gl{2}{A} \hookrightarrow \cdots \hookrightarrow \gl{N}{A} \hookrightarrow \cdots
\]
where each map extends an $N\times N$ matrix to an $(N+1)\times (N+1)$ matrix by putting zeros in the rightmost column and bottom row.
These are maps of Lie algebras (or if $A$ is dg, of dg Lie algebras; or if $A$ is $\ai$, of $\li$-algebras).
Let $\gl{\infty}{A}$ denote the Lie (respectively, dg Lie or $\li$) algebra of countably infinite square matrices with only finitely many nonzero entries, so that
\[
\gl{\infty}{A} = \dilim{N}{\gl{N}{A}},
\]
the colimit of this diagram.

Taking Lie algebra homology, one finds a sequence of maps of graded vector spaces
\[
\hlieh(\gl{1}{A}) \to \cdots \to \hlieh(\gl{N}{A}) \to \cdots \to \hlieh(\gl{\infty}{A})
\]
arising from a sequence of chain maps
\[
\clieh(\gl{1}{A}) \to \cdots \to \clieh(\gl{N}{A}) \to \cdots \to \clieh(\gl{\infty}{A}).
\]
What was found by Tsygan and (independently) Loday and Quillen, is that there is a simpler chain complex defined directly in terms of the algebra $A$ that encodes Lie algebra homology after stabilization.

\begin{theorem}
For any unital $\ai$-algebra $A$ over a field $\gf$ of characteristic zero, there is a natural quasi-isomorphism
\[
\clieh(\gl{\infty}{A}) \to \sym(\Cyc_*(A)[1])
\]
where $\Cyc_*(A)$ denotes the cyclic chain complex of $A$ and $\sym(-)$ denotes the free graded-commutative algebra.
This construction is functorial in~$A$.
\end{theorem}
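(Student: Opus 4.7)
The plan is to build the LQT map explicitly via generalized traces and then to prove it is a quasi-isomorphism through a Milnor--Moore style argument that reduces the claim to an identification of primitives in the stable Lie algebra homology $\hlieh(\gl{\infty}{A})$.

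First I construct the map. Writing $\clieh(L) = \sym(L[1])$ for the Chevalley--Eilenberg complex and $\gl{N}{A} = \mat{N}{\gf} \otimes A$, an element of $\sym^n(\gl{N}{A}[1])$ is represented by a symmetrized tensor $(E_{i_1 j_1} \otimes a_1) \odot \cdots \odot (E_{i_n j_n} \otimes a_n)$. For each $\sigma \in \symg{n}$ with cycle decomposition $\sigma = c_1 \cdots c_k$, define a generalized trace
\[
\mathrm{tr}_\sigma : \mat{N}{\gf}^{\otimes n} \otimes A^{\otimes n} \longrightarrow \sym^k(\Cyc_*(A)[1])
\]
by closing each cycle of matrix units via the trace and sending the resulting cyclic word in $A$ to its class in $\Cyc_*(A)$. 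Summing over $\sigma$ with appropriate Koszul signs and stabilizing in $N$ yields the candidate map $\phi : \clieh(\gl{\infty}{A}) \to \sym(\Cyc_*(A)[1])$. That $\phi$ is a chain map is a direct calculation from $[E_{ij} \otimes a,\, E_{kl} \otimes b] = \delta_{jk} E_{il} \otimes ab - \delta_{li} E_{kj} \otimes ba$ combined with the cyclic invariance of the trace; in the $\ai$ setting one extends this to compatibility with the higher $\li$-brackets and the Connes differential, a check dictated by the combinatorics of planar trees with cyclically ordered leaves.

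Next I would prove $\phi$ is a quasi-isomorphism by a primitives argument. The block-sum map $\gl{\infty}{A} \oplus \gl{\infty}{A} \to \gl{\infty}{A}$ together with the diagonal endow $\hlieh(\gl{\infty}{A})$ with the structure of a graded connected cocommutative Hopf algebra, which in characteristic zero the Milnor--Moore theorem identifies with the symmetric algebra on its primitives. The target is manifestly primitively generated with primitives $\Cyc_*(A)[1]$, and $\phi$ sends primitives to primitives via the classical trace $\gl{\infty}{A} \to A/[A,A]$, so it suffices to prove that the induced map on primitives is a quasi-isomorphism. This reduces to the fundamental theorem of invariant theory for $GL_N$: in the stable range $N \geq n$, the $\symg{n}$-coinvariants of $\mat{N}{\gf}^{\otimes n}$ are spanned by cyclic trace functionals, with relations generated exactly by cyclic permutation, so the degree $n$ piece of $\sym(\Cyc_*(A)[1])$ is recovered on the nose.

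The main obstacle is combining this stable invariant theory with the internal differentials in the $\ai$ setting, since the $\li$-differential on $\gl{\infty}{A}$ mixes the tensor degree used by Milnor--Moore. I would resolve this by filtering $\clieh(\gl{\infty}{A})$ by word length so that the associated graded differential comes only from the internal $\ai$-structure of $A$; the classical invariant-theoretic calculation then applies verbatim on the $E^1$-page of the resulting spectral sequence, and convergence is standard since the filtration is bounded in each homological degree. Naturality in $A$ is immediate from the explicit formula for $\phi$, since every ingredient (matrix units, traces, cyclic symmetrization) is manifestly functorial in the coefficient algebra.
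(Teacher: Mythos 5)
Your outline is recognizably the classical Loday--Quillen/Tsygan argument (generalized traces indexed by permutations, Hopf structure on stable homology, reduction to primitives, stable invariant theory), which is essentially all the paper itself offers for this statement: it cites the original sources and notes in Section \ref{sec_LQTthm} that those arguments apply to $\ai$-algebras with no additions. But as written there is a genuine gap at the center of your quasi-isomorphism step. The fundamental theorem of invariant theory describes the $\gl{N}{\gf}$-invariants (equivalently coinvariants) of $\mat{N}{\gf}^{\otimes n}$: for $N\geq n$ these have a basis given by \emph{all} permutation operators $\sigma\in\symg{n}$, and it is the cycle decomposition of $\sigma$, combined with the symmetrization already present in $\clieh$, that produces symmetric products of cyclic words --- not, as you state, ``$\symg{n}$-coinvariants of $\mat{N}{\gf}^{\otimes n}$ spanned by cyclic trace functionals with relations generated by cyclic permutation'' (in the stable range there are no linear relations among the $\sigma$'s; that is exactly why one stabilizes). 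More importantly, invariant theory only identifies the $\gl{N}{\gf}$-coinvariant complex. To conclude anything about $\hlieh(\gl{\infty}{A})$ or its primitives you must first know that passing from $\clieh(\gl{N}{A})$ to its $\gl{N}{\gf}$-(co)invariants is a quasi-isomorphism; this uses Weyl's semisimplicity theorem together with the fact that the action is nullhomotopic (contraction against $\Sigma 1_A\otimes B$), which is where the unitality hypothesis enters --- compare Theorem \ref{thm_invariantqiso}. Your sketch never establishes this step, and without it the invariant-theoretic computation says nothing about the homology you are computing.

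A second soft spot is the Milnor--Moore step: it requires a connected, non-negatively graded cocommutative Hopf algebra, which is unavailable for a general $\mathbb{Z}/2$-graded (or unbounded dg) $\ai$-algebra, and your word-length spectral sequence does not repair this, since on its first page you face the same issue for $H(A)$; also the word-length filtration need not be bounded within a fixed homological degree, so your stated convergence reason is not the right one (exhaustive and bounded-below is what you actually need for chains). The formulation that genuinely extends ``verbatim'' to the dg/$\ai$ setting is the chain-level one: coinvariants compute the homology (the step above), and in the stable range the coinvariant complex is identified, compatibly with the full differentials, with $\sym(\Cyc_*(A)[1])$; from this the quasi-isomorphism, its compatibility with the $\ai$-operations, and naturality in $A$ all follow without any appeal to homology-level Hopf algebras or primitives. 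Reorganizing your proof around those two steps would close the gap.
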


We will review this construction in our paper (including the definition of cyclic homology), but it boils down to clever and concrete invariant theory calculations with matrices.
The graded linear dual map
\[
\csym(\Cyc^*(A)[-1]) \to \cliec(\gl{\infty}{A})
\]
relates (the completed symmetric algebra on) cyclic cohomology to the stable Lie algebra cohomology.
For us, much of the interest is in the induced maps
\[
\csym(\Cyc^*(A)[-1]) \to \cliec(\gl{N}{A})
\]
relating the cyclic cohomology to Lie algebra cohomology at some finite~$N$.

The reader should (rightly!) wonder how this theorem could be quantized and what that could mean.
For the moment, we remark that when doing classical field theory in the BV formalism,
the classical observables form a {\em dg commutative algebra} that often admits a nice description as $\cliec(\mathfrak{g})$  for some $\li$-algebra $\mathfrak{g}$.
A particularly pertinent example is Chern-Simons theory on an oriented 3-manifold $X$ for the Lie algebra $\mathfrak{u}_N$.
In that case, working with connections on the trivial principal $U_N$-bundle, we have $\mathfrak{g} = \Omega^*(X) \otimes \mathfrak{u}_N$,
the algebra of $\mathfrak{u}_N$-valued differential forms.
Of course, complexifying $\mathfrak{u}_N$ gives $\gl{N}{\mathbb{C}}$, which places us in a domain where the LQT theorem is relevant.
In particular,
\[
\cliec(\mathfrak{g})\otimes\mathbb{C} = \cliec\left(\gl{N}{\Omega^*_{\mathbb{C}}(X)}\right),
\]
where on the right-hand side we consider complex valued differential forms $\Omega^*_{\mathbb{C}}(X)$ and work over the ground field $\mathbb{C}$. In this way, complex-valued observables occur through complexifying the Lie algebra.
The LQT theorem then says that the large $N$ limit of classical Chern-Simons theory is encoded by $\csym(\Cyc^*(\Omega^*_{\mathbb{C}}(X))[-1])$.
Since the cyclic cohomology of the de Rham complex encodes the rotation-equivariant homology of the free loop space $LX$ (up to subtleties about the fundamental group),
the LQT theorem implies that the large $N$ limit of classical Chern-Simons theory on $X$ describes something about the free loop space $LX$, which is manifestly relevant to string theory and string topology.
(In the remarkable \cite{SchACS}, Schwarz lays out this relationship.)

It is then interesting to ask whether there is a \emph{quantum} version,
where quantization is also done within the BV formalism.
At the classical level, the observables are not only differential graded but actually form a {\em shifted Poisson dg algebra}.
In the case of Chern-Simons theory, the shifted Poisson bracket arises from the bilinear pairing on fields $\mathfrak{g}$ by combining the wedge product of forms with the trace pairing on matrices.
BV quantization amounts to deforming the differential on the classical observables in a way that depends on the shifted Poisson bracket on those classical observables.
In this sense it resembles deformation quantization, except that a differential is deformed rather than a multiplication.
Below in Section~\ref{sec: gauge-string} we revisit these ideas in a broader context, using branes and functorial field theory.

More abstractly, the classical BV formalism can be seen as the study of cyclic $\li$-algebras,
i.e. $\li$-algebras with an invariant pairing of odd degree.
(We give a precise and complete definition of these inside the paper.)
In this view, the underlying graded vector space of the $\li$-algebra encodes the field content and the brackets encode the homogeneous terms of the action functional.
The Chevalley-Eilenberg cochains of the $\li$-algebra are the classical observables,
and the cyclic structure determines the (typically) shifted Poisson bracket.

On the other hand, given a cyclic $\ai$-algebra $A$, one obtains an infinite family of $\li$-algebras $\gl{N}{A}$,
with a canonical invariant pairing that combines the trace pairing on matrices with the pairing on $A$.
Hence, cyclic $\ai$-algebras provide a source of many examples for the classical BV formalism.
In this paper we refine the LQT theorem by showing that for a \emph{cyclic} $\ai$-algebra $A$,
the LQT maps
\[ \csym(\Cyc^*(A)[-1]) \to \cliec(\gl{N}{A}) \]
are shifted Poisson and that they admit a natural BV quantization.
For gauge-type theories, these provide BV quantizations that are ``uniform in~$N$.''

We wish to note something striking about the LQT map: the domain describes a {\em free} theory in the sense that the differential is linear (and hence corresponds to a purely quadratic action functional).
In other words, there is a free theory governing the gauge theories in the large $N$ limit.
This feature explains the remarkable simplicities that emerge in this limit.
On the other hand, the quantization we identify --- which corresponds to the standard BV quantization on the Lie or gauge-theoretic side --- does not look like the ``obvious'' quantization of this free theory.
Instead the BV Laplacian (i.e., the deformation of the differential) has a fascinating structure,
related to ribbons.

Let us point out a drawback of our methods:
they do not apply directly to infinite-dimensional algebras, such as de Rham complexes, because we require strict non-degeneracy of the cyclic structure.
True gauge theories, in all their glory, lie outside the scope of this paper and would require a renormalized version of our methods.
Nonetheless, interesting examples exist, as we explain below when discussing branes.

\subsection{The LQT Theorem and the Gaussian unitary ensemble}

Let $\her{N}$ denote the vector space of Hermitian $N\times N$ matrices.
Equip it with the Gaussian measure
\[ \mu_N := \frac{1}{Z_N}e^{-\frac{1}{2}\tr(X^2)}\d X, \]
normalized so that
\[
\frac{1}{Z_N}\int_{\her{N}} e^{-\frac{1}{2}\tr(X^2)}\d X=1.
\]
In other words, we have identified a space of {\it random matrices};
in particular, physicists will recognize that it is a space of {\it random Hamiltonians} for quantum mechanical systems with finite-dimensional Hilbert spaces (without any time dependence).
A wonderful surprise, discovered by Wigner, is the emergence of simple patterns as $N$ gets large.
One goal of this paper is to offer an explanation for some of these patterns via the Loday-Quillen-Tsygan theorem.

As the measure is naturally invariant under conjugation by unitary transformations,
it is natural to ask first about invariant functions of these random matrices. Define,
\[ I^N_k := \int_{\her{N}} \tr(X^k) \d \mu_N \]
to be the {\em expected value of the single-trace operator} $\tr(X^k)$,
which is a conjugation-invariant moment of the Gaussian measure $\mu_N$.

Recall the case $N=1$, which is simply the Gaussian measure on the real line.
The odd moments vanish as the measure is an even function and an odd power is an odd function.
Thus, only the even moments are non trivial.
A classic computation shows,
\begin{align*}
I^1_{2k} &=\int_\RR x^{2k} e^{-\frac{1}{2} x^2}\d x=\frac{(2k)!}{2^k k!},\\
&=(2k-1)!! := (2k-1)(2k-3)(2k-5) \cdots 5 \cdot 3 \cdot 1;
\end{align*}
a result sometimes known as Wick's lemma.
These moments satisfy a simple recurrence relation,
\[ I^1_{2k}=(2k-1)I^1_{2k-2}. \]

For general $N$, Harer and Zagier obtained a remarkable closed formula for~$I^N_{2k}$.

\begin{theorem}[Harer-Zagier]
\begin{equation} \label{eqn_HZformula}
I^N_{2k} = \frac{(2k)!}{2^k k!} \sum\limits_{m=0}^k 2^m {k \choose m} {N \choose m+1}
\end{equation}
\end{theorem}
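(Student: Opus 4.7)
The plan is to reduce to a combinatorial count of ribbon graphs via Wick's lemma, and then identify the resulting generating series with a rational expression whose coefficients produce the binomial sum in \eqref{eqn_HZformula}. To begin, I would expand
$$\tr(X^{2k}) = \sum_{i_1, \ldots, i_{2k}=1}^{N} X_{i_1 i_2} X_{i_2 i_3} \cdots X_{i_{2k} i_1}$$
and apply Wick's lemma to the GUE integral over $\her{N}$. The covariance $\mathbb{E}[X_{ab} X_{cd}] = \delta_{ad} \delta_{bc}$ ensures that each fixed-point-free involution $\pi$ of $\{1, \ldots, 2k\}$ contributes $N^{F(\pi)}$, where $F(\pi)$ is the number of cycles of $\pi \cdot \sigma$ for $\sigma = (1\,2\,\cdots\,2k)$ the cyclic shift. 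Geometrically, $\pi$ encodes a gluing of the $2k$ sides of a polygon in pairs, producing an oriented ribbon graph with one face (the polygon), $k$ edges, and $F(\pi)$ boundary components; Euler's formula then gives $F(\pi) = k+1-2g(\pi)$, with $g(\pi)$ the genus of the resulting surface. Setting $\varepsilon_g(k)$ to be the number of pairings that produce genus $g$, we obtain
$$I^N_{2k} = \sum_{g \geq 0} \varepsilon_g(k)\, N^{k+1-2g}.$$

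The right-hand side of \eqref{eqn_HZformula} is manifestly a polynomial in $N$ of degree $k+1$ (since $\binom{N}{m+1}$ is a degree $m+1$ polynomial in $N$), so it suffices to prove the identity for all positive integers $N$. To do so, I would derive Harer and Zagier's three-term recurrence for $\varepsilon_g(k)$ relating it to $\varepsilon_g(k-1)$ and $\varepsilon_{g-1}(k-2)$, obtained via a surgery argument on fatgraphs: one removes a distinguished chord of the pairing and tracks how the genus and boundary count change under the resulting cut-and-reglue. Summing over $g$ yields a corresponding recurrence for $I^N_{2k}$. Assembled into the generating series
$$F(N, x) := \sum_{k \geq 0} \frac{I^N_{2k}}{(2k-1)!!}\, x^k,$$
this recurrence becomes a first-order linear ODE in $x$ whose solution is the rational function $\tfrac{1}{2x}\bigl[\bigl(\tfrac{1+x}{1-x}\bigr)^{N} - 1\bigr]$. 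Expanding via the binomial theorem and extracting the coefficient of $x^k$ then produces precisely $\sum_{m=0}^{k} 2^m \binom{k}{m}\binom{N}{m+1}$, as required.

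The main obstacle is the surgery step that produces the recurrence: one must carefully enumerate how cutting a chord of the fatgraph affects the genus and boundary count, and collect the resulting terms with the correct explicit coefficients. Once the recurrence is in hand, the identification with the rational generating function and the binomial expansion are routine manipulations. A more streamlined alternative, which sidesteps the combinatorial surgery, uses orthogonal polynomials: the GUE spectral density is the Christoffel--Darboux kernel of the Hermite polynomials evaluated on the diagonal, and the three-term recurrence for Hermite polynomials allows one to compute $I^N_{2k} = \int_{\RR} x^{2k} \rho_N(x)\, dx$ in closed form, from which \eqref{eqn_HZformula} can be extracted directly.
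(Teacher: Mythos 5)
Your opening and closing steps are sound: the Wick expansion of $\tr(X^{2k})$ over $\her{N}$ does give the genus expansion $I^N_{2k}=\sum_g \varepsilon_g(k)N^{k+1-2g}$, and I checked that the three-term recurrence \eqref{eqn_HZrec}, rewritten for $c_k:=I^N_{2k}/(2k-1)!!$ as $(k+1)c_k=2Nc_{k-1}+(k-1)c_{k-2}$, does integrate to the rational generating function $\tfrac{1}{2x}\bigl[\bigl(\tfrac{1+x}{1-x}\bigr)^{N}-1\bigr]$, whose coefficient extraction yields exactly the binomial sum in \eqref{eqn_HZformula}. The genuine gap is the middle step, which you assert rather than prove: that the recurrence for $\varepsilon_g(k)$ follows from ``a surgery argument on fatgraphs'' in which one removes a distinguished chord and tracks genus and boundary. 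No such straightforward chord-removal argument is known to produce this recurrence; deleting a chord of a one-face gluing changes the face and boundary structure in ways that do not organize into a clean three-term relation, and the coefficients $(k+1)$, $(4k-2)$, $(k-1)(2k-1)(2k-3)$ are not visibly the sizes of fibers of any elementary cut-and-reglue operation. Historically the recurrence was \emph{deduced from} the closed formula (equivalently from the generating function), and the direct combinatorial proofs that exist (Chapuy-type bijections for unicellular maps) are substantially more elaborate than what you sketch; so as written your main route is circular in spirit or rests on an unproved key lemma, and since everything downstream hinges on that recurrence, the argument is incomplete.

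For comparison, the paper does not prove this theorem at all: it quotes \eqref{eqn_HZformula} as the classical Harer--Zagier result and remarks that the recurrence \eqref{eqn_HZrec} can be verified by (nontrivial) analysis with Hermite polynomials, citing Ledoux, after which the closed formula follows (e.g.\ by induction, since both sides are polynomials in $N$ determined by the recurrence and the initial values $I^N_0=N$, $I^N_2=N^2$). Your final remark about orthogonal polynomials is essentially this route --- expand $\rho_N$ via the Christoffel--Darboux kernel for Hermite polynomials and compute the moments --- and it is the most direct way to make your argument complete; but in your proposal it appears only as a one-sentence alternative, not as a worked proof. Either carry out that Hermite-polynomial computation, or replace the surgery claim with an actual proof of the recurrence; as it stands the central step is missing.
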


Some patterns emerge upon careful inspection.
For instance, the leading coefficient of the polynomial $I^N_{2k}$ is the integer $\frac{(2k)!}{k!(k+1)!}$, known as the $k$th Catalan number, which counts the number of possible ways of gluing the edges of a $2k$-gon to form a sphere.
Similarly, the coefficient of the next nonzero term (which has degree $k-1$) is the number of ways that gluing the edges of a $2k$-gon results in a torus.

There is also a useful recurrence relation for $k\geq 2$,
\begin{equation} \label{eqn_HZrec}
I^N_{2k}=\frac{(4k-2)}{(k+1)}NI^N_{2k-2}+\frac{(k-1)(2k-1)(2k-3)}{(k+1)}I^N_{2k-4},
\end{equation}
fully determined by the initial values $I^N_0=N$ and~$I^N_2=N^2$.
One can verify this recurrence relation by elementary but nontrivial analysis using Hermite polynomials and their properties (see, e.g., \cite{Ledoux}),
and thence deduce the closed formula~\eqref{eqn_HZformula}.

A well-known consequence of the general formula \eqref{eqn_HZformula} is Wigner's celebrated semi-circle law,
which we formulate in a limited but simple form.

\begin{theorem}[Wigner]
For any polynomial $f \in \RR[x]$,
\[
\lim_{N \to \infty} \frac{1}{N}\int_{\her{N}} \tr \,\left(f\left( X/\sqrt{N}\right)\right) \d\mu_N = \frac{1}{2\pi}\int_{-2}^2 f(x)\sqrt{4-x^2}\d x.
\]
\end{theorem}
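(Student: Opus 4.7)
The plan is to reduce to monomial test functions, then match the two sides by showing each reproduces the Catalan numbers. By linearity of the integrals on both sides of the identity, it suffices to verify the formula for each monomial $f(x)=x^k$. When $k$ is odd, the Gaussian measure $\mu_N$ is invariant under $X\mapsto -X$, so $\tr((X/\sqrt{N})^k)$ integrates to zero; on the right-hand side, the semi-circle density $\sqrt{4-x^2}$ is even on $[-2,2]$, so the integral of an odd power vanishes. Both sides are therefore zero, and only the even monomials $f(x)=x^{2k}$ need to be treated.

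For the even case, I would express the left-hand side as $I^N_{2k}/N^{k+1}$ and analyze its $N\to\infty$ asymptotics using the Harer-Zagier formula~\eqref{eqn_HZformula}. Since $\binom{N}{m+1}$ is a polynomial in $N$ of degree $m+1$ with leading coefficient $1/(m+1)!$, the dominant contribution to
\[
\frac{I^N_{2k}}{N^{k+1}} = \frac{(2k)!}{2^k k!\, N^{k+1}}\sum_{m=0}^k 2^m\binom{k}{m}\binom{N}{m+1}
\]
comes from $m=k$, and all other terms are $O(1/N)$. Reading off the leading coefficient yields
\[
\lim_{N\to\infty}\frac{I^N_{2k}}{N^{k+1}} = \frac{(2k)!}{2^k k!}\cdot 2^k\cdot\frac{1}{(k+1)!} = \frac{(2k)!}{k!(k+1)!} = C_k,
\]
the $k$th Catalan number. (A variant route, if one prefers to avoid the closed formula, is to divide the Harer-Zagier recurrence~\eqref{eqn_HZrec} through by $N^{k+1}$; the second term becomes $O(1/N^2)$ and one obtains in the limit the standard Catalan recursion $m_k = \frac{2(2k-1)}{k+1}m_{k-1}$, with $m_0=1$.)

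It remains to show that the right-hand side also equals $C_k$ for $f(x)=x^{2k}$, i.e. that the even moments of the semi-circle density are the Catalan numbers. I would compute this by the substitution $x=2\sin\theta$, which transforms the integral into
\[
\frac{1}{2\pi}\int_{-2}^2 x^{2k}\sqrt{4-x^2}\,\d x = \frac{4^{k+1}}{2\pi}\int_{-\pi/2}^{\pi/2}\sin^{2k}\theta\,\cos^2\theta\,\d\theta,
\]
and then evaluate by the standard Wallis-type beta integral (or by the identity $\cos^2\theta = 1-\sin^2\theta$ together with induction), arriving again at $C_k$. Equality of the two sides on every monomial then completes the proof.

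The main obstacle is essentially bookkeeping: verifying the asymptotic extraction of the leading Harer-Zagier term and matching it to the Catalan evaluation of the semi-circle moments. Both steps are classical, and once one identifies the Catalan numbers as the common value, no further analytic input is needed, since polynomial test functions and symmetry handle the reduction cleanly.
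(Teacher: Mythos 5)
Your proposal is correct and follows essentially the same route as the paper: reduce to monomials, extract the leading Catalan coefficient $C_k = \frac{(2k)!}{k!(k+1)!}$ of $I^N_{2k}$ from the Harer--Zagier formula \eqref{eqn_HZformula} (or recurrence \eqref{eqn_HZrec}), and match it with a direct computation of the even semi-circle moments. You simply supply the details (odd-monomial symmetry, the $N^{k+1}$ normalization, the Wallis/beta evaluation) that the paper's proof sketch leaves implicit.
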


In other words, the expected value on matrices --- \emph{as we deal with arbitrarily large matrices} --- is given by integrating against the height of a circle with radius~2.

\begin{proof}[Proof of Wigner's law as a consequence of Harer-Zagier recurrence]
It suffices to prove the theorem for each $f(x)=x^{2n}$.
The right hand side is a direct computation, and it recovers precisely the Catalan number that was found for the left hand side.
\end{proof}

Our paper provides a {\em homological} approach to studying Gaussian random matrices,
particularly how to compute the expected values of conjugation-invariant observables.
When we develop this example, we will see how the large $N$ limit of our approach encodes the Harer-Zagier result and hence Wigner's law.

The first step is to rephrase the case of finite $N$ homologically.
There is a standard way to do this in the BV formalism \cite{GJF},
but we sketch the argument here.
A traditional method for encoding integration homologically is the de Rham complex.
For a Gaussian measure $\mu = e^{-Q} \d V$ on a real vector space $V$,
one can show that inside $\Omega^*_{\mathbb{C}}(V)$, there is a subcomplex
\[
\Omega^*_{\mathbb{C},\mu}(V) = \{ p e^{-Q} \d x_1 \wedge \cdots \wedge \d x_k \,:\, p \text{ polynomial}, x_i \text{ linear} \}
\]
consisting of polynomial de Rham forms multiplied by $e^{-Q}$.
Since polynomials have finite moments when integrated against a Gaussian measure,
this subcomplex has one-dimensional cohomology concentrated in the top degree.
In particular, one can take $V$ to be the Hermitian $N\times N$ matrices with measure~$\mu_N$.

We now state a key result which may be derived directly from the machinery of the BV formalism: \emph{a BV quantization of $\cliec(\gl{N}{\twodim})$, where $\twodim$ is described below, encodes Gaussian integration over Hermitian matrices}.
An explicit description appears in Section~\ref{sec_random}, but we sketch it here.

Let $\twodim$ denote the cochain complex $\mathbb{C} \xto{id} \mathbb{C}$ concentrated in degrees 1 and 2,
viewed as a dg associative algebra with \emph{trivial} multiplication.
Then for every natural number $N$, there is an isomorphism of cochain complexes
\[
\Omega^*_{\mathbb{C},\mu_N}(\her{N})[-N^2] \simeq (\sym(\gl{N}{\twodim}^*[-1]), \d_{\lie} + \Delta_N),
\]
where $\d_{\lie}$ denotes the differential on Chevalley-Eilenberg cochains $\cliec(\gl{N}{\twodim})$ and $\Delta_N$ denotes a deformation of the differential.
There is a lot to unwind in that statement,
but the important point is that this polynomial de Rham complex is a deformation of a Lie algebra cochain complex after shifting the de Rham complex into degrees $[-N^2,0]$.

These isomorphisms suggest that this simple-minded algebra $\twodim$ might play a universal role in studying the Gaussian Unitary Ensemble (GUE).
Let us plug it into the LQT theorem and see what happens.

Thankfully, the cyclic cochain complex of $\twodim$ is easy to describe.
One can identify the degree 0 component of $\Cyc^*(\twodim)[-1]$ with the maximal ideal $\langle x \rangle$ inside the polynomial ring $\mathbb{C}[x]$.
In degree 0 the Lie algebra cochains are $\sym(\gl{N}{\mathbb{C}}^*)$.
The LQT map is also easy to describe in degree 0, and it is
\[
\begin{array}{ccc}
\langle x \rangle & \to & \sym(\gl{N}{\mathbb{C}}^*)\\
x^n & \mapsto & \tr(X^n).
\end{array}
\]
That is, the LQT map produces the single-trace operators.
When we extend our consideration to the entire domain $\sym(\Cyc^*(\twodim)[-1])$ of the LQT map,
it produces the multi-trace operators.
At the classical level, the LQT theorem focuses on the observables relevant to random matrix theory.

Our quantized LQT theorem then produces a BV quantization on the cyclic side that maps to the BV quantization of $\cliec(\gl{N}{\twodim})$ encoding Gaussian integration.
On the cyclic side, the Harer-Zagier recurrence appears as a relation among cocycles,
and the LQT map shows how that determines relationships among the expected values of tracial moments.
In this sense, our BV approach to the large $N$ limit captures Wigner's law.
Because the relationship is very explicit, we can also use this to explore interesting multi-trace recurrences.

We remark that it is natural to consider close variants of this story, where we replace Hermitian matrices by orthogonal or symplectic matrices.
(In probability theory these are known as the GOE and GSE.)
There is an analogous variant of the cyclic cohomology known as {\em dihedral cohomology}.
Loday and Procesi \cite{LodPro} proved an analog of the LQT theorem, relating dihedral cohomology of an involutive algebra $A$ to the large $N$ limit of Lie algebra cohomology for orthogonal or symplectic matrices with values in $A$.
Our techniques should be immediately applicable to this setting, so that we obtain analogs of classic results (e.g., expected values of multitrace operators) for GOE and GSE.

\subsection{A version of gauge-string duality}
\label{sec: gauge-string}

Our results apply naturally to topological string theories,
as encoded via the extended 2-dimensional topological field theories known as open-closed TFTs or as topological conformal field theories (TCFTs).
We will mention physical terminology but focus on mathematical statements.

Recall \cite{CosTCFT, LurieTFT} that a Calabi-Yau $A_\infty$-category $\cC$ determines a TCFT $Z_\cC$,
which we view as encoding a topological string theory.
This category captures the open sector of the topological string theory:
one interprets $\cC$ as the {\em $D$-branes} for the string theory, and
we will suggestively call an object $B$ of $\cC$ a brane.
What the TCFT assigns to the circle $Z_\cC(S^1)$ captures the closed sector of the string theory.
The fundamental theorem of TCFTs says that given $\cC$, the canonical value is $Z_\cC(S^1) \simeq \Hoch_*(\cC)$,
the Hochschild chains of the category.
The state space of the closed string field theory is, moreover, the cyclic chains~$\Cyc_*(\cC)$.
In \cite{CostGW}, Costello shows how the TCFT determines canonically a closed string field theory.
(By that we mean a solution to the quantum master equation in a BV algebra arising from the cyclic chains;
Costello relates it to the Sen-Zwiebach algebra.
For us, a closed string field theory need not be a Lagrangian field theory on some manifold,
such as BCOV theory.)

We now relate that story to ours. First, a cyclic $\ai$-algebra is a Calabi-Yau $\ai$-category with a single object.
Conversely, for any brane $B$ from a Calabi-Yau category $\cC$, the endomorphisms $\End_\cC(B)$ are modeled by a cyclic $\ai$-algebra.
To be more accurate, this $A_\infty$-algebra has a nondegenerate pairing at the level of cohomology,
and if $\End_\cC(B)$ has finite-dimensional cohomology, then one can find a representative $A_\infty$-algebra with a nondegenerate pairing on the cochain-level, by homotopy transfer.
(Often, however, the natural presentation of a brane yields an infinite-dimensional $\ai$-algebra,
an issue that we discuss below in the context of examples.)
In this sense, if one studies the TCFT associated to a single brane, one is in the setting of our results.

Now suppose one takes $N$ coincident copies of the same brane, and let's suppose that can be modeled by an $N$-fold direct sum $B^{\oplus N}$.
Then
\[
\End_\cC(B^{\oplus N}) \cong \gl{N}{\End_\cC(B)},
\]
namely matrices with values in endomorphisms of $B$.
Moreover, we can view $\gl{N}{\End_\cC(B)}$ as a cyclic $\li$-algebra and hence as presenting a classical BV theory.
We interpret it as the {\em brane gauge theory} arising from $N$ coincident copies of the brane~$B$.

\def\cO{\mathcal{O}}
\def\cF{\mathcal{F}}

Let us mention briefly two examples that might orient the reader.
\begin{itemize}
\item
Let $\cC$ denote an $A_\infty$-category enhancing the derived category of coherent sheaves on a projective Calabi--Yau manifold $X$. Given an object $\cF$ (e.g., a coherent sheaf), the derived endomorphisms are $\mathbb{R} \End_{\cO_X}(\cF)$.
(At the level of cohomology, it is the self-exts of $\cF$. One can find a convenient cochain representative by taking the Dolbeault complex of a complex of holomorphic vector bundles resolving $\cF$.)
As $X$ is projective, this $\ai$-algebra has finite-dimensional cohomology, and so after transferring the $\ai$-structure to the cohomology, we have a cyclic $\ai$-algebra, as desired.
As a concrete example, take $\cF = \cO_X$, in which case the Hochschild cochains are modeled by polyvector fields on $X$, and the cyclic cochains admit a model built from polyvector fields.
Pursuing this example leads naturally to the BCOV theory as studied in the BV formalism by Costello and Li \cite{CL1}.
Taking $X$ to be a Calabi--Yau 3-fold, the associated brane gauge theories are holomorphic Chern--Simons theories on~$X$.
\item
Let $\cC$ denote an $A_\infty$-category encoding $\infty$-local systems on a closed oriented smooth manifold $X$.
Given such a local system, its derived endomorphisms form an $\ai$-algebra over the de Rham complex of $X$.
As $X$ is closed, this $\ai$-algebra has finite-dimensional cohomology, and so after transferring the $\ai$-structure to the cohomology, we have a cyclic $\ai$-algebra, as desired.
As a concrete example, take the trivial local system given by the constant sheaf.
Its derived endomorphisms are modeled by the de Rham complex itself.
If $X$ is a 3-manifold, the associated brane gauge theories are precisely the perturbative Chern--Simons theories already discussed. Note that Schwarz described this picture in his remarkable paper~\cite{SchACS}.
\end{itemize}
There are many variants on these examples.

The Loday-Quillen-Tsygan theorem tells us that there is a canonical relationship between the large $N$ limit of the {\em classical} brane gauge theory for $B$ and the state space of the closed string theory determined by the brane $B$, as encoded in the cyclic cohomology of $\End_\cC(B)$.
This map can be seen as relating the moduli of closed string theory to the moduli of the brane gauge theory;
we examine this aspect in detail in Sections \ref{sec_obsthy} and \ref{sec_largeN},
which focus on deformation theory of commutative and noncommutative BV theories.
Alternatively, this map can be seen as relating classical observables for the closed string theory and the brane gauge theories;
our other results can be seen provide a quantization of that relationship.

Note that this story also relates to the TCFT determined by the whole category $\cC$.
A choice of brane $B$ can be viewed as choosing an $\ai$-functor from the single-object category with morphisms $\End_\cC(B)$ into $\cC$.
That determines a cochain map $\Cyc^*(\cC) \to \Cyc^*(\End_\cC(B))$.
It is natural to ask if the closed string field theory arising from the TCFT $Z_\cC$ maps to the quantization we produce for the large $N$ limit of the brane gauge theories.
We hope that our quantization receives a canonical map from that closed string field theory,
and that our quantization is equivalent to that produced by the TCFT arising from~$B$.
It is highly nontrivial to construct these functorial quantizations (cf. \cite{CalCosTu, CalTu}),
so we do not attempt to test these aspirations in the current paper.

We remark that our methods apply to configurations of branes and not just isolated branes.
If one takes a collection of branes, one can consider the subcategory they generate,
which (in good cases) admits a finite set of generators.
Hence this subcategory is Morita equivalent to an $\ai$-algebra,
and so our techniques apply (so long as the cohomology is finite).

\subsection{Layout of the paper}

In Section \ref{sec_homalgebra} we recall the basic homological algebra that provides the underpinnings of our approach to large $N$ phenomena. Here we recall the definitions of cyclic $\ai$ and $\li$-algebras and their cohomology. We also review a number of fundamental results, in the context of infinity-algebras; such as Morita invariance, the Loday-Quillen-Tsygan Theorem and the invariance of Chevalley-Eilenberg cohomology under the action of a semisimple Lie algebra.

Section \ref{sec_BVNonComGeom} reviews the basic framework of the Batalin-Vilkovisky formalism. This includes both the familiar commutative framework and the noncommutative framework that follows the style of Kontsevich. Here we explain how cyclic $\ai$ and $\li$-structures, along with their cohomology, may be encoded within this framework.

In Section \ref{sec_MinvLQTBV} we continue our description of the noncommutative BV formalism. We then proceed to explain the connection between the material of the first two sections. In particular, we describe how the LQT map intertwines the noncommutative BV formalism with its commutative counterpart.

Section \ref{sec_random} discusses the application of the preceding results to random matrix theory. Here we largely limit ourselves to explaining the role of noncommutative geometry and the Loday-Quillen-Tsygan Theorem in describing the $k$-point correlation functions. It is possible to use this framework to describe the large $N$ asymptotic behavior of these correlation functions; but to keep the current paper within reasonable limits, we intend to pursue this subject elsewhere.

In Section \ref{sec_obsthy} we introduce a general framework in which to discuss quantization in the Batalin-Vilkovisky formalism which applies equally well to both the commutative and noncommutative incarnations presented in this paper. We identify, both abstractly and for our examples, the cohomology theories controlling this process. Here we prove an appropriate analogue of the inverse function theorem: a map that induces an isomorphism on cohomology yields a bijection between moduli spaces of quantizations. We apply this theorem to both commutative and noncommutative examples.

Finally, in Section \ref{sec_largeN} we explain how this general framework applies to quantization in the large $N$ limit. Here it is necessary to have a quantization at each rank $N$ of the theory. We explain that, crudely speaking, the Loday-Quillen-Tsygan Theorem tells us that we must build this quantization using the noncommutative BV formalism.

\subsection{Acknowledgements}

We are grateful to a number of collaborators and colleagues who have spoken with us as we pursued these ideas,
notably Ga\"etan Borot, Dmitri Pavlov, and Kevin Costello.
We are grateful for the convivial atmosphere and financial support that MPIM supplied.
Subsequently, OG was lucky to draw AH into collaboration on this topic thanks to support from the Simons Foundation.
Surya Raghavendran and Philsang Yoo gave helpful feedback on a draft of this paper,
which clarified several points.
During work on this project, the National Science Foundation supported OG through DMS Grant No. 1812049.
Any opinions, findings, and conclusions or recommendations expressed in this material are those of the authors and do not necessarily reflect the views of the National Science Foundation.

\subsection{Notation and conventions}

Throughout the paper we work over a field $\gf$ of characteristic zero. We denote the algebra of $N\times N$ matrices with entries in $\gf$ by $\mat{N}{\gf}$.

Our convention will be to work with cohomologically graded objects, hence the suspension $\Sigma V$ of a graded vector space $V$ is defined by $\Sigma V^i := V^{i+1}.$
In fact, for much of the paper, we will generally work just with a $\ZZ/2$-grading (i.e., with super vector spaces),
leaving it to the reader to lift to a $\ZZ$-grading, as interested.
(Our results hold at that level but one must then track the degrees of, say, the odd pairings and give $\hbar$ a cohomological degree.)
When it is informative,
such as in Section~\ref{sec_random},
we will explicitly describe the lift.

A profinite vector space $V$ is an inverse limit
\[ V=\invlim{\alpha\in\mathcal{I}}{V_{\alpha}} \]
of finite-dimensional vector spaces $V_{\alpha}$. Such a presentation induces upon $V$ the inverse limit topology. A morphism of profinite vector spaces is a \emph{continuous} linear map. Since any vector space is the direct limit of its finite-dimensional subspaces, the category of ordinary vector spaces is anti-equivalent to the category of profinite vector spaces under the functor that takes a vector space to its linear dual. Since in this paper it is necessary for us to work with \emph{co}homology, profinite vector spaces and their associated constructions will accordingly make their appearance.

We define the completed tensor product of two profinite vector spaces $U$ and $V$ by
\[ U \cotimes V := \invlim{\alpha,\beta}{U_{\alpha}\otimes V_{\beta}}. \]
This corresponds to the usual tensor product under the above equivalence of categories.

We denote the symmetric group by $\symg{n}$. Given a profinite vector space $V$ we define the \emph{completed} tensor and symmetric algebras by
\[ \cten V:=\prod_{n=0}^{\infty}V^{\cotimes n} \quad\text{and}\quad \csym V:=\prod_{n=0}^{\infty} \big{[}V^{\cotimes n}\big{]}_{\symg{n}}; \]
where we make use of the convention that denotes coinvariants by a subscript and invariants by a superscript. We will also use the notation
\[ \ctenp V:=\prod_{n=1}^{\infty}V^{\cotimes n} \quad\text{and}\quad \csymp V:=\prod_{n=1}^{\infty} \big{[}V^{\cotimes n}\big{]}_{\symg{n}}. \]
We note that the standard symmetric and tensor algebras are defined in the usual way:
\[ \ten V:=\bigoplus_{n=0}^{\infty}V^{\otimes n} \quad\text{and}\quad \sym V:=\bigoplus_{n=0}^{\infty} \big{[}V^{\otimes n}\big{]}_{\symg{n}}. \]

\section{$\ai$-algebras, Morita invariance and the Loday-Quillen-Tsygan Theorem} \label{sec_homalgebra}

In this section we collect some of the basic facts about $\ai$ and $\li$-algebras that will underlie our approach to quantization in the large $N$ limit. Here we formulate, in the context of homotopy algebras, some of the essential results concerning the cohomology of these algebras. This includes the Morita invariance of Hochschild cohomology and the Loday-Quillen-Tsygan Theorem on the $N$-stable cohomology of $\gl{N}{A}$. These results will provide the main tools for our work in the later sections.

\subsection{$\ai$-algebras and cyclic cohomology}

We begin by recalling the definition of an $\ai$-algebra $A$ and its cyclic cohomology.

\begin{defi} \label{def_ainfinity}
An $\ai$-algebra is a vector space $A$ together with a continuous derivation,
\begin{equation} \label{eqn_ainfinityderivation}
m:\cten\Sigma A^*\to\cten\Sigma A^*
\end{equation}
of degree one such that $m^2=0$. Furthermore, this formal vector field should vanish at zero in the sense that its image is contained in $\ctenp\Sigma A^*$.
\end{defi}

Under the identification of $\cten\Sigma A^*$ with the dual $(\ten\Sigma A)^*$ of the tensor algebra on $\Sigma A$, an $\ai$-structure $m$ on a vector space $A$ is dual to a system of structure maps;
\begin{equation} \label{eqn_ainfintystructuremaps}
m_k: A^{\otimes k} \to A, \quad k\geq 1.
\end{equation}
The equation $m^2=0$ encodes the homotopy coherence relations.

\begin{defi} \label{def_ainfunital}
We say that an $\ai$-structure $m$ on a vector space $A$ is \emph{unital} if there is an element $1\in A$ such that:
\begin{itemize}
\item
For all $a\in A$, $m_2(1,a) = a = m_2(a,1)$.
\item
For all $k\neq 2$, the structure map $m_k(a_1,\ldots,a_k)$ vanishes whenever any argument $a_i$ is equal to $1$.
\end{itemize}
\end{defi}

The next step is to explain how matrices inherit an $\ai$-structure. Given an $\ai$-algebra $A$ consider the space,
\begin{equation} \label{eqn_matrixtensorrepresentation}
\mat{N}{A}=A\otimes\mat{N}{\gf}
\end{equation}
of $N\times N$ matrices with entries in $A$. This has the natural structure of an $\ai$-algebra which we now describe.

\begin{defi} \label{def_ainfmatrices}
Given an $\ai$-algebra $A$, the $\ai$-structure on $\mat{N}{A}$ is defined by tensoring the structure maps \eqref{eqn_ainfintystructuremaps} with the maps;
\[ \mat{N}{\gf}^{\otimes k} \to \mat{N}{\gf}, \qquad X_1\otimes X_2\otimes\cdots\otimes X_k \mapsto X_1 X_2\cdots X_k. \]
\end{defi}

One can easily check that this defines a new $\ai$-structure on $\mat{N}{A}$ which generalizes the usual associative algebra structure on matrices that occurs when $A$ is a strictly associative algebra.

Next we recall the definition of the cyclic cohomology of an $\ai$-algebra. This is the cohomology theory that will ultimately emerge in the $N$-stable limit to control the quantization process and the calculation of expectation values within the Batalin-Vilkovisky formalism.

\begin{defi}
Given an $\ai$-algebra $A$, the $\ai$-structure \eqref{eqn_ainfinityderivation} induces a differential on
\[ \CycHoch{A}:= \prod_{k=1}^\infty \big[(\Sigma A^*)^{\cotimes k}\big]_{\cycg{k}}. \]
The cohomology of the complex $\CycHoch{A}$ is called the \emph{cyclic cohomology} of $A$.
\end{defi}

Now we discuss the notion of a \emph{cyclic} $\ai$-algebra, which requires that we have a symmetric nondegenerate bilinear form $\innprod$ on $A$. In particular, $A$ must be finite-dimensional.

\begin{defi} \label{def_ainfinitycyclic}
We say that an $\ai$-structure $m$ on $A$ is \emph{cyclic} if the multilinear maps
\begin{equation} \label{eqn_cyclicainfstructuremaps}
a_0,\ldots,a_k \mapsto \langle m_k(a_0,\ldots,a_{k-1}),a_k \rangle, \quad k\geq 1
\end{equation}
coming from the structure maps \eqref{eqn_ainfintystructuremaps} are cyclically antisymmetric.
\end{defi}

We note that if $A$ is a cyclic $\ai$-algebra then so is $\mat{N}{A}$, where we combine the bilinear form on $A$ with the trace pairing on matrices.

\subsection{$\li$-algebras and Chevalley-Eilenberg cohomology} \label{sec_LinfCE}

Here we recall the definition of an $\li$-algebra and its Chevalley-Eilenberg cohomology.

\begin{defi}
An $\li$-algebra consists of a vector space $\mathfrak{g}$ and a continuous derivation,
\[ l:\csym\Sigma\mathfrak{g}^*\to\csym\Sigma\mathfrak{g}^* \]
of degree one satisfying $l^2=0$. As in Definition \ref{def_ainfinity} it must also vanish at the origin, meaning that its image is contained in $\csymp\Sigma\mathfrak{g}^*$. The cohomology of the complex $\csym\Sigma\mathfrak{g}^*$ with respect to $l$ is called the \emph{Chevalley-Eilenberg cohomology} of $\mathfrak{g}$ (with trivial coefficients). Since it splits off as a summand; we will, by an abuse of terminology, also refer to the subcomplex
\[ \ChEilp{\mathfrak{g}}:=\csymp\Sigma\mathfrak{g}^* \]
as the Chevalley-Eilenberg cochain complex, although it is more properly referred to as the \emph{reduced} Chevalley-Eilenberg cochain complex.
\end{defi}

As before, an $\li$-structure $l$ is dual to a system of antisymmetric structure maps;
\[ l_k:\mathfrak{g}^{\otimes k}\to\mathfrak{g}, \quad k\geq 1. \]
The equation $l^2=0$ imposes the homotopy Jacobi identities.

\begin{defi}
If $\mathfrak{g}$ is endowed with a symmetric nondegenerate bilinear form $\innprod$, then we say that an $\li$-structure $l$ on $\mathfrak{g}$ is cyclic if the multilinear maps
\begin{equation} \label{eqn_cycliclinfstructuremaps}
 y_0,\ldots, y_k \mapsto \langle l_k(y_0,\ldots,y_{k-1}),y_k \rangle, \quad k\geq 1
\end{equation}
are antisymmetric.
\end{defi}

The $\li$-structures that we consider in this paper come from commutator algebras, which we now describe.

\begin{defi} \label{def_commutatoralgebra}
Any continuous derivation on the completed tensor algebra naturally induces such a derivation on the completed symmetric algebra. Given an $\ai$-structure on $A$, we refer to the $\li$-structure induced on $A$ in this way as the \emph{commutator} $\li$-structure. In particular, we will denote by $\gl{N}{A}$ the commutator $\li$-algebra formed by $N\times N$ matrices with entries in $A$ that comes from Definition \ref{def_ainfmatrices}.
\end{defi}

We note that one immediate consequence of the above definition is that there is a map of complexes
\begin{equation} \label{eqn_cyctoCE}
\CycHoch{\mat{N}{A}}\to\ChEilp{\gl{N}{A}}.
\end{equation}

Now we consider the action of $\gl{N}{\gf}$ on the Chevalley-Eilenberg complex. The Lie algebra $\gl{N}{\gf}$ acts on $\gl{N}{A}$ in the standard way on the righthand factor of \eqref{eqn_matrixtensorrepresentation}. On $\csym\Sigma\gl{N}{A}^*$ it acts by derivations of this algebra. This action commutes with the commutator $\li$-structure $l$, since $\gl{N}{\gf}$ acts by derivations on the algebra $\mat{N}{\gf}$. Consequently, the $\gl{N}{\gf}$-invariants form a subcomplex of the Chevalley-Eilenberg complex. In fact, if $A$ is unital then this action is nullhomotopic; given a matrix $B$ in $\gl{N}{\gf}$, we have
\[ B = [h_B,l], \]
where $h_B$ is the derivation on $\csym\Sigma\gl{N}{A}^*$ given by contracting with the matrix
\[ \Sigma 1_A \otimes B\in\Sigma\gl{N}{A}. \]
This follows from the identities of Definition \ref{def_ainfunital}.

We have the following basic fact, which is a consequence of Weyl's Theorem on the representation theory of semisimple Lie algebras, cf. \cite{weibel} Proposition 9.10.3.

\begin{theorem} \label{thm_invariantqiso}
Given a unital $\ai$-algebra $A$, the inclusion of the invariants
\[ \left[\csym\Sigma\gl{N}{A}^*\right]^{\gl{N}{\gf}} \longrightarrow \csym\Sigma\gl{N}{A}^* \]
induces an isomorphism in Chevalley-Eilenberg cohomology.
\end{theorem}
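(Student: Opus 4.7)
The plan is to split the Chevalley-Eilenberg complex into its $\mathfrak{sl}_N$-invariant and non-invariant parts, show that both are subcomplexes, and verify that the non-invariant part is acyclic via the nullhomotopy $B = [h_B, l]$ already exhibited in the excerpt.

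The first step is to produce the decomposition. Since scalar matrices $c\cdot I_N$ act trivially on $\gl{N}{A}$ by the commutator, the $\gl{N}{\gf}$-action factors through the semisimple Lie algebra $\mathfrak{sl}_N$, and the $\gl{N}{\gf}$-invariants coincide with the $\mathfrak{sl}_N$-invariants. Moreover, the $\mathfrak{sl}_N$-action on $\gl{N}{A} = A \otimes \gl{N}{\gf}$ is locally finite, since $A$ is a trivial module and $\gl{N}{\gf}$ is the adjoint representation; so every element of the completed symmetric algebra sits inside a finite-dimensional $\mathfrak{sl}_N$-invariant subspace obtained by polynomial functors applied to finitely many copies of the adjoint. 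Applying Weyl's theorem to these finite-dimensional pieces and assembling, one obtains a canonical $\mathfrak{sl}_N$-equivariant product decomposition
\[
\csym\Sigma\gl{N}{A}^* = X \oplus Y,
\]
where $X = \left[\csym\Sigma\gl{N}{A}^*\right]^{\gl{N}{\gf}}$ and $Y$ is the product of all non-trivial isotypical components. Because the differential $l$ is $\gl{N}{\gf}$-equivariant (as noted in the excerpt, since $\gl{N}{\gf}$ acts by derivations on $\mat{N}{\gf}$), this is a decomposition of cochain complexes.

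The second step is to show that $Y$ is acyclic. The nullhomotopy $B = [h_B, l]$ from the excerpt shows that every $B \in \gl{N}{\gf}$ acts as zero on the cohomology $H^*(\csym\Sigma\gl{N}{A}^*)$, hence on the direct summand $H^*(Y)$. On the other hand, $H^*(Y)$ is an $\mathfrak{sl}_N$-subquotient of $Y$, and subquotients of semisimple modules composed entirely of non-trivial isotypical components are themselves of the same form. Thus $H^*(Y)$ has no trivial $\mathfrak{sl}_N$-summands, while simultaneously $\mathfrak{sl}_N$ acts on it as zero — meaning every vector generates a trivial submodule. These two constraints force $H^*(Y) = 0$, so the inclusion $X \hookrightarrow \csym\Sigma\gl{N}{A}^*$ is a quasi-isomorphism.

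The main obstacle I anticipate is the compatibility of the invariant decomposition with the profinite completion, since Weyl's theorem is naturally a finite-dimensional statement. The resolution is to apply the theorem degree-by-degree along the natural filtration by symmetric power: at each degree $n$, the coinvariants $\bigl[(\Sigma\gl{N}{A}^*)^{\cotimes n}\bigr]_{\symg{n}}$ are a union of finite-dimensional $\mathfrak{sl}_N$-stable subrepresentations (built from tensor products of adjoints), and one then takes the product over $n$ to assemble the global decomposition. Once this technical point is handled, the rest of the argument is essentially formal.
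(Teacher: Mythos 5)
Your argument is correct and is essentially the paper's proof: the paper simply invokes Weyl's theorem (citing Weibel, Proposition 9.10.3) together with the nullhomotopy $B=[h_B,l]$ it sets up just before the statement, and your isotypic-decomposition-plus-nullhomotopy argument is exactly the standard proof behind that citation. The only imprecision is the claim that every element of the completed symmetric algebra lies in a finite-dimensional $\mathfrak{sl}_N$-submodule, which fails for the profinite product over all symmetric degrees; the fix is the one you gesture at, namely that the complex is an inverse limit of finite-dimensional $\mathfrak{sl}_N$-modules and the isotypic decomposition, being functorial and exact, passes to the limit (or one argues with the Casimir, which is invertible on the nontrivial part and zero on cohomology).
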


\subsection{Morita Invariance}

Morita invariance is an important property of the Hochschild cohomology of associative algebras. Here we provide a formulation of this result that applies in the context of $\ai$-algebras.

Given an $\ai$-algebra $A$, denote the $\ai$-structures on $A$ and $\mat{N}{A}$ by $m^A$ and $m^{\mat{N}{A}}$ respectively. We define maps of complexes
\[ \Mor:\left(\ctenp \Sigma A^*, m^A\right) \rightleftarrows \left(\ctenp \Sigma \mat{N}{A}^*, m^{\mat{N}{A}}\right):\Res \]
as follows. The map $\Res$ is the restriction map, dual to the map that arises from embedding $A$ into the top left corner of $\mat{N}{A}$.

To define $\Mor$, consider the multilinear maps;
\begin{equation} \label{eqn_traceproduct}
t_k:\mat{N}{\gf}^{\otimes k} \to \gf, \qquad X_1,\ldots, X_k \mapsto \tr (X_1\cdots X_k);
\end{equation}
given by taking the trace of a product of matrices. The map $\Mor$ takes a $k$-multilinear form on $\Sigma A$ to a $k$-multilinear form on $\Sigma\mat{N}{A}=\Sigma A\otimes\mat{N}{\gf}$ by tensoring with the form $t_k$.

Since the multilinear forms \eqref{eqn_traceproduct} are cyclically symmetric, the maps $\Mor$ and $\Res$ descend to the cyclic complexes. We are now ready to provide the statement on the Morita invariance of cyclic cohomology.

\begin{theorem} \label{thm_moritainvariance}
Given a unital $\ai$-algebra $A$, the maps
\[ \Mor:\CycHoch{A} \rightleftarrows \CycHoch{\mat{N}{A}}:\Res \]
are quasi-isomorphisms, mutually inverse on homology, satisfying
\[ \Res\circ\Mor=\id. \]
\end{theorem}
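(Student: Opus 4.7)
The plan is to establish the result in three steps: verifying both maps are chain maps, checking the strict retraction identity, and constructing a chain homotopy $\Mor \circ \Res \simeq \id$.

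The first two steps are largely formal. The map $\Res$ is dual to the embedding $\iota: A \to \mat{N}{A}$, $a \mapsto a \otimes E_{11}$, which is a \emph{strict} $\ai$-morphism since $E_{11}^k = E_{11}$ forces $\iota(m_k^A(a_1,\ldots,a_k)) = m_k^{\mat{N}{A}}(\iota(a_1),\ldots,\iota(a_k))$; dualization and descent to cyclic coinvariants then exhibit $\Res$ as a chain map. For $\Mor$, one exploits the decomposition $m_k^{\mat{N}{A}} = m_k^A \otimes (\text{matrix product})$, together with the cyclic-operadic identity that tensoring a cochain with the trace pairing $t_k$ absorbs internal matrix products; cyclic symmetry of trace handles the descent to the quotient. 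The retraction identity $\Res \circ \Mor = \id$ is then immediate from $\tr(E_{11}^k) = 1$.

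For the chain homotopy $\Mor \circ \Res \simeq \id$ on $\CycHoch{\mat{N}{A}}$, I would mirror the strategy leading to Theorem \ref{thm_invariantqiso}. The conjugation action of $\gl{N}{\gf}$ on $\mat{N}{A}$ lifts to a derivation action on $\CycHoch{\mat{N}{A}}$, and unitality of $A$ produces, for each $B \in \gl{N}{\gf}$, a contracting homotopy $h_B$ obtained by contracting with $\Sigma 1_A \otimes B$---exactly as in Section \ref{sec_LinfCE}---so the action of $B$ equals $[h_B, \d]$. By Weyl's theorem on the representation theory of reductive groups, the inclusion of the $\gl{N}{\gf}$-invariant subcomplex into $\CycHoch{\mat{N}{A}}$ is therefore a quasi-isomorphism. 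Since trace is conjugation-invariant, the image of $\Mor$ sits inside this invariant subcomplex, and the final step is to show cohomologically that $\Mor$ surjects onto the invariants, via the first fundamental theorem of invariant theory for $GL_N$.

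The main obstacle lies in this last identification. The $\gl{N}{\gf}$-invariants in $(\mat{N}{\gf}^*)^{\otimes k}/\cycg{k}$ are spanned by multi-trace classes $T_\sigma$ indexed by permutations $\sigma \in \symg{k}$, and only the single-cycle $\sigma$ yields the $t_k$ in $\mathrm{im}(\Mor)$. Showing that the multi-cycle contributions are cohomologically absorbed---for instance, via a filtration on the number of cycles together with a spectral-sequence collapse driven by the nullhomotopy $h_B$---requires nontrivial combinatorial bookkeeping, and the finite-$N$ syzygies of invariant theory complicate matters further. The cleanest alternative would be to adapt the classical explicit chain homotopy for Morita invariance of cyclic (co)homology of associative algebras (as in Loday's textbook) to the $\ai$ setting by summing over higher-arity insertions of the structure maps $m_k^A$; the underlying mechanism exploits the matrix-unit algebra identically, but the $\ai$ coherences demand careful sign and ordering checks. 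Once such a chain homotopy is in hand, combining with the retraction $\Res \circ \Mor = \id$ shows that $\Mor$ and $\Res$ are mutually inverse quasi-isomorphisms.
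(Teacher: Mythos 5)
Your first two steps (that $\Mor$ and $\Res$ are chain maps and that $\Res\circ\Mor=\id$, using $E_{11}$ and $\tr(E_{11}^k)=1$) are fine, but the heart of the theorem --- that both maps are quasi-isomorphisms --- is not actually established by either route you sketch, and your primary route has a conceptual problem, not just ``bookkeeping.'' The nullhomotopy $h_B$ and Weyl's theorem at best reduce you to the $\gl{N}{\gf}$-invariant subcomplex of $\CycHoch{\mat{N}{A}}$, but the image of $\Mor$ consists only of the single-cycle trace tensors $t_k$, whereas the invariants contain all multi-cycle classes $T_\sigma$, $\sigma\in\symg{k}$, together with the finite-$N$ relations among them when $k>N$. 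Showing that the multi-cycle classes are cohomologically negligible is precisely the hard invariant-theoretic core of the Loday--Quillen--Tsygan-type analysis; the homotopy $h_B$ only trivializes the $\gl{N}{\gf}$-action and gives no mechanism, filtration or otherwise, that ``absorbs'' these classes. So as written this route is either circular or at least as difficult as the theorem itself. Your fallback --- adapting Loday's explicit chain homotopy for Morita invariance to the $\ai$-setting with all higher $m_k$ insertions --- is plausible but is exactly the step you leave unverified, so the proposal contains a genuine gap.

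The missing idea that closes the gap cheaply is a reduction to the strictly associative case by a spectral sequence, which is how the paper argues. Filter both cyclic complexes by the tensor order $k$; this is a complete, bounded-above filtration compatible with $\Mor$ and $\Res$, and since the component of the differential dual to $m_j$ raises tensor length by $j-1$, the differential on page zero is induced by $m_1$ alone. On page one both complexes become the classical cyclic complexes of the unital associative algebra $HA$ (respectively $\mat{N}{HA}$), and $\Mor$ and $\Res$ induce the classical generalized trace and corner-inclusion maps, which are inverse isomorphisms on cyclic cohomology by the classical Morita invariance theorem (Loday, Theorem 2.4.6). The Eilenberg--Moore comparison theorem for complete filtrations then upgrades the $E_1$-isomorphism to a quasi-isomorphism of the original complexes. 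This avoids both the finite-$N$ invariant theory and any explicit $\ai$-chain homotopy; no new homotopy formulas or sign checks are needed beyond the classical case.
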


\begin{proof}
Consider the descending filtrations of the above cyclic complexes given by filtering by the order $k$ of the tensors. These are complete bounded above filtrations compatible with the above maps. The differential on page zero of the spectral sequences is just the one induced from the differential $m_1$ on the $\ai$-algebra $A$. The cohomology $HA$ of the $\ai$-algebra $A$ with respect to $m_1$ is a unital strictly associative algebra. On page one of our spectral sequence the differential is just the usual cyclic differential coming from the strictly associative algebra structure on $HA$. Hence on page one of our spectral sequence the above maps $\Mor$ and $\Res$ induce an isomorphism by the classical theorem on Morita invariance, cf. \cite{loday} Theorem 2.4.6. The result now follows from the Eilenberg-Moore Comparison Theorem, cf. \cite{weibel} Theorem 5.5.11.
\end{proof}

\subsection{The Loday-Quillen-Tsygan Theorem} \label{sec_LQTthm}

The Loday-Quillen-Tsygan Theorem, discovered independently by Loday-Quillen \cite{lodayquillen} and Tsygan \cite{tsygan}, provides a description of the $N$-stable cohomology of $\gl{N}{A}$ in terms of the cyclic cohomology of $A$. It will play a central role in our exploration of large $N$ phenomena by allowing us to simultaneously handle all the possible phenomena arising for different values of $N$ through the use of an appropriate universal object.

Given an $\ai$-algebra, consider the composite map
\[ \csymp\left(\CycHoch{A}\right) \longrightarrow \csymp\left(\CycHoch{\mat{n}{A}}\right) \longrightarrow \csymp\left(\ChEilp{\gl{N}{A}}\right) \longrightarrow \ChEilp{\gl{N}{A}}, \]
where the left-hand map is induced by the map $\Mor$ from Theorem \ref{thm_moritainvariance}, the next is induced by \eqref{eqn_cyctoCE} and the last map applies multiplication (so an $n$-fold symmetric product goes to the product of the $n$ terms).
Since the multilinear forms \eqref{eqn_traceproduct} are $\gl{N}{\gf}$-invariant, this map lands in the $\gl{N}{\gf}$-invariants. This leads to a system of commuting maps,
\begin{equation} \label{eqn_LQTinvlim}
\xymatrix{ & \csymp\left(\CycHoch{A}\right) \ar[rd] \ar[ld] &  \\ \left[\ChEilp{\gl{N}{A}}\right]^{\gl{N}{\gf}} & & \left[\ChEilp{\gl{N+1}{A}}\right]^{\gl{N+1}{\gf}} \ar[ll] }
\end{equation}
in which the horizontal arrows are the restriction maps dual to the canonical inclusions of $\gl{N}{A}$ into $\gl{N+1}{A}$.

We are now ready to provide a statement of the Loday-Quillen-Tsygan Theorem, which we formulate for an $\ai$-algebra $A$. Although the original results \cite{lodayquillen, tsygan} of Loday-Quillen and Tsygan applied only to associative algebras, the arguments in these original sources prove the theorem for $\ai$-algebras, with no additions needed.

\begin{theorem} \label{thm_LQT}
The diagram \eqref{eqn_LQTinvlim} establishes $\csymp\left(\CycHoch{A}\right)$ as an inverse limit,
\[ \csymp\left(\CycHoch{A}\right) = \invlim{N}\left(\left[\ChEilp{\gl{N}{A}}\right]^{\gl{N}{\gf}}\right). \]
Moreover, the same applies to the cohomology of these complexes:
\begin{equation} \label{eqn_LQTinvlimhomology}
H^\bullet\left(\csymp\left(\CycHoch{A}\right)\right) = \invlim{N} H^\bullet\left(\left[\ChEilp{\gl{N}{A}}\right]^{\gl{N}{\gf}}\right).
\end{equation}
\end{theorem}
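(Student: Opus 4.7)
The plan is to first establish the chain-level isomorphism using invariant theory for $\gl{N}{\gf}$ at each tensor degree, and then bootstrap to the cohomological statement via a stabilization argument that ensures the Mittag-Leffler condition for the relevant inverse system.

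For the chain-level claim, I would decompose each side by tensor degree. Since the $\gl{N}{\gf}$-action on $\gl{N}{A}=A\otimes\mat{N}{\gf}$ only touches the matrix factor, the tensor-degree-$k$ summand of the right-hand side splits as
\[
\bigl[(\Sigma A^*)^{\otimes k}\otimes(\mat{N}{\gf}^*)^{\otimes k}\bigr]_{\symg{k}}^{\gl{N}{\gf}},
\]
with $\symg{k}$ acting diagonally. By Weyl's First Fundamental Theorem, the matrix factor $[(\mat{N}{\gf}^*)^{\otimes k}]^{\gl{N}{\gf}}$ is spanned by the trace monomials $t_\sigma(X_1,\ldots,X_k)=\prod_{c\in\sigma}\tr(X_{c(1)}X_{c(2)}\cdots)$ indexed by $\sigma\in\symg{k}$, producing a surjection $\gf[\symg{k}]\twoheadrightarrow[(\mat{N}{\gf}^*)^{\otimes k}]^{\gl{N}{\gf}}$ that becomes an isomorphism once $N\geq k$ by Procesi's sharpening. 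Taking $\symg{k}$-coinvariants turns permutations into conjugacy classes, i.e.\ unordered multisets of cyclic words, exactly matching the tensor-degree-$k$ summand of $\csymp(\CycHoch{A})$. The crucial observation is stabilization: the transition maps $[\ChEilp{\gl{N+1}{A}}]^{\gl{N+1}{\gf}}\to[\ChEilp{\gl{N}{A}}]^{\gl{N}{\gf}}$ in diagram~\eqref{eqn_LQTinvlim} are dual to the top-left block inclusion and respect the tensor-degree splitting; at fixed $k$ the system is eventually constant once $N\geq k$, with transition maps becoming identities on $\gf[\symg{k}]$.

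Next I would verify that the LQT map itself realizes this identification. It is built from $\Mor$ (a cochain map of cyclic complexes by Theorem~\ref{thm_moritainvariance}), the natural map~\eqref{eqn_cyctoCE}, and multiplication in $\csymp$; unwinding the definitions shows that a single cyclic word $a_1\otimes\cdots\otimes a_k\in\CycHoch{A}$ maps to the invariant corresponding to the long cycle $(1\,2\,\cdots\,k)\in\symg{k}$, and symmetric products of cyclic words map to products of the associated trace forms. Assembling across all tensor degrees via the completed product $\csymp=\prod_k[\,\cdots\,]_{\symg{k}}$ (which commutes with inverse limits of vector spaces) then yields the chain-level isomorphism of the theorem.

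For the cohomology statement~\eqref{eqn_LQTinvlimhomology}, the stabilization just described supplies the Mittag-Leffler condition needed to commute the inverse limit with cohomology. I would filter each side by tensor degree, check that the LQT map respects the filtration, and pass to the associated spectral sequences: on page zero the differential is induced from $m_1$, and the invariant-theoretic argument above, applied to the cohomology algebra $H^\bullet A$, shows that the inverse system stabilizes bidegreewise. Because the inverse system is eventually constant at each bidegree, $\varprojlim{}^1$ vanishes and standard convergence (in the spirit of the Eilenberg-Moore comparison invoked for Theorem~\ref{thm_moritainvariance}) propagates this to later pages. The main obstacle in my view is carefully packaging the interaction between the two filtrations (by tensor degree, and by $N$) together with the completed product $\csymp$: infinite products do not in general commute with cohomology, so the stabilization estimates must be organized so that only finitely many tensor degrees contribute at each stage of the computation. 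Once that bookkeeping is in place, the result follows.
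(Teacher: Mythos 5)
Your chain-level argument is sound and is essentially the invariant-theory computation of Loday--Quillen and Tsygan that the paper itself defers to (the paper offers no proof of Theorem~\ref{thm_LQT} beyond citing those sources): in tensor degree $k$ the $\gl{N}{\gf}$-invariants of the matrix factor are the image of $\gf[\symg{k}]$ under trace monomials, this becomes an isomorphism for $N\geq k$ and the transition maps are then identities, and passing to $\symg{k}$-coinvariants with the conjugation action yields symmetric products of cyclic words; since $\csymp$ is a product over tensor degrees and inverse limits commute with products, the first assertion of the theorem follows, and your identification of the LQT map with the long-cycle trace form is the right compatibility check.

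The gap is in the cohomological statement \eqref{eqn_LQTinvlimhomology}. Tensor-degree-wise stabilization gives surjectivity of the cochain transition maps, hence a Milnor sequence $0\to\varprojlim^1_N H^{n-1}(C_N)\to H^n(\varprojlim_N C_N)\to\varprojlim_N H^n(C_N)\to 0$ for the tower $C_N:=\left[\ChEilp{\gl{N}{A}}\right]^{\gl{N}{\gf}}$; but the Mittag-Leffler-type condition you actually need is the vanishing of $\varprojlim^1$ of the tower of \emph{cohomologies}, in a fixed ($\ZZ/2$-graded) cohomological degree where all tensor degrees contribute at once, and this does not follow from stabilization of the graded pieces. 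Your spectral-sequence route does not close this: the tensor-degree filtration is infinite and complete, the differential strictly raises tensor degree (an $\ai$-structure may have infinitely many operations), so it is not true that ``only finitely many tensor degrees contribute at each stage,'' and eventual bidegreewise constancy of $E_1$ controls neither the abutments $H^\bullet(C_N)$ nor their $\varprojlim^1$ --- indeed your closing sentence concedes exactly the point that remains unproven. The clean repair is linear duality: $C_N$ is the full dual of the $\gl{N}{\gf}$-coinvariant reduced Chevalley--Eilenberg \emph{chain} complex $D_N$, so the tower $\{C_N\}$ is the dual of the direct system $\{D_N\}$; over a field duality is exact, turns colimits into limits, and homology commutes with filtered colimits, whence $H^\bullet\left(\varprojlim_N C_N\right)=\left(H_\bullet\left(\varinjlim_N D_N\right)\right)^*=\varprojlim_N H^\bullet(C_N)$ with no Mittag-Leffler analysis (equivalently, $\varprojlim^1$ vanishes for any tower dual to a direct system, as one sees by dualizing $0\to\bigoplus_N D_N\to\bigoplus_N D_N\to\varinjlim_N D_N\to 0$). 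With that substitution for your last paragraph, the proof is complete and consistent with the argument the paper cites.
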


\begin{rem}
We emphasize that in the above statement, $A$ does \emph{not} need to be a \emph{unital} $\ai$-algebra.
\end{rem}

\section{The Batalin-Vilkovisky formalism and noncommutative geometry} \label{sec_BVNonComGeom}

In this section, we recall the basic framework of the Batalin-Vilkovisky formalism in both its commutative \cite{bvgeom} and noncommutative contexts \cite{baran, hamcompact, kontsympgeom}. Noncommutative symplectic geometry was introduced in \cite{kontsympgeom} by Kontsevich where it was used to describe the cohomology of the moduli space of Riemann surfaces. In the subsequent sections we will see how this noncommutative geometry appears in the large $N$ limit when we try to handle all ranks $N$ of the theory simultaneously.

\subsection{The commutative Batalin-Vilkovisky formalism} \label{sec_commBV}

We begin by briefly recalling the standard geometric framework of the Batalin-Vilkovisky formalism. Let $V$ be a graded vector space with a symplectic form $\innprod$ of odd degree. The inverse form $\innprod^{-1}$ on $V^*$ is defined by the commutative diagram
\begin{equation} \label{eqn_invinnprod}
\xymatrix{ & \gf \\ V\otimes V \ar[ur]^{\innprod} \ar[rr]^{D_l \otimes D_r} && V^*\otimes V^* \ar[ul]_{\innprod^{-1}}}
\end{equation}
where $D_l(y):=\langle y,- \rangle$ and $D_r(y):=\langle -,y \rangle$.
While the form $\innprod$ is skew-symmetric, the Koszul sign rule implies that the inverse form $\innprod^{-1}$ is symmetric.

From the inverse form we define a Poisson bracket $\{-,-\}$ of odd degree on both $\sym V^*$ and $\csym V^*$ by extending $\innprod^{-1}$ to the commutative algebras using the Leibniz rule.

\begin{defi} \label{def_poisson}
For a vector space $V$ with an odd symplectic form, we define
\[ \Poispoly{V}:= \sym V^*\]
and
\[ \Pois{V}:= \csym V^*\]
to be the Poisson algebras with odd Poisson bracket~$\{-,-\}$.
\end{defi}

This structure is sometimes known as a \emph{shifted Poisson algebra}.
Note that one can generate many other shifted Poisson algebras from these, as follows.
Pick a degree zero element $S$ such that $\{S,S\} = 0$, so that the degree 1 derivation $\{S, -\}$ determines a differential on the original algebra and hence a dg shifted Poisson algebra.
We will discuss the moduli of such shifted Poisson algebras in Sections \ref{sec_obsthy} and~\ref{sec_largeN}.

Every function $f$ in a Poisson algebra determines a Hamiltonian vector field $\{f, -\}$.
In our setting, with $V$ symplectic, this relationship provides a canonical isomorphism from $\symp V^*$ to the Lie algebra of symplectic vector fields (i.e., derivations on $\sym V^*$ preserving the symplectic form), as constant terms do not affect a Hamiltonian vector field.
Similarly, the symplectic vector fields acting on $\csym V^*$ are given by $\csymp V^*$.

There is a natural quantization of these shifted Poisson algebras $\Poispoly{V}$ and $\Pois{V}$ in the Batalin-Vilkovisky sense:
there is a deformation of the differential determined by the Poisson bracket.
To whit, the BV-Laplacian $\Delta$ is the unique operator on $\Poispoly{V}$ or $\Pois{V}$ satisfying
\begin{equation} \label{eqn_BVidentity}
\Delta(ab) = (\Delta a)b + (-1)^a a(\Delta b) +\{a,b\}
\end{equation}
and vanishing on $\sym^{< 2} V^*$ (i.e., on linear terms in $V^*$ and constant terms in~$\gf$).

\begin{defi}
For a vector space $V$ with an odd symplectic form, we define
\[ \ComBVpoly{V}:=\gf[\hbar]\otimes\sym V^* \]
and
\[ \ComBV{V}:=\gf[[\hbar]]\cotimes\csym V^* \]
to be the differential graded Lie algebra with odd bracket given by extending $\{-,-\}$ linearly with respect to $\hbar$ and with differential $\hbar\Delta$ defined in the same manner. Note that here the formal variable $\hbar$ has degree zero.
\end{defi}

This differential graded Lie algebra encodes the standard framework for quantization and the calculation of expectation values in the Batalin-Vilkovisky formalism.
It is not a dg commutative algebra because the differential is not a derivation for the commutative product.
Note, however, that the Lie bracket has odd degree and acts by derivations on the commutative product, which has even degree; this type of structure is sometimes referred to as a \emph{Beilinson-Drinfeld algebra}.
The quotients $\ComBVpoly{V}/\langle\hbar\rangle$ and $\ComBV{V}/\langle\hbar\rangle$ are shifted Poisson algebras because they have both a commutative product and a differential that is a derivation (because it is trivial).
Viewing the quotient as a way of setting $\hbar=0$,
we can see these algebras as ``quantizations'' of shifted Poisson algebras.

\subsection{The noncommutative Batalin-Vilkovisky formalism}

Now we describe a noncommutative counterpart to the geometric framework described above. The following definition is due to Kontsevich \cite{kontsympgeom}, which he introduced as a cyclic analogue of the Lie algebra of Hamiltonian vector fields on a symplectic vector space.

\begin{defi} \label{def_noncomBV}
Given a vector space $V$ with a symplectic form $\innprod$ of odd degree, we define a Lie bracket of odd degree on
\[ \NCHampoly{V}:=\bigoplus_{k=0}^\infty \big[(V^*)^{\otimes k}\big]_{\cycg{k}}, \]
and
\[ \NCHam{V}:=\prod_{k=0}^\infty \big[(V^*)^{\cotimes k}\big]_{\cycg{k}}, \]
by the formula
\[ \{(a_1\cdots a_m),(b_1\cdots b_n)\} := \sum_{i=1}^m\sum_{j=1}^n \pm\langle a_i,b_j \rangle^{-1} (a_{i+1}\cdots a_m a_1 \cdots a_{i-1} b_{j+1}\cdots b_n b_1\cdots b_{j-1}), \]
where $a_1,\ldots a_m, b_1,\ldots b_n\in V^*$ and the sign is determined canonically by the Koszul sign rule.
\end{defi}

There is also a Lie cobracket of odd degree on $\NCHampoly{V}$,
\[ \nabla:\NCHampoly{V}\to\left(\NCHampoly{V}\otimes\NCHampoly{V}\right)_{\symg{2}}, \]
given by the formula
\begin{equation} \label{eqn_noncomcobracket}
\nabla(a_1\cdots a_n):=\sum_{1\leq i < j\leq n}\pm\langle a_i,a_j \rangle^{-1} (a_{i+1}\cdots a_{j-1})\otimes(a_{j+1}\cdots a_n a_1 \cdots a_{i-1}).
\end{equation}

Again, the sign is determined by the Koszul sign rule.
The same formula equips $\NCHam{V}$ with a Lie cobracket of odd degree (technically this cobracket lands in the \emph{completed} symmetric tensor product, although we will ignore this distinction).
This construction of $\nabla$ was first described by Movshev in \cite{movshevcobracket}.
These structures turn $\NCHampoly{V}$ and $\NCHam{V}$ into Lie bialgebras.

Note that there is a subspace
\[ \NCHamPpoly{V}:=\bigoplus_{k=1}^\infty \big[(V^*)^{\otimes k}\big]_{\cycg{k}} \]
of $\NCHampoly{V}$, and there is a subspace
\[ \NCHamP{V}:=\prod_{k=1}^\infty \big[(V^*)^{\cotimes k}\big]_{\cycg{k}} \]
of $\NCHam{V}$.
This notation is parallel to the notation $\symp{V}$ for the augmentation ideal of $\sym{V}$.
We will use $\nu$ to denote the generator of the ``constant'' term $(V^*)^{\otimes 0}$ inside $\NCHampoly{V}$ or $\NCHam{V}$.
Thus,
\[ \NCHampoly{V} = \gf \nu \oplus \NCHamPpoly{V}, \]
and similarly for $\NCHam{V}$.

\subsection{Relation to cyclic infinity-structures} \label{sec_reltocyclicstruct}

We now explain how the constructions of the preceding subsection can be used to encode cyclic infinity-structures and their cohomology. As we will describe below, the Lie algebras defined in Definition \ref{def_poisson} and Definition \ref{def_noncomBV} may be identified with the cyclic and Chevalley-Eilenberg complexes when they are equipped with a differential that arises through the adjoint action of an element representing the cyclic infinity structure, see Equation \eqref{eqn_Hochbracketdiff} and \eqref{eqn_CEbracketdiff} below.

Let $A$ be a cyclic $\ai$-algebra with inner product $\innprod$ of odd degree. Since this bilinear form is symmetric it gives rise to a symplectic form, also denoted by $\innprod$, on $\Sigma A$.
\begin{equation} \label{eqn_sympform}
\xymatrix{ & \gf \\ A\otimes A \ar[ur]^{\innprod} \ar[rr]^{\Sigma \otimes \Sigma} && \Sigma A \otimes \Sigma A \ar[ul]_{\innprod}}
\end{equation}

Consider the family of cyclically antisymmetric multilinear forms \eqref{eqn_cyclicainfstructuremaps} on $A$ determined by the $\ai$-structure $m$ on $A$. These give rise to cyclically symmetric tensors on $\Sigma A$ and hence to an element of,
\[ \prod_{k=2}^\infty \big[(\Sigma A^*)^{\otimes k}\big]^{\cycg{k}} \cong \prod_{k=2}^\infty \big[(\Sigma A^*)^{\otimes k}\big]_{\cycg{k}}\subset\NCHam{\Sigma A}. \]
We denote the corresponding element of $\NCHam{\Sigma A}$ by $\tilde{m}$. Note, importantly, that the above isomorphism is defined so that it takes a cyclically invariant $k$-fold tensor $\alpha_k$ on the left to the cyclically coinvariant tensor $\frac{1}{k}\alpha_k$ on the right. The homotopy coherence relations are then equivalent to the equation $\{\tilde{m},\tilde{m}\}=0$. In this way, the moduli space of cyclic $\ai$-structures on $A$ may be identified with the Maurer-Cartan moduli space that is associated to the Lie subalgebra of $\NCHam{\Sigma A}$ consisting of quadratic and higher order elements.

We can also describe the cohomology of cyclic infinity-structures using the constructions of the preceding subsection. We note that $\NCHamP{\Sigma A}$ is the underlying space of the cyclic complex and that the Hochschild differential satisfies
\[ m(\alpha) = -\{\tilde{m},\alpha\}, \quad \alpha\in\prod_{k=1}^\infty \big[(\Sigma A^*)^{\otimes k}\big]_{\cycg{k}}. \]
Hence the cyclic complex may be identified within the BV formalism as
\begin{equation} \label{eqn_Hochbracketdiff}
\CycHoch{A} = \left(\NCHamP{\Sigma A}, -\{\tilde{m},-\}\right),
\end{equation}
a relationship that plays a key role in this paper.

A story similar to that described above holds for cyclic $\li$-structures. Given a cyclic $\li$-algebra $\mathfrak{g}$ the family \eqref{eqn_cycliclinfstructuremaps} of antisymmetric tensors on $\mathfrak{g}$ that arises from the $\li$-structure $l$ on $\mathfrak{g}$ yields an element of,
\[ \prod_{k=2}^\infty \big[(\Sigma \mathfrak{g}^*)^{\otimes k}\big]^{\symg{k}} \cong \prod_{k=2}^\infty \big[(\Sigma \mathfrak{g}^*)^{\otimes k}\big]_{\symg{k}}\subset\csym\Sigma\mathfrak{g}^*. \]
We denote the corresponding element in $\csym\Sigma\mathfrak{g}^*$ by $\tilde{l}$. Again, we note that here the above isomorphism is defined so that it takes an invariant $k$-fold tensor $\alpha_k$ on the left to the coinvariant tensor $\frac{1}{k!}\alpha_k$ on the right. The homotopy Jacobi identities are then equivalent to the equation $\{\tilde{l},\tilde{l}\}=0$. In this way, the moduli space of cyclic $\li$-structures on $\mathfrak{g}$ may be identified with the Maurer-Cartan moduli space of quadratic and higher order elements in the Lie algebra $\csym\Sigma\mathfrak{g}^*$. Via the BV formalism, the Chevalley-Eilenberg complex may then be identified similarly as
\begin{equation} \label{eqn_CEbracketdiff}
\ChEilp{\mathfrak{g}}:=\left(\csymp\Sigma\mathfrak{g}^*,-\{\tilde{l},-\}\right),
\end{equation}
another relationship that plays a key role in this paper.

\subsection{The action of $\gl{N}{\gf}$}

We now explain how the action of $\gl{N}{\gf}$ described in Section \ref{sec_LinfCE} is compatible with the structures of the Batalin-Vilkovisky formalism. Recall that given a cyclic $\ai$-algebra $A$ whose inner product has odd degree, the Lie algebra $\gl{N}{\gf}$ acts on $\csym\Sigma\gl{N}{A}^*$. The latter has the structure of a BV-algebra, cf. Equation \eqref{eqn_BVidentity}, where the pairing on $\gl{N}{A}$ combines the pairing on $A$ with the trace pairing on matrices. The action by $\gl{N}{\gf}$ respects this structure in the following sense.

\begin{lemma} \label{lem_BVinvariant}
For all $f,g\in\csym\Sigma\gl{N}{A}^*$ and $B\in\gl{N}{\gf}$:
\begin{enumerate}
\item \label{item_BVinvariantidentities1}
$B\cdot(fg) = (B\cdot f)g + f(B\cdot g)$,
\item \label{item_BVinvariantidentities2}
$B\cdot\{f,g\} = \{Bf,g\} + \{f,Bg\}$,
\item \label{item_BVinvariantidentities3}
$\Delta(B\cdot f) = B\cdot\Delta f$.
\end{enumerate}
\end{lemma}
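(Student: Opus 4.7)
The plan is to dispatch (1) immediately from the definition of the action, then use (1) together with invariance of the pairing to obtain (2), and finally use (1), (2), and the defining property of $\Delta$ to obtain (3). Claim (1) is essentially by construction: the action of $\gl{N}{\gf}$ on $\csym\Sigma\gl{N}{A}^*$ was specified (cf.\ Section~\ref{sec_LinfCE}) to be by derivations of the commutative algebra, extending the infinitesimal adjoint action on the linear generators.

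For (2), the first step is to verify that the action of $\gl{N}{\gf}$ on $\gl{N}{A}$ preserves the pairing. Since this pairing combines the pairing on $A$ with the trace pairing on $\mat{N}{\gf}$, and $B \in \gl{N}{\gf}$ acts trivially on the $A$-factor, it suffices to observe that the adjoint action preserves the trace pairing, i.e.\ $\tr([B,X]Y) + \tr(X[B,Y]) = 0$, which follows from cyclicity of the trace. Dualizing and passing to $\Sigma\gl{N}{A}^*$, the induced action preserves the inverse form $\innprod^{-1}$ entering the definition of $\{-,-\}$. Since the bracket is the unique odd biderivation of $\csym\Sigma\gl{N}{A}^*$ extending $\innprod^{-1}$, and $B$ acts by derivations by (1), the identity in (2) holds on generators and therefore on the whole algebra.

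For (3), my plan is to appeal to the uniqueness of the BV Laplacian. Define the commutator $D_B := \Delta \circ B - B \circ \Delta$. Since $B$ preserves the filtration by symmetric degree and $\Delta$ vanishes on $\sym^{<2} V^*$ (for $V = \Sigma\gl{N}{A}$), one sees immediately that $D_B$ annihilates linear and constant elements. The key step is to check that $D_B$ is itself a derivation of the commutative product: expanding $\Delta(B(fg))$ via (1) and the BV identity \eqref{eqn_BVidentity}, and comparing to $B\Delta(fg)$ similarly expanded, the Poisson-bracket terms cancel as a direct consequence of (2), leaving $D_B(fg) = (D_B f)\cdot g + (-1)^{|f|} f\cdot(D_B g)$. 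Any continuous derivation of $\csym\Sigma\gl{N}{A}^*$ vanishing on $V^* \oplus \gf$ must be zero, yielding (3).

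The only real difficulty is the sign bookkeeping in the derivation check for (3): one must track Koszul signs coming from the odd degrees of both $\Delta$ and the bracket, and confirm that the cancellation invokes precisely the Leibniz identity (2). A minor technical point is that everything takes place in the completed symmetric algebra, but since $B$ acts by continuous derivations, the uniqueness-on-generators argument for $\Delta$ (and hence for $D_B$) extends without modification.
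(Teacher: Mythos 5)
Your proof is correct and follows essentially the same route as the paper's: (1) is by definition of the action by derivations; (2) reduces to linear generators because the bracket is a biderivation, and there it comes down to the trace vanishing on commutators; (3) shows $[\Delta,B]$ is a derivation (via (1), (2), and the BV identity \eqref{eqn_BVidentity}) that vanishes on generators, hence is zero. The only difference is that you spell out the sign bookkeeping and the dualization to $\innprod^{-1}$ in somewhat more detail than the paper does.
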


\begin{proof}
\eqref{item_BVinvariantidentities1} is tautological and since $\{-,-\}$ is a Poisson bracket and hence acts by derivations on $\csym\Sigma\gl{N}{A}^*$, it suffices to verify \eqref{item_BVinvariantidentities2} for $f$ and $g$ in $\Sigma\gl{N}{A}^*$; this follows as the trace vanishes on commutators. To prove \eqref{item_BVinvariantidentities3}, we note that it follows from \eqref{item_BVinvariantidentities1}, \eqref{item_BVinvariantidentities2} and Equation \eqref{eqn_BVidentity} that $[\Delta,B]$ is a derivation on $\csym\Sigma\gl{N}{A}^*$ and hence must be zero since it vanishes on $\Sigma\gl{N}{A}^*$.
\end{proof}

It follows that the $\gl{N}{\gf}$-invariants form a BV-subalgebra of $\csym\Sigma\gl{N}{A}^*$ and hence that,
\[ \ComBV{\Sigma\gl{N}{A}}^{\gl{N}{\gf}} = \gf[[\hbar]]\cotimes\left[\csym\Sigma\gl{N}{A}^*\right]^{\gl{N}{\gf}} \]
is a differential graded Lie subalgebra of $\ComBV{\Sigma\gl{N}{A}}$.

\section{Morita Invariance and the Loday-Quillen-Tsygan Theorem in the Batalin-Vilkovisky formalism} \label{sec_MinvLQTBV}

In this section we begin to describe how Morita invariance and the Loday-Quillen-Tsygan Theorem fit into the homological algebra of the Batalin-Vilkovisky formalism.

\subsection{Relating the commutative and noncommutative formalisms} \label{sec_relcomnoncom}

We are now in a position to describe a key relationship for our paper.
We will show that, first, there is a canonical map from the noncommutative object built from an odd symplectic vector space $V$ to the commutative BV object built from $V$ and, second, that a noncommutative BV quantization determines canonically a commutative BV quantization.
These relationships provide the first step toward a quantized Loday-Quillen-Tsygan theorem.

Both $\NCHampoly{V}$ and $\Poispoly{V}$ are, as graded vector spaces, quotients of the tensor algebra $\ten{V^*}$.
Hence there is a canonical quotient map
\begin{equation}\label{eqn_HamPoispoly}
\sigma: \NCHampoly{V} = \bigoplus_{k=0}^\infty \big[(V^*)^{\otimes k}\big]_{\cycg{k}} \longrightarrow \bigoplus_{k=0}^\infty \big[(V^*)^{\otimes k}\big]_{\symg{k}} = \Poispoly{V}
\end{equation}
sending a cyclic word in $k$ letters to a symmetric word in $k$ letters, because $\cycg{k} \subset \symg{k}$.
Note that $\nu$ goes to 1.
By direct inspection, $\sigma$ is a map of Lie algebras with odd bracket:
on both sides, the bracket arises by pairing off two elements in a word in all possible ways, whether the word is cyclic or symmetric.

Similarly, there is a canonical map of Lie algebras
\[ \widehat{\sigma}: \NCHam{V} = \prod_{k=0}^\infty \big[(V^*)^{\cotimes k}\big]_{\cycg{k}} \longrightarrow \prod_{k=0}^\infty \big[(V^*)^{\cotimes k}\big]_{\symg{k}} = \Pois{V}, \]
by replacing direct sum with product.

Any map $\mathfrak{g} \to P$ from a Lie algebra to a Poisson algebra determines canonically a map of Poisson algebras $\sym \mathfrak{g} \to P$, where the commutative algebra $\sym \mathfrak{g} $ generated by $\mathfrak{g} $ has a canonical Poisson bracket given by extending the Lie bracket of $\mathfrak{g}$ using the Leibniz rule and the Poisson map is the canonical map of commutative algebras.
Hence we have maps of shifted Poisson algebras
\begin{equation}\label{eqn_PoisHamPoispoly}
\sigma_P: \sym\left({\NCHampoly{V}}\right) \longrightarrow \Poispoly{V}
\end{equation}
and
\begin{equation}\label{eqn_PoisHamPois}
\widehat{\sigma}_P: \bigoplus_{i=0}^\infty\left[\NCHam{V}^{\cotimes i}\right]_{\symg{i}} \longrightarrow \Pois{V}.
\end{equation}
Observe that
\[ \sym(\NCHampoly{V}) = \gf[\nu] \otimes \sym(\NCHamPpoly{V}), \]
and so we can describe these maps concretely as
\[ \nu^m (a_{11}\cdots a_{1k_1})\cdots(a_{n1}\cdots a_{nk_n}) \mapsto a_{11}\cdots a_{1k_1}\cdots a_{n1}\cdots a_{nk_n}, \]
sending a symmetric product of cyclic words to the symmetric product of all the terms.
These maps send $\nu$ to 1.
Note that there is a commuting square for these maps induced by the inclusions $\NCHampoly{V} \hookrightarrow \NCHam{V}$ and $\Poispoly{V} \hookrightarrow \Pois{V}$.

\begin{rem}
We have worked only with the direct sum for the domain of \eqref{eqn_PoisHamPois}.
One cannot extend the map to the completion $\csym(\NCHam{V})$ as it is not continuous; that is the map sends $\nu$ to 1 and hence, for instance, any formal power series $f(\nu)$ from $\csym(\NCHam{V})$ would be sent to $f(1)$, which is (in general) not well-defined.
\end{rem}

These maps relate two examples of the commutative BV formalism at the classical level.
It is natural to ask whether we can quantize these maps, and we will formulate two answers.
The first quantization is quite straightforward.

\begin{defi}
Given a vector space $V$ with a symplectic form $\innprod$ of odd degree, we define
\[ \NCBVpoly{V} = \gf[\hbar] \otimes \sym\left(\NCHampoly{V}\right) \]
to be the differential graded Lie algebra with odd bracket given by extending the bracket on $\sym(\NCHampoly{V})$ linearly over $\hbar$ and with differential $\hbar(\delta + \nabla)$,
where $\delta$ denotes the Chevalley-Eilenberg differential associated to the Lie algebra $\NCHampoly{V}$ (it arises by extending the bracket as a coderivation on the commutative coalgebra $\sym\left(\NCHampoly{V}\right)$) and the cobracket $\nabla$ defined by \eqref{eqn_noncomcobracket} on $\NCHampoly{V}$ is extended to $\sym\left(\NCHampoly{V}\right)$ as a derivation.

Similarly, let
\[ \NCBV{V} = \gf[\hbar] \otimes \left(\bigoplus_{i=0}^\infty\left[\NCHam{V}^{\cotimes i}\right]_{\symg{i}}\right) \]
denote the corresponding construction.
\end{defi}

It follows from the Lie bialgebra axioms for $\NCHampoly{V}$ that the differential has square zero and acts as a derivation for the bracket.
Note that it is well-known that $\sym\left(\NCHampoly{V}\right)$ is canonically a BV-algebra when equipped with the Chevalley-Eilenberg differential $\delta$. This remains true when $\delta$ is replaced by $(\delta+\nabla)$, as above, as $\nabla$ is a derivation.

\begin{prop}\label{first quantum sigma}
The map \eqref{eqn_PoisHamPoispoly} extends $\hbar$-linearly to a map
\[ \sigma_\hbar: \NCBVpoly{V} \longrightarrow \ComBVpoly{V} \]
of differential graded Lie algebras.

Likewise, the map \eqref{eqn_PoisHamPois} extends $\hbar$-linearly to a map
\[ \widehat{\sigma}_\hbar: \NCBV{V} \longrightarrow \ComBV{V} \]
of differential graded Lie algebras.
\end{prop}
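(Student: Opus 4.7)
The plan is to reduce the proposition to the single identity $\sigma_P\circ(\delta+\nabla)=\Delta\circ\sigma_P$ on $\sym(\NCHampoly{V})$, with the completed analogue for $\widehat{\sigma}_P$. Compatibility with the Lie bracket comes for free: $\sigma$ was already observed to be a morphism of odd Lie algebras by direct inspection of \eqref{eqn_HamPoispoly}, $\sigma_P$ is the canonical Poisson-algebra extension of $\sigma$ (hence in particular a map of odd Lie algebras), and the $\hbar$-linear extension $\sigma_\hbar=\id_{\gf[\hbar]}\otimes\sigma_P$ automatically preserves the brackets on $\NCBVpoly{V}$ and $\ComBVpoly{V}$, which are themselves defined by $\hbar$-linear extension. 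Since $\hbar$ is central and $\sigma_\hbar$ is $\hbar$-linear, the chain-map property further reduces to the claimed identity in $\hbar$-degree zero.

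To establish that identity I would first unpack $\Delta$ on $\Poispoly{V}=\sym V^*$. Because $\Delta$ vanishes on $\sym^{<2}V^*$ and obeys the BV identity \eqref{eqn_BVidentity} with Poisson bracket coming from $\innprod^{-1}$, an easy induction on monomial length shows that $\Delta(v_1\cdots v_n)$ is a signed sum over pairs $1\le i<j\le n$ in which the letters $v_i,v_j$ are replaced by the scalar $\langle v_i,v_j\rangle^{-1}$ and the remaining $n-2$ letters are multiplied together. Now take a monomial $W=w_1\cdots w_k$ in $\sym(\NCHampoly{V})$ with $w_\alpha=(a_{\alpha,1}\cdots a_{\alpha,m_\alpha})$; its image $\sigma_P(W)$ is the product of all the letters $a_{\alpha,\beta}$. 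Partitioning the Laplacian sum over pairs of letter positions according to whether both positions lie in a single cyclic block $w_\alpha$ or in two distinct blocks $w_\alpha,w_{\alpha'}$, the intra-block terms reproduce $\sigma_P(\nabla W)$ directly from the cobracket formula \eqref{eqn_noncomcobracket} extended as a derivation (after $\sigma_P$ a cut cyclic word becomes the product of its letters, while the remaining $w_\beta$'s are carried through by the Leibniz rule), and the inter-block terms reproduce $\sigma_P(\delta W)$, since the bracket of Definition~\ref{def_noncomBV} on two cyclic words is precisely a double sum over pairs of positions, one in each word, with the scalar $\langle a_{\alpha,i},a_{\alpha',j}\rangle^{-1}$. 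Adding the two contributions yields $\Delta\circ\sigma_P=\sigma_P\circ(\delta+\nabla)$.

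The main obstacle is sign and combinatorial-factor bookkeeping. Koszul signs enter at three points: the defining formulas for $\{-,-\}$ and $\nabla$ (which use the odd pairing $\innprod^{-1}$), the coderivation and derivation extensions of $\delta$ and $\nabla$ to $\sym(\NCHampoly{V})$, and the iterated application of the BV identity used to expand $\Delta$ on a long monomial. In addition, one must reconcile the normalization factors built into the identification of invariants with coinvariants (the $1/k$ for cyclic and $1/k!$ for symmetric groups flagged in Section~\ref{sec_reltocyclicstruct}) so that the multiplicities on the two sides agree. Once the match is verified on a single generic pair of positions, linearity and the Leibniz/coderivation properties deliver the full identity.

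The completed version $\widehat{\sigma}_\hbar$ requires no additional idea: the Laplacian, bracket, and cobracket all act by finite sums on each homogeneous piece, so the same pair-by-pair matching gives $\widehat{\sigma}_P\circ(\delta+\nabla)=\Delta\circ\widehat{\sigma}_P$, and no continuity issue arises on $\bigoplus_i[\NCHam{V}^{\cotimes i}]_{\symg{i}}$. The obstruction mentioned after \eqref{eqn_PoisHamPois} concerns enlarging the domain to the completed symmetric algebra of $\NCHam{V}$, not the compatibility of differentials, so it does not interfere.
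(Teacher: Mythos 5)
Your proposal is correct and follows essentially the same route as the paper: reduce to the chain-map identity $\Delta\circ\sigma_P=\sigma_P\circ(\delta+\nabla)$ (Lie-bracket compatibility being free), then expand $\Delta$ on a product of images of cyclic words and split the contraction pairs into inter-block terms, matching $\sigma_P\delta$, and intra-block terms, matching $\sigma_P\nabla$. The paper packages the intra-block step as the standalone observation $\Delta\sigma_P(x)=\sigma_P\nabla(x)$ for a single cyclic word $x$ and then invokes the BV identity \eqref{eqn_BVidentity} together with $\sigma_P$ being a Poisson map, but the content is identical to your pair-by-letter-position bookkeeping; the normalization $1/k$, $1/k!$ worry you raise actually pertains to the correspondence $m\leftrightarrow\tilde m$ of Section~\ref{sec_reltocyclicstruct} rather than to $\sigma_P$ itself, which is the canonical coinvariants quotient and involves no such factors.
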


The statement of this result looks rather technical, but it has an interesting interpretation.
It says that there is a non-obvious ``commutative'' BV quantization of the Poisson algebra $\sym(\NCHampoly{V})$ that maps by passing to the usual ``commutative'' BV quantization $\ComBVpoly{V}$.
In conjunction with the Loday-Quillen-Tsygan Theorem,
this relationship will connect the noncommutative BV formalism with the commutative formalism at all ranks~$N$.

\begin{proof}
We have already seen that, modulo $\hbar$, these are maps of Lie algebras,
and so, as the maps are $\hbar$-linear extensions, these maps are still Lie algebra maps.
What we need to check is that they are cochain maps.

We begin by noting that for all $x=(x_1\cdots x_k)\in\NCHampoly{V}$,
\begin{equation} \label{eqn_cobracketisdelta}
\Delta\sigma_P(x) = \sigma_P\nabla(x).
\end{equation}
This is a consequence of Equation \eqref{eqn_BVidentity}, since it follows from this that $\Delta$ contracts all pairs of tensors $x_i\in V^*$ in $x$ using the inverse bilinear form $\innprod^{-1}$.

It suffices now to analyze the differentials on simple terms that span both the domain and range.
Consider a symmetric product of cyclic words
\[ \alpha = a_1\cdot a_2\cdots a_n, \quad a_i\in\NCHampoly{V} \]
in $\NCBVpoly{V}$.
Now from Equation \eqref{eqn_BVidentity} and the fact that \eqref{eqn_PoisHamPoispoly} is a map of Poisson algebras,
it follows that
\begin{displaymath}
\begin{split}
\Delta\sigma_{\hbar}(\alpha) &= \Delta(\sigma_P(a_1)\cdots \sigma_P(a_n)) \\
=& \sum_{1\leq i < j \leq n}\pm\{\sigma_P(a_i),\sigma_P(a_j)\}\cdot\sigma_P(a_1)\cdots \widehat{\sigma_P(a_i)}\cdots\widehat{\sigma_P(a_j)}\cdots \sigma_P(a_n) \\
&+ \sum_{1\leq i \leq n}\pm\Delta(\sigma_P(a_i))\cdot\sigma_P(a_1)\cdots\widehat{\sigma_P(a_i)}\cdots \sigma_P(a_n) \\
=&\sigma_P\delta(\alpha) + \sigma_P\nabla(\alpha) = \sigma_{\hbar}(\delta+\nabla)(\alpha)
\end{split}
\end{displaymath}
where on the last line we have used Equation \eqref{eqn_cobracketisdelta}.
\end{proof}

There is another quantization, motivated by the relationship between the noncommutative BV formalism and the moduli space of Riemann surfaces and with topological field theory.
An extensive discussion can be found in~\cite{hamcompact}, which motivates many of the choices and parameters below.

\begin{defi} \label{def_noncomquantumBV}
For a vector space $V$ having a symplectic form of odd degree, define
\begin{displaymath}
\begin{split}
\NonComBV{V} &:= \gf[[\gamma]]\cotimes\csymp(\NCHam{V}), \\
&= \Big[\nu\gf[[\gamma,\nu]]\Big] \times \Big[\gf[[\gamma,\nu]]\cotimes\csymp(\NCHamP{V})\Big].
\end{split}
\end{displaymath}
It is a differential graded Lie algebra with differential
\[ \Delta_K := \nabla + \gamma \,\delta \]
and with bracket $\{-,-\}$, where we extend the structures on $\csymp(\NCHam{V})$ linearly with respect to $\gamma$.
\end{defi}

\begin{prop}
The map $\sigma_K: \NonComBV{V} \to \ComBV{V}$ given by
\[ \sigma_K\left(\gamma^i\nu^j(a_{11}\cdots a_{1k_1})\cdots(a_{n1}\cdots a_{nk_n}) \right) = \hbar^{2i+j+n-1}a_{11}\cdots a_{1k_1}\cdots a_{n1}\cdots a_{nk_n} \]
is a map of differential graded Lie algebras.
\end{prop}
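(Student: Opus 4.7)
The plan is to reduce both the bracket and differential compatibility to what has been established for $\widehat{\sigma}:\NCHam{V}\to\Pois{V}$, with the $\hbar$-weighting $2i+j+n-1$ providing the bookkeeping that makes the calculation homogeneous. Write a typical monomial as $\alpha=\gamma^{i}\nu^{j}(a_1)\cdots(a_n)$ with each $(a_p)\in\NCHamP{V}$, so that the triple $(i,j,n)$ determines the $\hbar$-power. Throughout, the key inputs are that $\hbar$ is central on the target; that $\widehat{\sigma}$ intertwines the noncommutative Lie bracket with the Poisson bracket; that for a single cyclic word $\Delta\widehat{\sigma}=\widehat{\sigma}\nabla$, the analogue of \eqref{eqn_cobracketisdelta}; and that $\delta$ is the coderivation extension of the Lie bracket to the symmetric algebra.

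For bracket preservation, take $\alpha,\beta$ with invariants $(i_s,j_s,n_s)$ for $s=1,2$. The bracket on $\NonComBV{V}$ extends the one on $\NCHam{V}$ by Leibniz, so $\{\alpha,\beta\}$ picks one cyclic-word factor from each side, replaces them by their Lie bracket (a single cyclic word, by Definition~\ref{def_noncomBV}), and carries the remaining factors along. Hence the invariants of $\{\alpha,\beta\}$ are $(i_1+i_2,\,j_1+j_2,\,n_1+n_2-1)$, and applying $\sigma_K$ yields an $\hbar$-weight
\[ 2(i_1+i_2)+(j_1+j_2)+(n_1+n_2-1)-1=(2i_1+j_1+n_1-1)+(2i_2+j_2+n_2-1), \]
matching the weight of $\{\sigma_K(\alpha),\sigma_K(\beta)\}$ in $\ComBV{V}$. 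The tensorial content agrees because $\widehat{\sigma}$ carries the noncommutative bracket to the Poisson bracket.

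For differential compatibility, expand $\hbar\Delta\sigma_K(\alpha)=\hbar^{2i+j+n}\Delta(a_1\cdots a_n)$ via the iterated BV identity \eqref{eqn_BVidentity}. The terms split into \emph{internal} contractions, where $\Delta$ acts within a single $a_p$ and by \eqref{eqn_cobracketisdelta} becomes $\widehat{\sigma}\nabla(a_p)$, and \emph{external} contractions $\{a_p,a_q\}$ pairing two distinct factors. The internal family matches $\sigma_K(\nabla\alpha)$: applying $\nabla$ to some $(a_p)$ raises $n$ to $n+1$, producing the weight $\hbar^{2i+j+(n+1)-1}=\hbar^{2i+j+n}$. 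The external family matches $\sigma_K(\gamma\delta\alpha)$: applying $\delta$ merges two cyclic factors into one via the Lie bracket, so $(i,n)$ shifts to $(i+1,n-1)$ and the weight becomes $\hbar^{2(i+1)+j+(n-1)-1}=\hbar^{2i+j+n}$. Using once more that $\widehat{\sigma}$ is a Lie algebra map identifies the tensorial data on the nose, yielding $\hbar\Delta\sigma_K=\sigma_K(\nabla+\gamma\delta)=\sigma_K\Delta_K$.

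The main obstacle is purely bookkeeping: verifying that the Koszul signs produced by the BV identity on the commutative side agree with those built into $\{-,-\}$, $\nabla$, and $\delta$ on the noncommutative side. Since both sign conventions descend from the canonical quotients $(V^*)^{\otimes k}\to\bigl[(V^*)^{\otimes k}\bigr]_{\cycg{k}}$ and $(V^*)^{\otimes k}\to\bigl[(V^*)^{\otimes k}\bigr]_{\symg{k}}$, they match by direct inspection, just as was tacit in the proof of Proposition~\ref{first quantum sigma}. No structural input beyond that proposition and the Leibniz rule is required.
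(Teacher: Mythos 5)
Your proof is correct and follows essentially the same route as the paper: both reduce to the identity $\Delta\sigma_P=\sigma_P(\nabla+\delta)$ from Proposition~\ref{first quantum sigma} (your internal/external contraction split is exactly its proof) and then match $\hbar$-weights by tracking how $\nabla$, $\delta$ and the bracket shift the symmetric degree and the power of $\gamma$. One trivial caveat: when a contraction produces the empty word (e.g.\ bracketing two one-letter words, or $\nabla$ pairing adjacent letters), the triple $(i,j,n)$ shifts differently than you state, since an extra factor of $\nu$ appears; but the weight $2i+j+n-1$ depends only on $i$ and $j+n$, so your bookkeeping and conclusion are unaffected.
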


\begin{proof}
The map $\sigma_K$ can be understood as being built out of a couple of pieces.
First, it sends $\gamma$ to $\hbar^2$.
Second, it weights a symmetric product of $n$ cyclic words from $\NCHam{V}$ by a factor of $\hbar^{n-1}$, regardless of whether the words come from $\NCHamP{V}$ or are the `empty' word $\nu$.
In the proof of Proposition \ref{first quantum sigma}, we saw
\[ \Delta\sigma_P = \sigma_P(\nabla + \delta). \]
Note that $\nabla$ increases the symmetric degree by one, whereas $\delta$ decreases it by one. Hence it follows that for $\alpha\in\sym^n(\NCHampoly{V})$,
\begin{displaymath}
\begin{split}
\sigma_K(\nabla\alpha+\gamma\delta\alpha) &= \hbar^n\sigma_P\nabla\alpha + \hbar^2\hbar^{n-2}\sigma_P\delta\alpha \\
&= \hbar^n\Delta\sigma_P\alpha = \hbar\Delta\sigma_K\alpha.
\end{split}
\end{displaymath}
Note also that the Lie bracket decreases the total symmetric degree by one and so a similar calculation demonstrates that $\sigma_K$ also respects the Lie brackets.
\end{proof}

There are further variations on quantization but we do not need them in this paper.

\subsection{Morita invariance and quantizing the Loday-Quillen-Tsygan maps}

We now turn to putting the construction just developed into dialogue with the Loday-Quillen-Tsygan Theorem.
As a first step note that if $V$ is a vector space with odd symplectic pairing $\innprod$, then $V \otimes \mat{N}{\gf}$ has an odd symplectic pairing:
\[ \langle a \otimes X , b \otimes Y \rangle_N = \langle a, b \rangle\, \tr(XY). \]
Consider now the map
\[ \Mor_+:\NCHamP{V}\to\NCHamP{V\otimes\mat{N}{\gf}}. \]
arising from the map $\Mor$ of Theorem \ref{thm_moritainvariance},
which is defined by tensoring with the multilinear forms~\eqref{eqn_traceproduct}.
(Recall that this map $\Mor$ is built from traces of products of matrices.)
We can extend it to a map
\[ \Mor:\NCHam{V}\to\NCHam{V\otimes\mat{N}{\gf}}. \]
by sending $\nu$ to $N\nu$.
(In Sections~\ref{sec_random} and \ref{sec_largeN}, this scaling of $\nu$ will play a critical role.)
Let $\Mor$ denote as well the map from $\NCHampoly{V}$ to $\NCHampoly{V\otimes\mat{N}{\gf}}$.

This choice has a nice feature.

\begin{lemma} \label{lem_Moritamapbialgebras}
The map $\Mor$ is a map of Lie bialgebras.
\end{lemma}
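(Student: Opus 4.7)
My plan is to verify the two conditions separately: $\Mor$ respects the Lie bracket $\{-,-\}$ and $\Mor$ respects the Lie cobracket $\nabla$. In both cases, because the pairing on $V \otimes \mat{N}{\gf}$ is the tensor product of $\langle -,-\rangle$ with the trace pairing, the inverse pairing factors as the inverse form on $V^*$ tensored with the inverse trace form on $\mat{N}{\gf}^*$. Writing the inverse trace form via the Casimir element $\sum_{i,j} e^*_{ij}\otimes e^*_{ji}$ (where $e^*_{ij}$ is dual to the matrix unit $e_{ij}$), every computation reduces to a combinatorial identity about traces of products of matrices.

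For the bracket, the essential identity is
\[
\sum_{i,j} \tr(A\, e_{ij}\, B)\, \tr(C\, e_{ji}\, D) \;=\; \tr(C\,B\,A\,D),
\]
which is the standard Frobenius property of the trace pairing on $\mat{N}{\gf}$. Starting from two cyclic words $(a_1\cdots a_m)$ and $(b_1\cdots b_n)$ in $\NCHamP{V}$, the bracket on the $V$-side pairs $a_i$ with $b_j$ using $\langle -,-\rangle^{-1}$ and concatenates the remaining letters into a single cyclic word. On the $V\otimes\mat{N}{\gf}$-side, $\Mor$ replaces each word by a sum over matrix indices of cyclic words whose matrix parts form a trace; contracting one $V$-letter from each word brings in the Casimir contraction of the associated $e^*_{ij}$'s, and the displayed identity then collapses the two traces into a single trace of the concatenated matrix word. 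This is precisely $\Mor$ applied to the bracket output. The case involving $\nu$ is automatic since $\{\nu,-\}=0$ on both sides.

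For the cobracket the key identity is the dual one,
\[
\sum_{i,j} \tr(A\, e_{ij}\, B\, e_{ji}\, C) \;=\; \tr(B)\,\tr(C\,A),
\]
which follows from $\sum_{i,j} e_{ji}\,M\,e_{ij} = \tr(M)\,I$. Applied to a word of the form $\Mor((a_1\cdots a_n))$, contracting two letters in positions $i<j$ with the inverse pairing splits the single trace into a product of two traces whose $V$-parts are precisely the two cyclic subwords appearing in $\nabla(a_1\cdots a_n)$. When one of the two resulting subwords is empty, the formula specializes to $\sum_{i,j}\tr(A e_{ij} e_{ji} C) = \tr(I)\,\tr(CA) = N\,\tr(CA)$, so the ``empty trace'' contributes a factor of $N$. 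This is exactly compensated by the convention $\Mor(\nu)=N\nu$, so that on the target side the corresponding summand in $\nabla\circ\Mor$ agrees with $(\Mor\otimes\Mor)\circ\nabla$. A one-line check on $\Mor(\nu)=N\nu$ itself (where both $\nabla$ and $\{-,-\}$ vanish) completes the verification.

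The main obstacle is the careful bookkeeping of Koszul signs and cyclic rotations under the two reductions above: one must check that the canonical sign attached to the contraction in Definition \ref{def_noncomBV} matches the sign that arises on the matrix side after rotating the $e^*_{ij}$'s into position to apply the Frobenius/Casimir identities. Once one fixes a consistent convention for the identification $(\Sigma A^*)^{\otimes k}/\cycg{k} \cong [(\Sigma A^*)^{\otimes k}]^{\cycg{k}}$ (as described in Section \ref{sec_reltocyclicstruct}) and uses that the trace form $t_k$ is genuinely cyclically invariant, the signs match term by term and no further subtlety arises. Thus both structure-preservation checks amount to applying the two trace identities above inside the defining formulas for $\{-,-\}$ and $\nabla$, together with the $\nu\mapsto N\nu$ convention, which completes the proof.
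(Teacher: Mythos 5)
Your proof is correct, but it takes a genuinely different route from the paper. You verify the bracket and cobracket compatibilities by direct computation: factoring the inverse pairing on $V\otimes\mat{N}{\gf}$ as $\innprod^{-1}$ tensored with the Casimir $\sum_{i,j}e_{ij}\otimes e_{ji}$, and then invoking the two trace identities $\sum_{i,j}\tr(Ae_{ij}B)\tr(Ce_{ji}D)=\tr(BADC)$ (which fuses two traces into the concatenated cyclic word appearing in the bracket of Definition \ref{def_noncomBV}) and $\sum_{i,j}e_{ij}Me_{ji}=\tr(M)\,I$ (which splits one trace into the two subword traces appearing in \eqref{eqn_noncomcobracket}), with the degenerate case $M=I$ producing the factor $\tr(I)=N$ that forces, and is absorbed by, the normalization $\Mor(\nu)=N\nu$. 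The paper instead avoids this hands-on verification: it quotes the general result (Theorem \ref{thm_OTFTdglamap}, from the open topological field theory attached to an arbitrary Frobenius algebra $M$) that $\Phi_M$ is a map of differential graded Lie algebras, computes in Section \ref{Phi map on matrices} that for $M=\mat{N}{\gf}$ the structure tensors collapse to products of traces (Equation \eqref{eqn_calcOTFTtensor}), so that $\Phi_{\mat{N}{\gf}}=\widehat{\Mor}_{\gamma,\nu}$, and then obtains the lemma formally: bracket compatibility by restricting the Lie algebra map $\widehat{\Mor}_{\gamma,\nu}$ to the subalgebra $\NCHam{V}$, and cobracket compatibility from commutation with the differential $\nabla+\gamma\delta$ evaluated on symmetric degree one, where $\delta$ vanishes. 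Your approach is more elementary and self-contained, and it makes transparent exactly where the $\nu\mapsto N\nu$ convention is needed; the paper's approach buys generality (coupling to any Frobenius algebra) and outsources the Koszul sign bookkeeping to the cited reference, though its matrix computation \eqref{eqn_calcOTFTnulltensor} is essentially your trace identities in disguise -- indeed the paper explicitly notes that a direct verification along your lines is possible. The only soft spot in your write-up is that the sign check is asserted rather than carried out; since $\mat{N}{\gf}$ sits in even degree and the forms $t_k$ are cyclically invariant, the signs are governed entirely by the $V^*$-letters and your claim that they match term by term is indeed correct, but a referee would want that one sentence of justification made explicit.
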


This claim may be verified by direct computation -- the calculations are similar to those performed in Section \ref{Phi map on matrices}, see in particular Equation \eqref{eqn_calcOTFTnulltensor} -- although we will not take this approach here since we will provide an alternative proof based on a more general fact about tensoring $V$ with an associative Frobenius algebra $M$. This particular generalization is quite appealing: it can be seen as coupling the BV theory arising from $V$ to an open topological field theory associated to~$M$. The proof of Lemma \ref{lem_Moritamapbialgebras} and this generalization will be deferred to Section \ref{Phi map on matrices}.

As a consequence of the above lemma, we see that there is a canonical map
\begin{equation} \label{eqn_Moritapoisson}
\Mor_P: \sym(\NCHampoly{V}) \longrightarrow \sym(\NCHampoly{V\otimes\mat{N}{\gf}})
\end{equation}
of shifted Poisson algebras.
Similarly, replacing $\NCHampoly{V}$ with $\NCHam{V}$, we have a map of shifted Poisson algebras
\[ \widehat{\Mor}_P: \bigoplus_{i=0}^\infty\left[\NCHam{V}^{\cotimes i}\right]_{\symg{i}} \longrightarrow \bigoplus_{i=0}^\infty\left[\NCHam{V\otimes\mat{N}{\gf}}^{\cotimes i}\right]_{\symg{i}}. \]
Moreover, as the multilinear maps~\eqref{eqn_traceproduct} are $\gl{N}{\gf}$-invariant,
the image lands in the $\gl{N}{\gf}$-invariant subspace of the target.

Combining the above with \eqref{eqn_PoisHamPoispoly} and \eqref{eqn_PoisHamPois}, we obtain the following.

\begin{lemma} \label{lem_shiftpoismap}
The maps
\[ \sigma_P \circ \Mor_P: \sym(\NCHampoly{V}) \longrightarrow \Poispoly{V\otimes\mat{N}{\gf}}^{\gl{N}{\gf}} \subset \Poispoly{V\otimes\mat{N}{\gf}} \]
and
\[ \widehat{\sigma}_P \circ \widehat{\Mor}_P: \bigoplus_{i=0}^\infty\left[\NCHam{V}^{\cotimes i}\right]_{\symg{i}} \longrightarrow \Pois{V\otimes\mat{N}{\gf}}^{\gl{N}{\gf}} \subset \Pois{V\otimes\mat{N}{\gf}} \]
are maps of shifted Poisson algebras.
\end{lemma}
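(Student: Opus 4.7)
The plan is to deduce this lemma as a formal consequence of three ingredients already in place: Lemma \ref{lem_Moritamapbialgebras} (that $\Mor$ is a map of Lie bialgebras), the content of \eqref{eqn_PoisHamPoispoly} and \eqref{eqn_PoisHamPois} (that $\sigma_P$ and $\widehat{\sigma}_P$ are maps of shifted Poisson algebras), and the $\gl{N}{\gf}$-invariance of the multilinear trace forms \eqref{eqn_traceproduct}.

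First I would upgrade $\Mor$ from a Lie bialgebra map to a map of shifted Poisson algebras $\Mor_P \colon \sym(\NCHampoly{V}) \to \sym(\NCHampoly{V \otimes \mat{N}{\gf}})$. The key observation is a standard universal property: any map of Lie algebras $f \colon \mathfrak{g} \to \mathfrak{h}$ of odd degree induces a map $\sym(f) \colon \sym(\mathfrak{g}) \to \sym(\mathfrak{h})$ of the associated shifted Poisson algebras, where on each side the Poisson bracket is obtained from the Lie bracket by Leibniz extension. One need only check that $\sym(f)$ preserves the bracket on generators, where it agrees with $f$, and then extend multiplicatively. Applied to $\Mor$, this yields \eqref{eqn_Moritapoisson}. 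Note that only the Lie algebra part of the bialgebra structure is used here; the cobracket compatibility recorded in Lemma \ref{lem_Moritamapbialgebras} is what will matter when one quantizes along $\nabla$ in later sections, but at the classical Poisson level it is not needed.

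Second, since $\sigma_P$ is a map of shifted Poisson algebras and the composition of Poisson maps is again a Poisson map, $\sigma_P \circ \Mor_P$ is a map of shifted Poisson algebras. To verify that its image lands in the $\gl{N}{\gf}$-invariants, recall that $\Mor$ is built on generators by tensoring a cyclic word $a_1 \cdots a_k \in \NCHampoly{V}$ with the trace form $t_k$ of \eqref{eqn_traceproduct}; since $\tr$ is invariant under conjugation, each such tensor is $\gl{N}{\gf}$-invariant in $\NCHampoly{V \otimes \mat{N}{\gf}}$. The map $\sigma_P$, being built from the quotient \eqref{eqn_HamPoispoly} and extended multiplicatively, is equivariant for the residual $\gl{N}{\gf}$-action and so preserves this invariance. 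Hence the composite lands in $\Poispoly{V \otimes \mat{N}{\gf}}^{\gl{N}{\gf}}$ as claimed.

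The completed version $\widehat{\sigma}_P \circ \widehat{\Mor}_P$ is handled by the identical argument with direct sums replaced by products throughout; no continuity subtlety arises because $\widehat{\Mor}$ does not collapse any formal variable and the completion is performed symmetrically on source and target (unlike the situation flagged in the remark after \eqref{eqn_PoisHamPois}). The real work of the lemma is thus all packaged into Lemma \ref{lem_Moritamapbialgebras}, whose proof is deferred to Section \ref{Phi map on matrices}; once that deeper input is granted, the present lemma is a purely formal corollary, and I expect no technical obstacles beyond carefully tracking signs through the Koszul conventions.
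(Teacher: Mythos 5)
Your proposal is correct and follows essentially the same route as the paper: the paper also deduces the lemma from Lemma \ref{lem_Moritamapbialgebras}, the universal property of $\sym$ giving a Poisson map from a Lie algebra map, and the $\gl{N}{\gf}$-invariance of the trace forms \eqref{eqn_traceproduct}, with no separate proof given beyond the preceding discussion. Your remark that only the Lie-algebra half of the bialgebra structure is actually used at this classical stage is accurate and a slight sharpening of the paper's citation, though it does not change the substance of the argument.
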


We interpret these maps as saying that the noncommutative theory arising from $V$ encodes the conjugation-invariant piece of the commutative theory arising from~$V \otimes \mat{N}{\gf}$ for every rank~$N$.
In other words, it captures something common to all such theories.

It is natural to ask whether this relationship lifts to the quantizations (of both kinds), which indeed it does. Denote by $\Mor_{\hbar}$ the $\hbar$-linear extension of $\Mor_P$ to $\NCBVpoly{V}$ and likewise for $\widehat{\Mor}_P$.

\begin{theorem} \label{thm_matrixdglamap}
The maps
\[ \sigma_\hbar \circ \Mor_{\hbar}: \NCBVpoly{V} \longrightarrow \ComBVpoly{V\otimes\mat{N}{\gf}}^{\gl{N}{\gf}} \subset \ComBVpoly{V\otimes\mat{N}{\gf}} \]
and
\[ \widehat{\sigma}_\hbar \circ \widehat{\Mor}_{\hbar}: \NCBV{V} \longrightarrow \ComBV{V\otimes\mat{N}{\gf}}^{\gl{N}{\gf}} \subset \ComBV{V\otimes\mat{N}{\gf}} \]
are maps of differential graded Lie algebras.
\end{theorem}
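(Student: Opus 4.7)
The plan is to factor the desired map as $\sigma_\hbar \circ \Mor_\hbar$ (respectively $\widehat{\sigma}_\hbar \circ \widehat{\Mor}_\hbar$) and show that each factor is already a map of differential graded Lie algebras. That the image lands in the $\gl{N}{\gf}$-invariants is essentially automatic: the multilinear trace forms \eqref{eqn_traceproduct} used to build $\Mor$ are conjugation-invariant, $\sigma_P$ is a map of graded commutative algebras, and the bracket, multiplication, and BV Laplacian on $\csym\Sigma\gl{N}{A}^*$ are all $\gl{N}{\gf}$-equivariant by Lemma \ref{lem_BVinvariant}, so the composite sends everything into the invariant sub-dg-Lie algebra.

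Since Proposition \ref{first quantum sigma} already handles $\sigma_\hbar$ and $\widehat{\sigma}_\hbar$, the main task is to show that $\Mor_\hbar : \NCBVpoly{V} \to \NCBVpoly{V \otimes \mat{N}{\gf}}$ is a map of differential graded Lie algebras, and likewise for $\widehat{\Mor}_\hbar$. Bracket compatibility is the quickest part: by Lemma \ref{lem_Moritamapbialgebras}, $\Mor$ preserves the odd Lie bracket on $\NCHampoly{V}$, hence its induced map $\Mor_P$ preserves the Poisson bracket on $\sym(\NCHampoly{V})$ obtained by the Leibniz rule, and $\hbar$-linear extension is harmless. For the differential $\hbar(\delta + \nabla)$, I would treat the two pieces separately. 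The Chevalley-Eilenberg differential $\delta$ is determined entirely by the Lie bracket on $\NCHampoly{V}$ (as a coderivation on a cofree cocommutative coalgebra), so bracket preservation forces $\delta$-compatibility on the nose. The piece $\nabla$ is the derivation extension of the cobracket on $\NCHampoly{V}$; because $\Mor$ preserves that cobracket (the second half of the bialgebra statement in Lemma \ref{lem_Moritamapbialgebras}) and $\Mor_P$ preserves products, $\Mor_P$ commutes with $\nabla$ as well. Combining, $\Mor_\hbar$ commutes with $\hbar(\delta + \nabla)$, and composing with $\sigma_\hbar$ yields the first claim.

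The completed version $\widehat{\sigma}_\hbar \circ \widehat{\Mor}_\hbar$ proceeds by the same argument, with one small continuity remark: because the trace forms $t_k$ preserve tensor arity, $\Mor$ sends cyclic tensors of order $k$ to cyclic tensors of order $k$ and therefore extends continuously to the inverse-limit topology defining $\NCHam{V}$ and $\NCBV{V}$, after which the bialgebra verification is identical.

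The only substantive obstacle is Lemma \ref{lem_Moritamapbialgebras} itself, in particular the cobracket-preserving half, which depends on the normalization $\nu \mapsto N\nu$ that promotes $\Mor$ from a Lie algebra map into a Lie bialgebra map; its proof is deferred to Section \ref{Phi map on matrices}. Granted that lemma, Theorem \ref{thm_matrixdglamap} becomes a formal compatibility check between $\hbar$-linear extension, the symmetric algebra functor, and the composition of dg Lie algebra maps, with no further calculation of signs or structure constants required.
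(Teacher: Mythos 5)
Your proposal is correct and follows essentially the same route as the paper: the paper's proof likewise reduces the theorem to Proposition \ref{first quantum sigma} for $\sigma_\hbar$ and to Lemma \ref{lem_Moritamapbialgebras} for $\Mor_\hbar$ intertwining $\hbar(\delta+\nabla)$, with the landing in $\gl{N}{\gf}$-invariants coming from the invariance of the trace forms \eqref{eqn_traceproduct}. Your added explanation of why the bialgebra property of $\Mor$ forces compatibility with both $\delta$ and $\nabla$ simply spells out what the paper leaves implicit.
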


\begin{proof}
It remains only to check that these are cochain maps.
As we know $\sigma_\hbar$ is a cochain map, we only need to verify that $\Mor_{\hbar}$ intertwines the differentials, which follows directly from Lemma \ref{lem_Moritamapbialgebras}.
\end{proof}

Similarly, there is the following result for the second quantization, whose proof we also defer to Section \ref{Phi map on matrices}.
Consider the $\gamma$-linear extension of the map \eqref{eqn_Moritapoisson} to the complete space $\NonComBV{V}$ and denote this map by
\begin{equation} \label{eqn_OTFTmatrixdglamap}
\widehat{\Mor}_{\gamma,\nu}:\NonComBV{V} \longrightarrow \NonComBV{V\otimes \mat{N}{\gf}}.
\end{equation}

\begin{theorem} \label{thm_OTFTmatrixdglamap}
The map $\widehat{\Mor}_{\gamma,\nu}$ is a map of differential graded Lie algebras. Consequently the composition
\[ \sigma_K\circ\widehat{\Mor}_{\gamma,\nu}:\NonComBV{V}\longrightarrow\ComBV{V\otimes\mat{N}{\gf}}^{\gl{N}{\gf}} \]
is also a map of differential graded Lie algebras.
\end{theorem}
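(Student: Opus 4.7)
The plan is to observe that the second claim follows immediately from the first: we already have that $\sigma_K$ is a map of differential graded Lie algebras, so its composition with a DGLA map is a DGLA map. Hence the real work is to verify that $\widehat{\Mor}_{\gamma,\nu}$ respects the Lie bracket and commutes with the differential $\Delta_K = \nabla + \gamma\,\delta$. By the very definition of $\widehat{\Mor}_{\gamma,\nu}$ as the $\gamma$-linear extension of $\Mor_P$ (which in turn is the free symmetric-algebra extension of $\Mor$), and because $\Delta_K$ is itself $\gamma$-linear with both $\nabla$ and $\delta$ arising as canonical extensions of operations on $\NCHam{V}$, I would first factor the problem through the base ring: it suffices to show that the induced map $\csymp(\NCHam{V}) \to \csymp(\NCHam{V\otimes\mat{N}{\gf}})$ respects the Poisson bracket and commutes with each of $\nabla$ and $\delta$ separately.

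For the Poisson bracket, I would invoke Lemma \ref{lem_Moritamapbialgebras} to know that $\Mor$ preserves the Lie bracket on $\NCHam{V}$. Since the Poisson bracket on $\csymp(\NCHam{V})$ is defined by the Leibniz rule from the Lie bracket on the generators, any map of Lie algebras between the generators extends uniquely to a map of shifted Poisson algebras, and this is precisely $\widehat{\Mor}_P$. The $\gamma$-linear extension then tautologically preserves the bracket of $\NonComBV{V}$, which is itself $\gamma$-linear.

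For the commutation with $\delta$ and $\nabla$, I would apply the same universal-property reasoning. The Chevalley--Eilenberg differential $\delta$ is by construction the unique coderivation on $\csymp(\NCHam{V})$ (viewed as a cocommutative coalgebra) extending the Lie bracket on $\NCHam{V}$; since $\Mor$ preserves the bracket (again Lemma \ref{lem_Moritamapbialgebras}), its symmetric-algebra extension intertwines these Chevalley--Eilenberg differentials. Dually, $\nabla$ is the unique derivation on $\csymp(\NCHam{V})$ extending the cobracket on $\NCHam{V}$; since $\Mor$ preserves the cobracket (the cobracket half of Lemma \ref{lem_Moritamapbialgebras}), the extension intertwines these $\nabla$'s as well. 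Adding the two with the appropriate $\gamma$-weight then shows $\widehat{\Mor}_{\gamma,\nu}$ commutes with $\Delta_K$.

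The main obstacle in this whole argument is really not in the theorem itself but in Lemma \ref{lem_Moritamapbialgebras}, whose bialgebra content — in particular the cobracket compatibility, with its subtle sign bookkeeping under the Koszul rule and its interaction with the rescaling $\nu \mapsto N\nu$ — is deferred to Section~\ref{Phi map on matrices}. Once that lemma is in hand, the present theorem is a formal consequence of the universal properties of free symmetric-algebra extensions for derivations and coderivations, together with the already-established DGLA structure on $\sigma_K$. I would phrase the proof accordingly as a two-line reduction to Lemma \ref{lem_Moritamapbialgebras} plus the observation that $\Delta_K$ is constructed by extending the Lie bialgebra structure on $\NCHam{V}$.
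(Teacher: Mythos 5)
Your formal reduction is fine as far as it goes: the bracket, the Chevalley--Eilenberg differential $\delta$, and the derivation extension of $\nabla$ on $\csymp(\NCHam{V})$ are all determined by the Lie bialgebra structure on $\NCHam{V}$, so a Lie bialgebra map of the generators does extend to a map of differential graded Lie algebras after $\gamma$-linear extension, and the second assertion does follow from the already-established fact that $\sigma_K$ is a DGLA map (together with the observation that the image lands in the $\gl{N}{\gf}$-invariants, which form a sub-DGLA). The problem is the step you lean on. The only proof the paper gives of Lemma \ref{lem_Moritamapbialgebras} is as a \emph{corollary} of the very theorem you are proving: in Section \ref{Phi map on matrices} the cobracket compatibility of $\Mor$ is deduced by restricting the DGLA morphism $\widehat{\Mor}_{\gamma,\nu}$ to $\NCHam{V}$ and using that it commutes with $\nabla+\gamma\delta$. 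So citing the lemma as given and deferring its ``bialgebra content'' makes your argument circular within the paper's logic. Moreover, that deferred content is precisely where all the mathematics lives: the compatibility of $\Mor$ with the cobracket, and in particular the normalization $\Mor(\nu)=N\nu$, comes from the fact that contracting matrix indices in a cyclic word produces factors of $\tr(\mathrm{Id})=N$ exactly when the cobracket creates an empty cyclic word. Your proposal never supplies this computation, so as written it proves nothing beyond the (easy) formal bookkeeping.

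To close the gap you would either have to prove the cobracket half of Lemma \ref{lem_Moritamapbialgebras} directly --- a trace computation of the type carried out in \eqref{eqn_calcOTFTnulltensor}, keeping track of the $N$-factors that match the rescaling of $\nu$ --- or follow the paper's actual route, which goes in the opposite direction: one computes the open topological field theory tensors for the Frobenius algebra $\mat{N}{\gf}$ and finds, in \eqref{eqn_calcOTFTtensor}, that $\mu^{g,b}_{k_1,\ldots,k_m} = N^b\,\tr(\cdot)\cdots\tr(\cdot)$, so that $\widehat{\Mor}_{\gamma,\nu}$ coincides with the map $\Phi_{\mat{N}{\gf}}$, which is a morphism of differential graded Lie algebras by Theorem \ref{thm_OTFTdglamap} (quoted from \cite{hamNCBVform} for an arbitrary Frobenius algebra). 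The lemma is then obtained as a consequence, not an input. Either repair is acceptable, but some version of the explicit matrix/trace calculation is unavoidable; it cannot be absorbed into a universal-property argument.
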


\subsection{The quantized Loday-Quillen-Tsygan theorems}

We now extend the constructions just developed to the much more interesting setting of cyclic $\ai$- and $\li$-algebras.

Recall from \eqref{eqn_Hochbracketdiff} that for a cyclic $\ai$-algebra $A$ whose inner product is odd, we have
\[ \CycHoch{A} = \left(\NCHamP{\Sigma A}, -\{\tilde{m},-\}\right) \]
where $\tilde{m}\in\NCHamP{\Sigma A}$ represents the cyclic $\ai$-structure.
If we consider the underlying symplectic vector space $V = \Sigma A$ and ignore the $\ai$-structures,
then $\NCHamP{\Sigma A} = \NCHamP{V}$ is precisely what we've been working with so far.
Similarly, by \eqref{eqn_CEbracketdiff}, we know that for a cyclic $\li$-algebra $\mathfrak{g}$ with odd inner product,
\[ \ChEilp{\mathfrak{g}} =\left(\csymp\Sigma\mathfrak{g}^*,-\{\tilde{l},-\}\right), \]
where $\tilde{l}\in\csymp\Sigma\mathfrak{g}^*$ represents the cyclic $\li$-structure.
If we consider the underlying symplectic vector space $V = \Sigma \mathfrak{g}$,
then $\csym\Sigma\mathfrak{g}^* = \Pois{V}$ is also precisely what we've been working with so far.
Recall from Definition \ref{def_commutatoralgebra} that every cyclic $\ai$-algebra determines a commutator cyclic $\li$-algebra.

\begin{rem}
Note here that the completed versions are used, namely $\NCHamP{\Sigma A}$ and $\csymp\Sigma\mathfrak{g}^*$, because an arbitrary $\ai$- or $\li$-algebra has infinitely many operations and so the product of cyclic or symmetric powers is needed, rather than the direct sum.
If the algebra has only finitely many nontrivial operations, however, one can work with $\NCHamPpoly{\Sigma A}$ and $\symp\Sigma\mathfrak{g}^*$.
Here we will state the maps in the completed setting, but the reader can formulate the uncompleted versions as needed.
\end{rem}

The Lie algebra morphism $\sigma$ defined by \eqref{eqn_HamPoispoly} induces a map between the corresponding Maurer-Cartan sets; that is to say, following the discussion in Section \ref{sec_reltocyclicstruct}, that it maps cyclic $\ai$-structures to cyclic $\li$-structures. This is just the commutator algebra construction of Definition \ref{def_commutatoralgebra}; that is to say that $\tilde{l}:=\sigma(\tilde{m})$ is the commutator cyclic $\li$-structure associated to $\tilde{m}$.

Similarly, given a cyclic $\ai$-algebra $A$, the cyclic $\ai$-structures $\tilde{m}_A$ and $\tilde{m}_{\mat{N}{A}}$ on $A$ and $\mat{N}{A}$ respectively (cf. Definition \ref{def_ainfmatrices}) correspond under the maps $\Mor$ and $\Res$ of Theorem~\ref{thm_moritainvariance}:
\[ \Mor(\tilde{m}_A) = \tilde{m}_{\mat{N}{A}}, \qquad \Res(\tilde{m}_{\mat{N}{A}}) = \tilde{m}_{A}. \]

In light of this, turning on a nontrivial $\ai$-structure $\tilde{m}_A$ on $A$ with $V = \Sigma A$, one finds that the maps constructed in Theorems~\ref{thm_matrixdglamap} and~\ref{thm_OTFTmatrixdglamap} are extensions of the Loday-Quillen-Tsygan map of Section~\ref{sec_LQTthm}. More precisely, $\NCHam{V}$ sits inside both of the Lie algebras $\NCBV{V}$ and $\NonComBV{V}$ as a Lie subalgebra. Hence the adjoint map $\{\tilde{m}_A,-\}$ determines a differential on them. Similarly, if $\tilde{l}_{\gl{N}{A}}$ denotes the commutator $\li$-structure on matrices, it follows from Lemma \ref{lem_BVinvariant}\eqref{item_BVinvariantidentities2} that $\{\tilde{l}_{\gl{N}{A}},-\}$ defines a differential on $\ComBV{\Sigma\gl{N}{A}}^{\gl{N}{\gf}}$ since $\tilde{l}_{\gl{N}{A}}$ is $\gl{N}{\gf}$-invariant.

We have the following refinements of Theorems \ref{thm_matrixdglamap} and \ref{thm_OTFTmatrixdglamap}, explaining how these theorems may be viewed as quantizations of the Loday-Quillen-Tsygan Theorem.

\begin{theorem}\label{thm hbar LQT}
Given a (not necessarily unital) cyclic $\ai$-algebra $A$ whose inner product is odd and an integer $N \geq 1$, the map
\[ \widehat{\sigma}_\hbar \circ \widehat{\Mor}_{\hbar}: \NCBV{\Sigma A} \to \ComBV{\Sigma\gl{N}{A}}^{\gl{N}{\gf}} \]
not only intertwines the differentials and Lie brackets defined in Sections \ref{sec_commBV} and \ref{sec_relcomnoncom}, but also the differentials $\{\tilde{m}_A,-\}$ and $\{\tilde{l}_{\gl{N}{A}},-\}$ defined above. Modulo $\hbar$ and $\nu$, this map becomes the Loday-Quillen-Tsygan map of Theorem~\ref{thm_LQT}.
\end{theorem}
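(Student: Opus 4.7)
The compatibility concerning the base differentials $\hbar(\delta+\nabla)$ and $\hbar\Delta$, and the Lie brackets of the quantizations, is precisely the content of Theorem \ref{thm_matrixdglamap} in its hatted (i.e.~completed) form, so nothing new is needed there. What remains is to intertwine the ``$\ai$/$\li$-structure'' differentials and to identify the reduction mod~$\hbar$ and $\nu$ with the LQT map.

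For the intertwining of $\{\tilde{m}_A,-\}$ with $\{\tilde{l}_{\gl{N}{A}},-\}$, set $\Phi := \widehat{\sigma}_\hbar \circ \widehat{\Mor}_\hbar$. The element $\tilde{m}_A$ sits inside $\NCBV{\Sigma A}$ in symmetric degree one with no factors of $\nu$ or $\hbar$, so both $\widehat{\Mor}_\hbar$ and $\widehat{\sigma}_\hbar$ act on it via their unquantized, single-word pieces. This gives
\[
\Phi(\tilde{m}_A) \;=\; \widehat{\sigma}\bigl(\widehat{\Mor}(\tilde{m}_A)\bigr) \;=\; \widehat{\sigma}\bigl(\tilde{m}_{\mat{N}{A}}\bigr) \;=\; \tilde{l}_{\gl{N}{A}},
\]
where the middle equality is the Morita-covariance $\Mor(\tilde{m}_A)=\tilde{m}_{\mat{N}{A}}$ recalled just before the theorem, and the last equality is the fact that $\widehat\sigma$ restricted to the symmetric-degree-one component is exactly the cyclic-to-symmetric quotient underlying the commutator construction of Definition \ref{def_commutatoralgebra}. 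Since $\Phi$ is a morphism of Lie algebras, for every $x \in \NCBV{\Sigma A}$ we then obtain
\[
\Phi\bigl(\{\tilde{m}_A, x\}\bigr) \;=\; \{\Phi(\tilde{m}_A),\Phi(x)\} \;=\; \{\tilde{l}_{\gl{N}{A}},\Phi(x)\},
\]
which is the desired compatibility.

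For the reduction modulo $\hbar$ and $\nu$, killing $\nu$ eliminates the constant terms inside each tensor factor of $\NCHam{\Sigma A}$, so the reduced source is (a completion of) $\sym(\NCHamP{\Sigma A}) = \sym(\CycHoch{A})$, carrying the derivation extension of the Hochschild differential $-\{\tilde{m}_A,-\}$. Killing $\hbar$ removes the BV Laplacian from the target differential, leaving $-\{\tilde{l}_{\gl{N}{A}},-\}$ acting on $\csym \Sigma\gl{N}{A}^*$, i.e.~the Chevalley--Eilenberg complex $\ChEilp{\gl{N}{A}}$; by Lemma \ref{lem_shiftpoismap} the image lies in the $\gl{N}{\gf}$-invariants. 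Unpacking the reduced $\widehat\sigma\circ\widehat{\Mor}$ factor by factor then recovers the three successive arrows in the LQT composite of diagram \eqref{eqn_LQTinvlim}: $\widehat{\Mor}$ Morita-embeds each cyclic factor using \eqref{eqn_traceproduct}; the cyclic-to-symmetric quotient implicit in $\widehat{\sigma}$ on a single factor is the comparison \eqref{eqn_cyctoCE}; and the multiplication of symmetric factors that $\widehat{\sigma}$ implements on a symmetric product is the final multiplication arrow into $\ChEilp{\gl{N}{A}}$.

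The main obstacle I anticipate is a careful bookkeeping of the combinatorial normalizations arising from the isomorphisms between invariants and coinvariants recalled in Section \ref{sec_reltocyclicstruct}, which involve factors of $\tfrac{1}{k}$ for cyclic invariants and $\tfrac{1}{k!}$ for symmetric ones, together with the accompanying coefficients in the cobracket $\nabla$ of \eqref{eqn_noncomcobracket}. These feed into the precise presentations of $\tilde{m}_A$, $\tilde{l}_{\gl{N}{A}}$, and the maps $\widehat{\sigma}_\hbar$, $\widehat{\Mor}_\hbar$; one must check that $\sigma(\tilde{m}_{\mat{N}{A}})$ equals $\tilde{l}_{\gl{N}{A}}$ on the nose, and that the reduced map agrees with the LQT composite at every symmetric degree without any unaccounted scalar.
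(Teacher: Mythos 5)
Your proposal is correct and follows essentially the same route as the paper: the paper's proof likewise takes the Lie-morphism property from Theorem \ref{thm_matrixdglamap} as given and deduces the compatibility of $\{\tilde{m}_A,-\}$ with $\{\tilde{l}_{\gl{N}{A}},-\}$ from the chain $\widehat{\Mor}_{\hbar}(\tilde{m}_A)=\tilde{m}_{\mat{N}{A}}$ and $\widehat{\sigma}_\hbar(\tilde{m}_{\mat{N}{A}})=\tilde{l}_{\gl{N}{A}}$, exactly as you do. Your closing discussion of the reduction modulo $\hbar$ and $\nu$ and the normalization bookkeeping merely spells out what the paper leaves implicit from the surrounding discussion, so nothing essential differs.
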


\begin{rem}
We note that if $A$ has only finitely many nonzero operations (i.e., $\tilde{m}_n = 0$ for all $n$ sufficiently large), then the above may be replaced by the map
\begin{equation}\label{eqn quant lqt poly}
\sigma_\hbar \circ \Mor_{\hbar}: \NCBVpoly{\Sigma A} \to \ComBVpoly{\Sigma\gl{N}{A}}^{\gl{N}{\gf}}.
\end{equation}
\end{rem}

\begin{proof}
A simple proof may be given as follows. For all $x\in\NCBV{\Sigma A}$,
\begin{displaymath}
\begin{split}
\widehat{\sigma}_\hbar\widehat{\Mor}_{\hbar}\{\tilde{m}_A,x\} &= \{\widehat{\sigma}_\hbar\widehat{\Mor}_{\hbar}(\tilde{m}_A),\widehat{\sigma}_\hbar\widehat{\Mor}_{\hbar}(x)\} \\
&= \{\widehat{\sigma}_\hbar(\tilde{m}_{\mat{N}{A}}),\widehat{\sigma}_\hbar\widehat{\Mor}_{\hbar}(x)\} = \{\tilde{l}_{\gl{N}{A}},\widehat{\sigma}_\hbar\widehat{\Mor}_{\hbar}(x)\}.
\end{split}
\end{displaymath}
\end{proof}

Likewise, we have the following.

\begin{theorem}
Given a (not necessarily unital) cyclic $\ai$-algebra $A$ whose inner product is odd and an integer $N \geq 1$, the map
\begin{equation} \label{eqn_LQTdglamaps}
\sigma_K\circ\widehat{\Mor}_{\gamma,\nu}: \NonComBV{\Sigma A}\longrightarrow\ComBV{\Sigma\gl{N}{A}}^{\gl{N}{\gf}}
\end{equation}
not only intertwines the differentials and Lie brackets defined in Sections \ref{sec_commBV} and \ref{sec_relcomnoncom}, but also the differentials $\{\tilde{m}_A,-\}$ and $\{\tilde{l}_{\gl{N}{A}},-\}$ defined above.
\end{theorem}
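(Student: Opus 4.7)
The plan is to mimic the strategy used for Theorem~\ref{thm hbar LQT}, leveraging the fact that once the map is known to be a Lie algebra homomorphism, the intertwining of an adjoint action reduces to evaluating the map on a single Maurer-Cartan element. Let me abbreviate $\Phi := \sigma_K\circ\widehat{\Mor}_{\gamma,\nu}$. Theorem~\ref{thm_OTFTmatrixdglamap} already tells us that $\Phi$ is a map of differential graded Lie algebras with respect to $\Delta_K = \nabla + \gamma\delta$ on the source and $\hbar\Delta$ on the target, and in particular that $\Phi$ respects the brackets $\{-,-\}$. Therefore for any $x\in\NonComBV{\Sigma A}$,
\[ \Phi(\{\tilde{m}_A,x\}) = \{\Phi(\tilde{m}_A),\Phi(x)\}, \]
and the entire claim is reduced to verifying the single identity $\Phi(\tilde{m}_A) = \tilde{l}_{\gl{N}{A}}$.

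The verification of $\Phi(\tilde{m}_A) = \tilde{l}_{\gl{N}{A}}$ proceeds in two stages. First, $\tilde{m}_A$ lies in $\NCHamP{\Sigma A}$, which sits inside $\csymp(\NCHam{\Sigma A})$ in symmetric degree one with no $\gamma$ or $\nu$ factors. Since $\widehat{\Mor}_{\gamma,\nu}$ is the $\gamma$-linear extension of $\Mor_P$, and $\Mor_P$ restricts on the linear piece to the original map $\Mor$, the Morita invariance relation $\Mor(\tilde{m}_A) = \tilde{m}_{\mat{N}{A}}$ recorded after Theorem~\ref{thm_moritainvariance} immediately gives
\[ \widehat{\Mor}_{\gamma,\nu}(\tilde{m}_A) = \tilde{m}_{\mat{N}{A}}. \]
Second, evaluating $\sigma_K$ on $\tilde{m}_{\mat{N}{A}}$ according to the formula in Definition~\ref{def_noncomquantumBV}: the exponent of $\hbar$ is $2i+j+n-1$ with $i=j=0$ and $n=1$, so the weighting is trivial and $\sigma_K(\tilde{m}_{\mat{N}{A}}) = \sigma_P(\tilde{m}_{\mat{N}{A}})$. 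By the discussion at the start of Section~\ref{sec_reltocyclicstruct}, $\sigma_P$ (more precisely the Lie algebra map $\sigma$) sends a cyclic $\ai$-structure to its associated commutator cyclic $\li$-structure, so $\sigma_P(\tilde{m}_{\mat{N}{A}}) = \tilde{l}_{\gl{N}{A}}$, as required.

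The main point requiring care is the bookkeeping of the exponent $2i+j+n-1$ in the definition of $\sigma_K$: one must see that $\tilde{m}_A$ has $n=1$ so the weight collapses to~$1$, which is exactly what makes the formula place $\tilde{m}_A$ in the right position for the Maurer-Cartan matching. Everything else is formal: the target differential on the $\gl{N}{\gf}$-invariants is well-defined because $\tilde{l}_{\gl{N}{A}}$ is $\gl{N}{\gf}$-invariant and Lemma~\ref{lem_BVinvariant}(\ref{item_BVinvariantidentities2}) ensures $\{\tilde{l}_{\gl{N}{A}},-\}$ preserves invariants, while the source differential is well-defined because $\tilde{m}_A$ itself lies in a Lie subalgebra on which $\{-,-\}$ acts. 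Chaining the three displayed equalities yields $\Phi(\{\tilde{m}_A,x\}) = \{\tilde{l}_{\gl{N}{A}},\Phi(x)\}$, completing the proof.
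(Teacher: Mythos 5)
Your proposal is correct and follows essentially the same route as the paper: the paper proves the $\hbar$-version (Theorem~\ref{thm hbar LQT}) by exactly this reduction---use that the map is a morphism of Lie algebras and then check it sends $\tilde{m}_A$ to $\tilde{l}_{\gl{N}{A}}$ via $\Mor(\tilde{m}_A)=\tilde{m}_{\mat{N}{A}}$ and $\sigma(\tilde{m}_{\mat{N}{A}})=\tilde{l}_{\gl{N}{A}}$---and leaves the present statement as the evident ``likewise,'' which is precisely your argument, including the observation that the $\hbar^{2i+j+n-1}$ weight in $\sigma_K$ is trivial on a single cyclic word with $i=j=0$, $n=1$.
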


\begin{rem} \label{rem_BVdiffdeform}
It should be noted that for $\ComBV{\Sigma\gl{N}{A}}$, both $\Delta$ and $\{\tilde{l}_{\gl{N}{A}},-\}$ are differentials on this Lie algebra, but $(\Delta+\{\tilde{l}_{\gl{N}{A}},-\})$ is typically \emph{not}. This will hold only if
\[ \Delta\tilde{l}_{\gl{N}{A}} + \left\{\tilde{l}_{\gl{N}{A}},\tilde{l}_{\gl{N}{A}}\right\} = 0, \]
which in the case above is equivalent to
\begin{equation} \label{eqn_qmecondition}
\Delta\tilde{l}_{\gl{N}{A}}=0
\end{equation}
since the right-hand term already vanishes. Similar remarks apply to both $\NCBV{\Sigma A}$ and $\NonComBV{\Sigma A}$.

There is one particular case where \eqref{eqn_qmecondition} applies, namely when the cyclic $\ai$-structure consists of just a differential $d$ with no multiplication or higher order operations, so that $\tilde{m}_A$ is a purely quadratic term of even degree. In this case we can deform the BV-Laplacian on $\ComBV{\Sigma\gl{N}{A}}$ to $(d+\hbar\Delta)$. Again, similar remarks apply to both $\NCBV{\Sigma A}$ and $\NonComBV{\Sigma A}$.
\end{rem}

\subsection{Map of differential graded Lie algebras}

In this section we recall from \cite{hamNCBVform} the construction of a map of differential graded Lie algebras that appears in the noncommutative BV formalism and arises from a two-dimensional open topological field theory. The latter entities are known to correspond to Frobenius algebras. We will use this construction to give a proof of Lemma \ref{lem_Moritamapbialgebras} and Theorem \ref{thm_OTFTmatrixdglamap} by applying this construction to the Frobenius algebra $\mat{N}{\gf}$ of $N\times N$ matrices. It is in this case that we will see a remarkable simplification in the structure maps of our open topological field theory which will ultimately allow us to connect them to the maps \eqref{eqn_LQTinvlim} that appear in the Loday-Quillen-Tsygan Theorem.

Let $M$ be a Frobenius algebra which, for our purposes, we assume to be concentrated in degree zero. Let $\innprod$ denote the symmetric inner product on $M$ and
\[ \innprod^{-1} = x_i\otimes y^i \in M\otimes M \]
be the inverse inner product, where the repeated index $i$ indicates a summation. We will define a family of tensors,
\[ \mu^{g,b}_{k_1,\ldots,k_m}:M^{\otimes k_1}\otimes\cdots\otimes M^{\otimes k_m} \to \gf; \qquad g,b\geq 0, k_1,\ldots k_m >0. \]
The tensor $\mu^{g,b}_{k_1,\ldots,k_m}$ is the structure map associated, by the open topological field theory determined by $M$, to a genus $g$ surface with $b$ free boundary components and $m$ remaining boundary components, each of which respectively contain $k_i$ parameterized embedded intervals. We will provide below a concrete description of these tensors, taken from \cite{hamNCBVform}. For this reason, we do not labor to explain precisely what is meant by the term `open topological field theory' in this context, since a precise formulation in terms of modular operads is provided in \cite{chulazOTFT} as well as \cite{hamNCBVform} based on the Atiyah-Segal axioms \cite{atiyahTQFT}, and since it will be sufficient for our purposes here to merely recite the results of \cite{hamNCBVform}. Nonetheless, we feel obliged to explain the conceptual origins of these tensors.

We begin by defining
\begin{multline} \label{eqn_OTFTnulltensor}
\mu^{0,0}_{k_1,\ldots,k_m}\left(c_{11},\ldots c_{1k_1};\ldots;c_{m1},\ldots,c_{mk_m}\right) := \\
t_m\left(x_{i_m},\ldots,x_{i_1}\right)t_{k_1+\cdots+k_m+m}\left(y^{i_1},c_{11},\ldots c_{1k_1},\ldots,y^{i_m},c_{m1},\ldots,c_{mk_m}\right),
\end{multline}
where the repeated indices indicate a summation and the multilinear form $t_k$ is defined by
\[ t_k:M^{\otimes k}\to\gf, \qquad t_k(c_1,\ldots c_k) = \langle c_1\cdots c_{k-1},c_k \rangle. \]
Next we define maps $\beta,\gamma:M\to M$ by
\begin{equation} \label{eqn_OTFTfreebdry}
\beta(c):= x_i y^i c
\end{equation}
and
\begin{equation} \label{eqn_OTFTfreegenus}
\gamma(c):= x_i x_j y^i y^j c.
\end{equation}
Finally we may define
\begin{equation} \label{eqn_OTFTtensor}
\mu^{g,b}_{k_1,\ldots,k_m}\left(c_{11},\ldots c_{1k_1};\ldots;c_{m1},\ldots,c_{mk_m}\right) := \mu^{0,0}_{k_1,\ldots,k_m}\left(\beta^b\gamma^g(c_{11}),\ldots c_{1k_1};\ldots;c_{m1},\ldots,c_{mk_m}\right).
\end{equation}
In fact formula \eqref{eqn_OTFTtensor} remains unchanged regardless of which arguments the maps $\beta$ and $\gamma$ are applied to.

Given a symplectic vector space $V$ whose symplectic form has odd degree, we use these tensors to define a map
\[ \Phi_M:\NonComBV{V}\to\NonComBV{V\otimes M}. \]
This map is defined on each subspace $\gamma^g\nu^b\csym^m\left(\NCHamP{V}\right)\subset\NonComBV{V}$ by the commutative diagram
\[ \xymatrix{V^{*\otimes k_1}\otimes\cdots\otimes V^{*\otimes k_m} \ar[rrr]^-{-\otimes\mu^{g,b}_{k_1,\ldots,k_m}} \ar[dd] &&& (V^{*\otimes k_1}\otimes\cdots\otimes V^{*\otimes k_m})\otimes(M^{*\otimes k_1}\otimes\cdots\otimes M^{*\otimes k_m}) \ar@{=}[d] \\ &&& (V\otimes M)^{*\otimes k_1}\otimes\cdots\otimes (V\otimes M)^{*\otimes k_m} \ar[d] \\ \gamma^g\nu^b\csym^m\left(\NCHamP{V}\right) \ar[rrr]^{\Phi_M} &&& \gamma^g\nu^b\csym^m\left(\NCHamP{V\otimes M}\right) } \]
where the vertical maps are the canonical quotient maps, multiplied by the factor $\gamma^g\nu^b$.

The following result is a direct consequence of the axioms for an open topological field theory; cf. \cite{hamNCBVform}, Theorem 5.1.

\begin{theorem} \label{thm_OTFTdglamap}
The map $\Phi_M$ is a well-defined map of differential graded Lie algebras.
\end{theorem}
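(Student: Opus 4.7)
My plan is to verify in sequence three properties: that $\Phi_M$ is well-defined on the quotient spaces appearing in $\NonComBV{V}$ and $\NonComBV{V\otimes M}$; that it intertwines the Lie brackets; and that it intertwines the differentials $\Delta_K = \nabla + \gamma\,\delta$. The organizing principle will be that each algebraic operation on $\NonComBV{-}$ corresponds to a topological gluing of the surfaces indexing the tensors $\mu^{g,b}_{k_1,\ldots,k_m}$, and that the OTFT axioms for $M$ then translate that correspondence into the required identities of tensors.

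For well-definedness, I would reduce the claim to showing that $\mu^{g,b}_{k_1,\ldots,k_m}$ is cyclically invariant in each block of $k_i$ arguments and symmetric in the $m$ blocks. In the case $g=b=0$ both invariances follow directly from \eqref{eqn_OTFTnulltensor} using the cyclic symmetry of the forms $t_k$; for general $g,b$ I would use the fact, noted after \eqref{eqn_OTFTtensor}, that one may freely move $\beta$ and $\gamma$ between arguments.

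For bracket compatibility, the task reduces to the identity that the Lie bracket on $\NonComBV{V}$, which contracts one $V^*$ entry from each of two cyclic words using the inverse pairing on $V$, matches under $\Phi_M$ the analogous bracket on $\NonComBV{V\otimes M}$, whose contraction uses the inverse pairing on $V\otimes M$, namely the tensor product of the inverse pairings on $V$ and on $M$. Topologically this is the statement that gluing two boundary intervals of two distinct genus-zero surfaces produces the structure tensor for the joined surface, a basic OTFT gluing axiom and the source of the extra $M$-factor. For the differential, I would split $\Delta_K$ into the two pieces $\delta$ (joining two cyclic words along internal entries, lowering symmetric degree) and $\nabla$ (self-gluing one cyclic word, raising symmetric degree). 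Both correspond to internal gluings of intervals on surfaces; the self-gluing $\nabla$ is where the operators $\beta$ and $\gamma$ appear, with $\beta$ producing a new free boundary component (tracked by $\nu$) and $\gamma$ producing a new handle (tracked by $\gamma$), which is the content of \eqref{eqn_OTFTfreebdry} and \eqref{eqn_OTFTfreegenus}.

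The main obstacle will be the last step: accounting carefully for how a single self-gluing of a cyclic word decomposes into the three geometric cases (splitting off a new cyclic component, creating a free boundary, or raising the genus by one) and matching the resulting combinatorial factors and Koszul signs against the weights $\gamma^g\nu^b$ carried on the $\NonComBV{-}$ side. This is the gluing axiom for the modular operad of surfaces with both free and parametrized boundary components, and it is exactly where the serious combinatorial work of \cite{hamNCBVform} is concentrated. Once the identity $\Phi_M \circ \Delta_K = \Delta_K \circ \Phi_M$ has been verified on elementary generators, the full result will follow by extending multiplicatively, so my main effort would concentrate on that combinatorial match.
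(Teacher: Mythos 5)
The paper itself does not give a proof of this statement---the sentence immediately preceding it simply cites Theorem 5.1 of \cite{hamNCBVform}, where the full argument (including the precise formulation of the OTFT axioms via modular operads, and the combinatorics of self-gluings) is carried out. Your blind plan therefore goes further than the paper's own treatment, and at the level of strategy it matches what the cited source does: each algebraic operation on $\NonComBV{-}$ (the bracket, the cobracket $\nabla$, the Chevalley--Eilenberg differential $\delta$) corresponds to a surface gluing, and the OTFT axioms for the Frobenius algebra $M$ turn that correspondence into identities among the tensors $\mu^{g,b}_{k_1,\ldots,k_m}$. You also correctly identify that the hard part is the bookkeeping of the self-gluing decomposition and that this is where \cite{hamNCBVform} concentrates its effort, so your proposal is an honest sketch rather than a complete proof, exactly as the paper's proof-by-citation is.

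Two points in your sketch need correction. First, you attribute both the genus-raising operator $\gamma$ and the free-boundary operator $\beta$ to the cobracket $\nabla$. In fact, a self-gluing of a single boundary circle never raises genus: the Euler characteristic drops by one while the number of boundary circles increases by one, so $g$ is preserved. Hence $\nabla$ only involves $\beta$ (when one of the two resulting cyclic words is empty, producing $\nu$). It is the term $\gamma\,\delta$---gluing two distinct marked boundary circles of the same connected surface, which drops both $\chi$ and the boundary count by one and therefore raises $g$ by one---that accounts for the operator $\gamma$; this is also why the formal variable $\gamma$ appears in $\Delta_K = \nabla + \gamma\,\delta$. Second, your claim that well-definedness ``follows directly'' from the cyclic symmetry of $t_k$ undersells the task: a cyclic permutation of a single block $c_{r1},\ldots,c_{rk_r}$ is \emph{not} a cyclic permutation of the full argument list of the big form $t_{k_1+\cdots+k_m+m}$ in \eqref{eqn_OTFTnulltensor}, because of the interpolated $y^{i_r}$; one must additionally invoke the Frobenius identities for $M$ together with the summation over the $i_r$-indices to absorb the displaced $y^{i_r}$, and likewise for the symmetry across blocks. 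These are precisely the computations that \cite{hamNCBVform} handles, so flagging them explicitly would make your plan accurate as a roadmap to that proof.
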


\subsection{Calculation for the Frobenius algebra of $N\times N$ matrices}\label{Phi map on matrices}

In this section we will calculate the open topological field theory tensors \eqref{eqn_OTFTtensor} for the Frobenius algebra $M:=\mat{N}{\gf}$ and hence the map $\Phi_M$ of Theorem \ref{thm_OTFTdglamap}. If $E_{ij}$ denotes the matrix whose $ij$th entry is $1$ and all of whose other entries are $0$ then we have the following elementary identities:
\begin{gather*}
E_{ij}E_{j'k} = \delta_{jj'}E_{ik}, \qquad A=\sum_{i,j=1}^N a_{ij} E_{ij}, \qquad \tr (E_{ij}E_{kl}) = \delta_{jk}\delta_{il}; \\
A E_{ij} = \sum_{k=1}^N a_{ki}E_{kj}, \qquad E_{ij} A = \sum_{k=1}^N a_{jk}E_{ik}. \\
\end{gather*}
The inverse inner product on $\mat{N}{\gf}$ is
\[ \innprod^{-1} = \sum_{i,j=1}^n E_{ij}\otimes E_{ji}. \]

We begin by computing the maps $\beta$ and $\gamma$ defined by \eqref{eqn_OTFTfreebdry} and \eqref{eqn_OTFTfreegenus} respectively:
\begin{align}
\label{eqn_calcOTFTfreebdry} \beta(A) &= \sum_{i,j=1}^N E_{ij}E_{ji} A = N\sum_{i=1}^N E_{ii} A = N\sum_{i,k=1}^N a_{ik} E_{ik} = NA, \\
\label{eqn_calcOTFTfreegenus} \gamma(A) &= \sum_{i,j,k,l=1}^N E_{ij}E_{kl}E_{ji}E_{lk} A = \sum_{i,j=1}^N E_{ij}E_{ji}E_{ji}E_{ij} A = \sum_{i=1}^N E_{ii} A = A.
\end{align}
Next we compute the tensor \eqref{eqn_OTFTnulltensor}:
\begin{multline*}
\mu^{0,0}_{k_1,\ldots,k_m}\left(A^{11},\ldots A^{1k_1};\ldots;A^{m1},\ldots,A^{mk_m}\right) = \\
\sum_{i_1,j_1;\ldots;i_m,j_m=1}^N\tr\Big(E_{i_m j_m}\cdots E_{i_1 j_1}\Big)\tr\Big(E_{j_1 i_1}A^{11}\cdots A^{1k_1}\cdots E_{j_m i_m}A^{m1}\cdots A^{mk_m}\Big).
\end{multline*}
For convenience, we introduce the notation $\mathbf{A}^r:=A^{r1}\cdots A^{rk_r}$. The entries of the matrix $\mathbf{A}^r$ will be denoted by $\alpha^r_{ij}$. We continue our calculation:
\begin{multline} \label{eqn_calcOTFTnulltensor}
\mu^{0,0}_{k_1,\ldots,k_m}\left(A^{11},\ldots A^{1k_1};\ldots;A^{m1},\ldots,A^{mk_m}\right) = \ldots \\
= \sum_{i_1,\ldots, i_m=1}^N\tr\Big(E_{i_m i_1}\mathbf{A}^1 E_{i_1 i_2}\mathbf{A}^2\cdots E_{i_{m-2} i_{m-1}}\mathbf{A}^{m-1} E_{i_{m-1} i_m}\mathbf{A}^m\Big), \\
= \sum_{i_1,l_1;\ldots;i_m,l_m=1}^N \alpha^1_{i_1 l_1}\alpha^2_{i_2 l_2}\cdots\alpha^{m-1}_{i_{m-1} l_{m-1}}\alpha^m_{i_m l_m}\tr\left(E_{i_m l_1}E_{i_1 l_2}\cdots E_{i_{m-2} l_{m-1}}E_{i_{m-1} l_m}\right), \\
= \sum_{i_1,\ldots,i_m=1}^N \alpha^1_{i_1 i_1}\alpha^2_{i_2 i_2}\cdots\alpha^{m-1}_{i_{m-1} i_{m-1}}\alpha^m_{i_m i_m} = \tr (\mathbf{A}^1)\cdots \tr (\mathbf{A}^m).
\end{multline}
Combining \eqref{eqn_calcOTFTfreebdry} and \eqref{eqn_calcOTFTfreegenus} with \eqref{eqn_calcOTFTnulltensor} concludes our calculation of the open topological field theory associated to $\mat{N}{\gf}$ and yields
\begin{equation} \label{eqn_calcOTFTtensor}
\mu^{g,b}_{k_1,\ldots,k_m}\left(A^{11},\ldots A^{1k_1};\ldots;A^{m1},\ldots,A^{mk_m}\right) = N^b\tr (A^{11}\cdots A^{1k_1})\cdots \tr (A^{m1}\cdots A^{mk_m}).
\end{equation}

We may now give a proof of Theorem \ref{thm_OTFTmatrixdglamap}.

\begin{proof}[Proof of Theorem \ref{thm_OTFTmatrixdglamap}]
It follows from Equation \eqref{eqn_calcOTFTtensor} that the map $\widehat{\Mor}_{\gamma,\nu}$ coincides with the map $\Phi_{\mat{N}{\gf}}$, which is a morphism of differential graded Lie algebras by Theorem \ref{thm_OTFTdglamap}.
\end{proof}

As a Corollary, we get a proof of Lemma \ref{lem_Moritamapbialgebras}.

\begin{proof}[Proof of Lemma \ref{lem_Moritamapbialgebras}]
The Lie algebra $\NCHam{V}$ sits inside $\NonComBV{V}$ as a Lie subalgebra. The restriction of the Lie algebra morphism $\widehat{\Mor}_{\gamma,\nu}$ to the Lie subalgebra $\NCHam{V}$ is the map
\[ \Mor:\NCHam{V}\to\NCHam{V\otimes\mat{N}{\gf}}, \]
which hence must also be a map of Lie algebras.

To prove that $\Mor$ respects the cobrackets we use the fact that $\widehat{\Mor}_{\gamma,\nu}$ must commute with the differentials on $\NonComBV{V}$ and $\NonComBV{V\otimes\mat{N}{\gf}}$. Applying this to $x\in\NCHam{V}\subset\NonComBV{V}$ we get,
\begin{displaymath}
\begin{split}
\nabla\Mor(x) &= (\nabla+\gamma\delta)\Mor(x) = (\nabla+\gamma\delta)\widehat{\Mor}_{\gamma,\nu}(x) \\
&= \widehat{\Mor}_{\gamma,\nu}(\nabla+\gamma\delta)(x) = \widehat{\Mor}_{\gamma,\nu}\nabla(x) = (\Mor\cotimes\Mor)\nabla(x).
\end{split}
\end{displaymath}
\end{proof}

\section{Random matrices and Hermitian matrix integrals} \label{sec_random}

Having introduced the basic mathematical objects, structures and theorems underlying our approach to studying large $N$ phenomena in the preceding sections of the paper, we are now ready to describe in this section some of the applications of this cohomological framework to Hermitian matrix integrals. We would like to emphasize here that the scope of our inquiries will be essentially limited to explaining the appearance of noncommutative geometry in the large $N$ limit of these matrix models and describing how the family of maps \eqref{eqn_LQTdglamaps} may be used to analyze the dependence of these models upon the rank $N$. For a suitable and particularly simple choice of cyclic $\ai$-algebra, the machinery of the previous section may be employed in the analysis and calculation of certain expectation values in these matrix models. In order to keep the current paper within reasonable limits, we intend that a full account of our results, including a description of the large $N$ asymptotic behavior of these quantities, will appear in a subsequent article.

\subsection{Expected values of multi-trace operators}

Let $\her{N}$ denote the real subspace of $\gl{N}{\mathbb{C}}$ consisting of Hermitian matrices. The trace pairing on $\gl{N}{\mathbb{C}}$ restricts to a positive definite symmetric bilinear form on $\her{N}$. We will be principally interested in the expected value of the multi-trace operator:
\begin{equation} \label{eqn_expvaltrace}
I_{i_1,i_2,\ldots,i_k}^N:=\frac{\int_{\her{N}}\tr (X^{i_1})\tr (X^{i_2})\cdots\tr (X^{i_k})e^{-\frac{1}{2}\tr (X^2)}\d X}{\int_{\her{N}}e^{-\frac{1}{2}\tr (X^2)}\d X}.
\end{equation}
We note that while the integrals in both the numerator and denominator of \eqref{eqn_expvaltrace} depend upon a linear identification of $\her{N}$ with $\mathbb{R}^{(n+1)n/2}$; the ratio, of course, does not.

\subsection{A cohomological approach through the noncommutative Batalin-Vilkovisky formalism}

We now define an extremely simple cyclic $\ai$-algebra which, upon plugging into the map \eqref{eqn quant lqt poly} below Theorem~\ref{thm hbar LQT} and setting $\hbar = 1$, leads to a description of the integrals \eqref{eqn_expvaltrace} appearing in random matrix theory. Consider the two-dimensional complex vector space that is freely spanned by two generators $a$ and $b$ of degrees 1 and 2 respectively. This has a symmetric pairing and a compatible differential;
\begin{equation} \label{eqn_twodimalg}
\langle a,b \rangle = 1, \qquad da=b.
\end{equation}
We may regard this complex as a cyclic $\ai$-algebra $\twodim$ in which the multiplication and all higher homotopies vanish.

Throughout the remainder of this section, we use $\NCBVGauss{\Sigma\twodim}$ and $\ComBVGauss{\Sigma\gl{N}{\twodim}}$ to indicate the complexes $\NCBVpoly{\Sigma\twodim}$ and $\ComBVpoly{\Sigma\twodim}$ modulo the ideal~$(\hbar -1)$. Here we emphasize that we include the differential $d$ from the cyclic $\ai$-algebra $\twodim$, cf. Remark \ref{rem_BVdiffdeform}. We may realize these concretely as BV-algebras as follows. $\ComBVGauss{\Sigma\gl{N}{\twodim}}$ has underlying shifted Poisson algebra $\Poispoly{\Sigma\gl{N}{\twodim}}$ and is equipped with the differential $(d+\Delta)$. $\NCBVGauss{\Sigma\twodim}$ has underlying shifted Poisson algebra $S(\NCHampoly{\Sigma\twodim})$ and is equipped with the differential $(d+\delta+\nabla)$.

The following is a simple and direct consequence of Lemma \ref{lem_shiftpoismap} and Theorem \ref{thm hbar LQT}.

\begin{prop} \label{prop_polyBVmap}
For every positive integer $N$, the map
\begin{equation} \label{eqn_polyBVmap}
\NCBVGauss{\Sigma\twodim} \longrightarrow \ComBVGauss{\Sigma\gl{N}{\twodim}}
\end{equation}
obtained by specializing the map \eqref{eqn quant lqt poly} to $\hbar = 1$ is a map of BV-algebras.
\end{prop}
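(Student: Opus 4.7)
The plan is to derive the proposition as a direct consequence of Lemma \ref{lem_shiftpoismap} and the polynomial form of Theorem \ref{thm hbar LQT}, once we verify that $\hbar = 1$ is a legitimate specialization and that both sides carry BV algebra structures of the claimed form.

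First, I would check that the differentials in the statement are well-defined. Since $\twodim$ has a linear differential and no higher operations, the element $\tilde{m}_\twodim$ is purely quadratic, and its image $\tilde{l}_{\gl{N}{\twodim}}$ under the commutator construction is quadratic as well. The only potentially nonzero term in $\Delta \tilde{l}_{\gl{N}{\twodim}}$ comes from contracting the two linear factors by the inverse pairing, which vanishes because the pairing on $\twodim$ is strictly off-diagonal (matching $a$ with $b$, never $a$ with $a$ or $b$ with $b$). The hypothesis of Remark \ref{rem_BVdiffdeform} is therefore satisfied, so $(d + \Delta)$ on $\Poispoly{\Sigma\gl{N}{\twodim}}$ and $(d + \delta + \nabla)$ on $\sym(\NCHampoly{\Sigma\twodim})$ are genuine differentials of BV algebras after setting $\hbar = 1$; these are precisely the structures named in the statement.

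Second, I would invoke the polynomial version of Theorem \ref{thm hbar LQT} (the remark immediately following it), which gives that the map $\sigma_\hbar \circ \Mor_\hbar: \NCBVpoly{\Sigma\twodim} \to \ComBVpoly{\Sigma\gl{N}{\twodim}}^{\gl{N}{\gf}}$ is a morphism of dg Lie algebras intertwining separately both the quantum differentials $\hbar(\delta + \nabla)$ and $\hbar\Delta$, and the Hamiltonian differentials $\{\tilde{m}_\twodim,-\}$ and $\{\tilde{l}_{\gl{N}{\twodim}},-\}$. Summing and specializing at $\hbar = 1$, the quotient map intertwines the total differentials $(d + \delta + \nabla)$ and $(d + \Delta)$. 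Lemma \ref{lem_shiftpoismap} further ensures that the classical ($\hbar = 0$) map $\sigma_P \circ \Mor_P$ is a morphism of shifted Poisson algebras, hence in particular of graded commutative algebras; the $\hbar$-linear extension and subsequent quotient by $(\hbar - 1)$ preserve this commutative structure.

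Assembling these pieces, the specialized map is a morphism of graded commutative algebras intertwining the BV Laplacians, which is exactly the definition of a BV algebra map. The one step that is not pure bookkeeping from prior sections is the vanishing $\Delta\tilde{l}_{\gl{N}{\twodim}} = 0$; without it, the deformed operator would not square to zero, and the entire BV framework on either side would collapse. In the present setting this vanishing is immediate from the off-diagonal shape of the pairing on $\twodim$, but it is exactly the obstruction one would have to revisit carefully for any variant with nontrivial higher operations.
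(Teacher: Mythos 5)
Your proposal is correct and follows essentially the same route as the paper, whose proof simply cites Lemma \ref{lem_shiftpoismap} and Theorem \ref{thm hbar LQT} (in its polynomial form) and relies on Remark \ref{rem_BVdiffdeform} for the legitimacy of the deformed differentials. Your explicit verification that $\Delta\tilde{l}_{\gl{N}{\twodim}}=0$ (via the off-diagonal pairing, equivalently the parity argument the paper uses for $\Delta\sigma=0$ in Proposition \ref{prop_expval}) is exactly the point the paper delegates to that remark, so nothing is missing.
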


This abstract statement will be unwound to something extremely concrete:
\begin{itemize}
\item for a polynomial $p(x) = \sum a_n x^n$, viewed as a degree zero cocycle in $\NCBVGauss{\Sigma\twodim}$ where $x$ is dual to the generator $a$, its image is the functional $p(X) = \sum a_n \tr(X^n)$ on $X$ a $N \times N$-matrix, and
\item at the level of cohomology, the cohomology class $[p(x)]$ is a polynomial in $\nu$ whose value at $\nu = N$ is the expected value of the operator~$p(X)$.
\end{itemize}
Much of the rest of this section is devoted to demonstrating these claims.

Consider the real subspace $\her{N}$ of $\Sigma\gl{N}{\twodim}$ consisting of Hermitian matrices, which sits in the degree zero part of
\[ \Sigma\gl{N}{\twodim} = \Sigma\twodim\otimes\gl{N}{\mathbb{C}} \]
and corresponds to the generator $a$ of $\Sigma\twodim$ (which has degree zero after the shift). Restricting a polynomial superfunction from $\ComBVGauss{\Sigma\gl{N}{\twodim}}$ to this subspace yields a complex-valued function on $\her{N}$, which we may integrate against the Gaussian measure to define the expected value
\begin{equation} \label{eqn_expvalpoly}
\langle - \rangle: \ComBVGauss{\Sigma\gl{N}{\twodim}} \longrightarrow \mathbb{C}
\end{equation}
by
\[ \langle f \rangle:= \frac{\int_{\her{N}}f(X)e^{-\frac{1}{2}\tr (X^2)}\d X}{\int_{\her{N}}e^{-\frac{1}{2}\tr (X^2)}\d X}. \]

\begin{prop} \label{prop_expval}
The map \eqref{eqn_expvalpoly} is a quasi-isomorphism of complexes whose one-sided inverse is the inclusion of $\mathbb{C}$ inside $\ComBVGauss{\Sigma\gl{N}{\twodim}}$ as the constant polynomials.
\end{prop}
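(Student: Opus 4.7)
The plan is as follows. The one-sided inverse claim is immediate, since $\langle 1 \rangle = 1$ forces the composition of the constant-inclusion with $\langle - \rangle$ to be the identity on $\mathbb{C}$. What is substantive is the quasi-isomorphism claim, which I will split into (a) verifying that $\langle - \rangle$ is a cochain map, and (b) computing $H^*(\ComBVGauss{\Sigma\gl{N}{\twodim}}) = \mathbb{C}$ concentrated in degree $0$ and generated by the class of $1$.

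To set up, observe that as a graded commutative algebra, $\ComBVGauss{\Sigma\gl{N}{\twodim}}$ is $\mathbb{C}[x_{ij}] \otimes \Lambda[\xi_{ij}]$, with even generators $x_{ij}$ (dual to $\Sigma a \otimes E_{ij}$) in cohomological degree $0$ and odd generators $\xi_{ij}$ (dual to $\Sigma b \otimes E_{ij}$) in cohomological degree $-1$. Its differential is $d + \Delta$, where $d$ is the derivation sending $\xi_{ij} \mapsto x_{ij}$ (coming from the $\twodim$-differential) and $\Delta$ is the BV Laplacian built from the combined $\twodim$-pairing and the trace pairing, with $\Delta(x_{ij}\xi_{ji}) = \pm 1$. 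The map $\langle - \rangle$ first restricts to $\her{N}$, setting all $\xi$'s to zero, and then integrates against the Gaussian measure $\mu_N$. For (a), on a monomial $f = p(x) \xi_{ij}$ one computes $(d+\Delta)(f) = p \cdot x_{ij} \pm \partial_{x_{ji}} p$, whose expected value vanishes by the Gaussian integration-by-parts identity $\int_{\her{N}} \partial_{x_{ji}} p \cdot e^{-Q/2} \, dX = \int_{\her{N}} p \cdot x_{ij} \cdot e^{-Q/2} \, dX$ with $Q = \tfrac{1}{2}\tr(X^2)$; the general case follows by the Leibniz rule for $\Delta$.

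For (b), I filter $C = \mathbb{C}[x_{ij}] \otimes \Lambda[\xi_{ij}]$ by total polynomial degree in the generators $x_{ij}, \xi_{ij}$, setting $F_k C$ to consist of elements of total degree $\leq k$. The internal differential $d$ preserves total polynomial degree (each $\xi$ is replaced by an $x$, both contributing $1$), whereas $\Delta$ strictly lowers it by $2$; hence $d+\Delta$ preserves the filtration and on the associated graded only $d$ survives. The associated graded is then precisely the Koszul complex for the regular sequence $(x_{ij})$ in $\mathbb{C}[x_{ij}]$, which resolves $\mathbb{C}[x_{ij}]/(x_{ij}) \cong \mathbb{C}$, so its cohomology is $\mathbb{C}$ concentrated at (cohomological degree, total polynomial degree) $= (0, 0)$. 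The filtration is bounded below and exhaustive, and finite-dimensional in each bidegree, so the spectral sequence converges; it degenerates at $E_1$, yielding $H^*(C) \cong \mathbb{C}$ in cohomological degree $0$ generated by $[1]$. Since $\langle 1 \rangle = 1$, the map $\langle - \rangle$ induces the identity on cohomology, finishing the proof.

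The main obstacle will be the chain-map verification in (a): a careful tracking of signs and of the interaction between the trace pairing and the $\twodim$-pairing is needed to confirm that the two terms produced by $(d+\Delta)$ on a $\xi$-containing monomial cancel under expectation. An alternative route, which is conceptually cleaner, is to invoke the isomorphism sketched in the introduction between $\ComBVGauss{\Sigma\gl{N}{\twodim}}$ (up to a shift) and the polynomial Gaussian de Rham complex $\Omega^*_{\mathbb{C},\mu_N}(\her{N})$: under that identification, $\langle - \rangle$ becomes integration against a top form, the chain-map property becomes Stokes's theorem for polynomial-times-$e^{-Q/2}$ forms, and the cohomology computation reduces to the classical fact that this complex has one-dimensional cohomology concentrated in its top degree.
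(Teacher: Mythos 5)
Your proposal is correct and follows essentially the same route as the paper: the chain-map property is the Gaussian integration-by-parts identity (which the paper packages as $\Delta e^{-\sigma}=0$ plus the standard BV fact that $\int_{\her{N}}\Delta(\cdot)$ vanishes over a Lagrangian, after first realifying and complexifying at the end), and the quasi-isomorphism is obtained from the same filtration by polynomial degree, where your identification of the associated graded with the Koszul complex is just the explicit form of the paper's appeal to the acyclicity of $\twodim$ on the first page of the spectral sequence. The only caveat, which you already flag, is the sign/coordinate bookkeeping on $\her{N}$ in the integration-by-parts step; the paper's Lagrangian-subspace formulation is one clean way to dispose of it.
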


\begin{proof}
We begin by explaining the story over the ground field $\gf:=\mathbb{R}$. We may write the complex vector space $\twodim$ as the complexification of a real vector space $\twodim_{\mathbb{R}}$ that is spanned by the same generators $a$ and $b$ and which is equipped with the same cyclic infinity-structure given by \eqref{eqn_twodimalg}. The induced cyclic $\li$-structure on
\[ \her{N}^{\twodim}:=\twodim_{\mathbb{R}}\underset{\mathbb{R}}{\otimes}\her{N}, \]
in which $\her{N}$ is equipped with the trace pairing, is represented by a quadratic monomial
\[ \sigma:=\tilde{d}\in\ComBVGauss{\Sigma\her{N}^{\twodim}}, \]
cf. Section \ref{sec_reltocyclicstruct}. If we identify the space $\her{N}$ of Hermitian matrices  with the subspace of
\[ \Sigma\her{N}^{\twodim}=\Sigma\twodim_{\mathbb{R}}\underset{\mathbb{R}}{\otimes}\her{N} \]
that sits in degree zero and corresponds to the generator $a$ of $\Sigma\twodim_{\mathbb{R}}$, then on this subspace
\[ \sigma(X)=\frac{1}{2}\tr (X^2), \quad X\in\her{N}. \]

Consider the map
\begin{equation} \label{eqn_expvalpolyreal}
\ComBVGauss{\Sigma\her{N}^{\twodim}} \longrightarrow \mathbb{R}, \qquad f \mapsto \frac{\int_{\her{N}}fe^{-\sigma}}{\int_{\her{N}}e^{-\sigma}};
\end{equation}
where we have again identified $\her{N}$ with the subspace of $\Sigma\her{N}^{\twodim}$ that sits in degree zero. We must show that \eqref{eqn_expvalpolyreal} is a cocycle. However, note that
\begin{equation} \label{eqn_quadraticQME}
\Delta e^{-\sigma}=(\{\sigma,\sigma\}-\Delta\sigma)e^{-\sigma}=0,
\end{equation}
as $\{\sigma,\sigma\}$ vanishes due to the $\li$-constraint and $\Delta\sigma$ vanishes as $\sigma$ is an even quadratic form. Therefore,
\begin{displaymath}
\begin{split}
\int_{\her{N}} (\Delta f+d^*f)e^{-\sigma} &= \int_{\her{N}} (\Delta f-\{\sigma,f\})e^{-\sigma}, \\
&= \int_{\her{N}}\Delta(fe^{-\sigma})=0;
\end{split}
\end{displaymath}
where we have used \eqref{eqn_BVidentity}, \eqref{eqn_CEbracketdiff} and \eqref{eqn_quadraticQME}. The last integral vanishes as $\her{N}$ is a Lagrangian subspace of $\Sigma\her{N}^{\twodim}$ and so completely routine results from the Batalin-Vilkovisky formalism apply here to show that the integral is indeed zero; cf. for instance Corollary 5.17 of \cite{graphBV}, which contains a standard treatment of these basic results.

We claim in fact that \eqref{eqn_expvalpolyreal} is a quasi-isomorphism. Consider the ascending filtration of the complex $\ComBVGauss{\Sigma\her{N}^{\twodim}}$,
\[ F_p\ComBVGauss{\Sigma\her{N}^{\twodim}}:=\bigoplus_{i=0}^p\left[\left(\left(\Sigma\her{N}^{\twodim}\right)^*\right)^{\otimes i}\right]_{\symg{i}}. \]
This filtration is bounded below and exhaustive and hence the associated spectral sequence converges. Since the complex $\twodim_{\mathbb{R}}$ is acyclic, it is apparent that \eqref{eqn_expvalpolyreal} induces an isomorphism on the first page of this spectral sequence.

Now, returning to the main story over the ground field $\gf:=\mathbb{C}$, we use the simple well-known fact that $\gl{N}{\mathbb{C}}$ is the complexification of $\her{N}$;
\[ \Sigma\gl{N}{\twodim} = \mathbb{C}\underset{\mathbb{R}}{\otimes}\Sigma\her{N}^{\twodim}. \]
From this it is apparent that the map \eqref{eqn_expvalpoly} is the complexification of the map \eqref{eqn_expvalpolyreal}. The result now follows.
\end{proof}

To provide a noncommutative analogue of Proposition \ref{prop_expval}, consider the following. The polynomial ring $\mathbb{C}[\nu]$ appears as a summand in,
\[ \NCBVGauss{\Sigma\twodim} = \mathbb{C}[\nu]\otimes \sym(\NCHamPpoly{\Sigma\twodim}) = \mathbb{C}[\nu]\oplus\left[\mathbb{C}[\nu]\otimes \symp(\NCHamPpoly{\Sigma\twodim})\right]. \]

\begin{prop} \label{prop_NCBVqiso}
The canonical inclusion
\[ \mathbb{C}[\nu] \longrightarrow \NCBVGauss{\Sigma\twodim} \]
is a quasi-isomorphism of complexes.
\end{prop}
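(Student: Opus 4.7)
The plan is to reduce to showing that $\sym(\NCHamPpoly{\Sigma\twodim})$ has cohomology $\mathbb{C}$ concentrated in degree zero, and then to prove this via an ascending filtration by total tensor length that exploits the fact that $\twodim$ is a contractible complex. For the reduction, observe that $\nu \in \NCHampoly{\Sigma\twodim}$ is the empty cyclic word, so the formulas of Definition \ref{def_noncomBV} give $\{\nu,-\} = 0$ and $\nabla(\nu) = 0$; extending as (co)derivations, $\delta$ and $\nabla$ both vanish on the subalgebra $\mathbb{C}[\nu] \subset \sym(\NCHampoly{\Sigma\twodim})$. Since $d$ also acts trivially there, the canonical isomorphism $\NCBVGauss{\Sigma\twodim} \cong \mathbb{C}[\nu] \otimes \sym(\NCHamPpoly{\Sigma\twodim})$ is one of cochain complexes with trivial differential on the first factor, so it suffices to prove that the augmentation $\sym(\NCHamPpoly{\Sigma\twodim}) \to \mathbb{C}$ onto the $\sym^0$ summand is a quasi-isomorphism.

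For the main step, let $F_p \subset \sym(\NCHamPpoly{\Sigma\twodim})$ consist of symmetric products of cyclic words whose total tensor length is at most $p$. Inspecting the three pieces of the differential, $d$ preserves total length (being internal to each letter), while both $\delta$ (built from the Lie bracket) and $\nabla$ (the cobracket) strictly decrease total length by two, since each pairs off two letters via $\innprod^{-1}$. Hence $F_\bullet$ is preserved, and the induced differential on the associated graded is just $d$. Because $\twodim$ is contractible (via $h(b)=a$, $h(a)=0$ with $[d,h]=\id$), each $(\Sigma\twodim^*)^{\otimes k}$ is acyclic by K\"unneth; taking $\cycg{k}$-coinvariants is exact in characteristic zero, so each $C_k := [(\Sigma\twodim^*)^{\otimes k}]_{\cycg{k}}$ is acyclic; and the $\sym^n$ functor is likewise exact in characteristic zero, so $\sym^n(\NCHamPpoly{\Sigma\twodim})$ is acyclic for every $n \geq 1$. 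Since $d$ preserves the total-length grading, the total-length-$p$ part of $\sym^n$ is a subcomplex, and $\mathrm{gr}_p = F_p/F_{p-1}$ is the direct sum of these over $n$; in particular $\mathrm{gr}_p$ is acyclic for $p \geq 1$, while $\mathrm{gr}_0 = F_0 = \mathbb{C}$.

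The long exact sequence associated to $0 \to F_{p-1} \to F_p \to \mathrm{gr}_p \to 0$ then yields $H^*(F_{p-1}) \xto{\cong} H^*(F_p)$ for every $p \geq 1$, so by induction $H^*(F_p) = \mathbb{C}$ in degree zero for all $p \geq 0$. Since the filtration is exhaustive and cohomology commutes with filtered colimits, we conclude that $H^*(\sym(\NCHamPpoly{\Sigma\twodim})) = \mathbb{C}$, as required. The only subtle point to be careful about is that the filtration is not bounded above within any fixed cohomological degree -- for instance, $(a^*)^k$ lies in degree zero for every $k \geq 1$ -- so a naive appeal to a spectral sequence convergence theorem would be problematic; it is the inductive long-exact-sequence argument combined with the commutation of cohomology with filtered colimits that circumvents this issue. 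Beyond this, the remaining bookkeeping is the routine verification of the stated length behavior of $\delta$ and $\nabla$ from their explicit formulas.
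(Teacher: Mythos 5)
There is a genuine gap in your reduction step: the subspace $1\otimes\sym(\NCHamPpoly{\Sigma\twodim})$ is \emph{not} a subcomplex of $\NCBVGauss{\Sigma\twodim}$, so the claimed isomorphism of cochain complexes $\NCBVGauss{\Sigma\twodim}\cong\mathbb{C}[\nu]\otimes\sym(\NCHamPpoly{\Sigma\twodim})$ with trivial differential on the first factor fails. The cobracket \eqref{eqn_noncomcobracket} produces the \emph{empty} cyclic word whenever the two paired letters are adjacent; for example $\nabla(x\xi)=\nu^2$, so the differential of $x\xi$ contains the term $\nu^2$, which does not lie in $\sym(\NCHamPpoly{\Sigma\twodim})$. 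What your observations ($\{\nu,-\}=0$, $\nabla(\nu)=0$, $d\nu=0$) actually show is that the differential is $\mathbb{C}[\nu]$-linear, i.e.\ $\NCBVGauss{\Sigma\twodim}$ is a free dg $\mathbb{C}[\nu]$-module; but a free dg module need not split as $\mathbb{C}[\nu]\otimes(\text{complex})$, and here it demonstrably does not: if it did, every degree-zero cocycle lying in $\sym(\NCHamPpoly{\Sigma\twodim})$ would be cohomologous to a scalar, whereas $(x^2)$ is cohomologous to $\nu^2$ --- precisely the nontrivial $\nu$-dependence of the classes $p_{i_1,\ldots,i_k}$ that this proposition is designed to extract. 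Consequently the object analyzed in your main step, ``$\sym(\NCHamPpoly{\Sigma\twodim})$ with differential $d+\delta+\nabla$,'' is not a well-defined complex, and the intermediate claim that its augmentation to $\mathbb{C}$ is a quasi-isomorphism does not make sense as stated.

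The fix is that no reduction is needed: run your filtration by total tensor length directly on all of $\NCBVGauss{\Sigma\twodim}$, where powers of $\nu$ have length zero. As you correctly note, $d$ preserves total length while $\delta$ and $\nabla$ drop it by two, so the associated graded differential is $d$; contractibility of $\twodim$ together with exactness of $\cycg{k}$-coinvariants and of $\sym^n$ in characteristic zero shows every graded piece of positive length is acyclic, while the length-zero piece is now $\mathbb{C}[\nu]$ rather than $\mathbb{C}$, which is exactly the statement of the proposition. This is the paper's own argument (the spectral sequence of the proof of Proposition \ref{prop_expval}, repeated for this complex). Incidentally, your worry about convergence is unfounded: the filtration is increasing, exhaustive and bounded below, so the classical convergence theorem applies even though the filtration is unbounded within a fixed cohomological degree; your long-exact-sequence-plus-filtered-colimit substitute is nevertheless a valid and elementary way to reach the same conclusion in this one-column situation.
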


\begin{proof}
We may simply repeat the same spectral sequence argument that was applied in Proposition \ref{prop_expval} above.
\end{proof}

In summary, for every positive integer $N$ we have the commutative diagram
\begin{equation} \label{eqn_expvaldiagram}
\xymatrix{\mathbb{C}[\nu] \ar[r] \ar[d]_{\nu=N} & \NCBVGauss{\Sigma\twodim} \ar[d] \\ \mathbb{C} \ar@<0.5ex>@{-^{>}}[r] & \ComBVGauss{\Sigma\gl{N}{\twodim}} \ar@<0.5ex>@{-^{>}}[l]^-{\langle f \rangle \mapsfrom f} }
\end{equation}
in which the horizontal arrows are quasi-isomorphisms and the vertical arrows are given by~\eqref{eqn_polyBVmap}.

\subsection{Examples of calculations with $\twodim$}

Now we are ready to explain how the cyclic $\ai$-algebra $\twodim$ may be used to describe and calculate the multi-trace expectation values \eqref{eqn_expvaltrace}.

First we introduce some notation. Recall that $\Sigma\twodim$ is a symplectic vector space whose symplectic form is given by \eqref{eqn_sympform},
\[ \langle b,a \rangle = 1 = -\langle a,b \rangle. \]
If $a^*$ and $b^*$ denote the elements of $\Sigma\twodim^*$ that are dual to $a$ and $b$ respectively then define,
\[ x:=a^* \quad\text{and}\quad \xi:=-b^*. \]
According to \eqref{eqn_invinnprod} we have
\[ \{x,\xi\} = 1 = \{\xi,x\} \quad\text{and}\quad d^*\xi = -x. \]
Note that $x$ has degree zero and $\xi$ has degree minus-one.

We will denote by $x^i$ the corresponding monomial in $\NCHampoly{\Sigma\twodim}$ and by
\begin{equation} \label{eqn_ncstring}
(x^{i_1})(x^{i_2})\cdots(x^{i_k})\in\left[\NCHampoly{\Sigma\twodim}^{\otimes k}\right]_{\symg{k}}
\end{equation}
the corresponding element in $\NCBVGauss{\Sigma\twodim}=\sym(\NCHampoly{\Sigma\twodim})$. Now \eqref{eqn_ncstring} is a cocycle in $\NCBVGauss{\Sigma\twodim}$ and so by Proposition \ref{prop_NCBVqiso}, its cohomology class is represented by a unique polynomial
\[ p_{i_1,i_2,\ldots,i_k}(\nu)\in\mathbb{C}[\nu]. \]

\begin{prop} \label{prop_polycorrelationval}
For every positive integer $N$,
\[ p_{i_1,i_2,\ldots,i_k}(N) = I_{i_1,i_2,\ldots,i_k}^N = \frac{\int_{\her{N}}\tr (X^{i_1})\tr (X^{i_2})\cdots\tr (X^{i_k})e^{-\frac{1}{2}\tr (X^2)}\d X}{\int_{\her{N}}e^{-\frac{1}{2}\tr (X^2)}\d X}. \]
\end{prop}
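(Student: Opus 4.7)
The plan is a diagram chase around the commutative square \eqref{eqn_expvaldiagram}. By Proposition \ref{prop_NCBVqiso}, the cocycle $(x^{i_1})\cdots(x^{i_k}) \in \NCBVGauss{\Sigma\twodim}$ is cohomologous to a unique polynomial in $\nu$, which by definition is $p_{i_1,\ldots,i_k}(\nu)$. Applying the right vertical map \eqref{eqn_polyBVmap} to this cohomology relation, and then the expected-value map $\langle - \rangle$ along the bottom, will produce the desired identity $I^N_{i_1,\ldots,i_k} = p_{i_1,\ldots,i_k}(N)$.

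First I would verify that $(x^{i_1})\cdots(x^{i_k})$ is a cocycle for the full differential $d + \delta + \nabla$. Since $\innprod^{-1}$ on $\Sigma\twodim$ pairs $x$ only with $\xi$, every bracket $\{x^i,x^j\}$ vanishes, so both $\delta$ and $\nabla$ annihilate any polynomial in the cyclic generators $(x^i)$; and $d^* x = 0$ ensures the last piece $d^*$ does too.

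Next I would compute the image of both sides of the cohomology relation under \eqref{eqn_polyBVmap}. By the explicit trace computation \eqref{eqn_calcOTFTtensor} of Section \ref{Phi map on matrices}, combined with Theorem \ref{thm_OTFTmatrixdglamap}, the map sends a symmetric product of cyclic words $(x^{i_1})\cdots(x^{i_k})$ to the multi-trace functional $\tr(X^{i_1}) \cdots \tr(X^{i_k})$, where $X$ is the matrix coordinate dual to the degree-zero generator $a$. The same bookkeeping sends $\nu$ to $N\nu$ under $\Mor$ (per the convention introduced for the Morita map) and then to $1$ under $\sigma$, so $\nu \mapsto N$ overall; hence $p_{i_1,\ldots,i_k}(\nu) \mapsto p_{i_1,\ldots,i_k}(N)$. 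Consequently the cohomology relation $(x^{i_1})\cdots(x^{i_k}) - p_{i_1,\ldots,i_k}(\nu) = (d+\delta+\nabla)\eta$ is sent to the identity
\[
\tr(X^{i_1})\cdots\tr(X^{i_k}) - p_{i_1,\ldots,i_k}(N) = (d + \Delta)\xi
\]
for some $\xi \in \ComBVGauss{\Sigma\gl{N}{\twodim}}$. Since Proposition \ref{prop_expval} presents $\langle - \rangle$ as a chain map into $\mathbb{C}$ (concentrated in degree zero), it kills the coboundary on the right-hand side and fixes the constant, yielding $I^N_{i_1,\ldots,i_k} = p_{i_1,\ldots,i_k}(N)$ as desired.

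The main conceptual work has already been done: the commutative diagram \eqref{eqn_expvaldiagram}, the explicit identification of the Morita map with traces via \eqref{eqn_calcOTFTtensor}, and the two quasi-isomorphism results (Propositions \ref{prop_NCBVqiso} and \ref{prop_expval}) supply all the heavy lifting. The only checkpoint specific to this proposition is tracking the convention $\nu \mapsto N\nu$ through the composite, which is precisely what converts the formal noncommutative bookkeeping parameter $\nu$ into the matrix rank $N$ on the commutative side.
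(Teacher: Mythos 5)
Your argument is exactly the paper's proof: the paper deduces the proposition from Propositions \ref{prop_polyBVmap}, \ref{prop_expval} and \ref{prop_NCBVqiso} by chasing the diagram \eqref{eqn_expvaldiagram} and noting that the image of $(x^{i})$ under the right-hand vertical map is the function $X\mapsto\tr(X^{i})$. Your additional checks --- that $(x^{i_1})\cdots(x^{i_k})$ is a $(d+\delta+\nabla)$-cocycle because $\langle x,x\rangle^{-1}=0$ and $d^*x=0$, and that $\nu\mapsto N\nu\mapsto N$ through $\sigma\circ\Mor$ --- are correct details the paper leaves implicit.
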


\begin{proof}
This follows from Proposition \ref{prop_polyBVmap}, Proposition \ref{prop_expval} and Proposition \ref{prop_NCBVqiso} as the image of $(x^i)$ under the right-hand map of \eqref{eqn_expvaldiagram} is the function sending $X$ to $\tr (X^i)$ for $X\in\gl{N}{\mathbb{C}}$.
\end{proof}

The preceding proposition is the first clue that the homological algebra of the complex $\NCBVGauss{\Sigma\twodim}$ which arises through the noncommutative Batalin-Vilkovisky formalism captures features of random matrix theory.

We now demonstrate in some simple cases how these results may be applied to compute single and multi-trace expectation values. For instance,
\[ x^2 = -d^*(x\xi) \approx \Delta(x\xi) = \nabla(x\xi) = \nu^2 \]
and hence $p_2(\nu)=\nu^2$.

Before proceeding further we stop to note that $p_{i_1,i_2,\ldots,i_k}(\nu)$ is equal to zero whenever the sum of the indices $i_j$ is odd. This is because $\Delta$ is an order two operator and $d^*$ is an order zero operator.

Continuing we have,
\[ (x)(x) = -d^*((x)(\xi)) \approx \Delta((x)(\xi)) = \nu \]
and hence $p_{1,1}(\nu)=\nu$.

Using our preceding calculations we may compute $p_4(\nu)$ as follows:
\begin{displaymath}
\begin{split}
x^4 &= -d^*(x^3\xi) \approx \Delta(x^3\xi) = \nabla(x^3\xi), \\
&= 2\nu x^2 + (x)(x) \approx 2\nu^3 + \nu.
\end{split}
\end{displaymath}
Hence $p_4(\nu)=2\nu^3+\nu$.

Moving on we calculate
\begin{displaymath}
\begin{split}
(x)(x^3) &= -d^*((\xi)(x^3)) \approx \Delta((\xi)(x^3)) = 3(x^2) \approx 3\nu^2, \\
(x^2)(x^2) & = -d^*((x\xi)(x^2)) \approx \Delta((x\xi)(x^2)) = \nabla(x\xi)(x^2) + \{(x\xi),(x^2)\}, \\
&= \nu^2(x^2) + 2(x^2) \approx \nu^2(\nu^2+2), \\
(x^6) &= -d^*(x^5\xi) \approx \Delta(x^5\xi) = \nabla(x^5\xi), \\
&= 2\nu(x^4) + 2(x)(x^3) + (x^2)(x^2), \\
&\approx 2\nu^2(2\nu^2+1) + 6\nu^2 + \nu^2(\nu^2+2) = \nu^2(5\nu^2+10);
\end{split}
\end{displaymath}
and therefore
\begin{displaymath}
\begin{split}
p_{1,3}(\nu) &= 3\nu^2, \\
p_{2,2}(\nu) &= \nu^4 + 2\nu^2, \\
p_6(\nu) &= 5\nu^4 + 10\nu^2.
\end{split}
\end{displaymath}

Using the above method, the reader may  similarly verify
\begin{displaymath}
\begin{split}
p_{1,5}(\nu) &= 10\nu^3 + 5\nu, \\
p_{2,4}(\nu) &= 2\nu^5 + 9\nu^3 + 4\nu, \\
p_{3,3}(\nu) &= 12\nu^3 + 3\nu, \\
p_8(\nu) &= 14\nu^5 + 70\nu^3 + 21\nu, \\
p_{1,7}(\nu) &= 35\nu^4 + 70\nu^2, \\
p_{2,6}(\nu) &= 5\nu^6 + 40\nu^4 + 60\nu^2, \\
p_{3,5}(\nu) &= 45\nu^4 + 60\nu^2, \\
p_{4,4}(\nu) &= 4\nu^6 + 40\nu^4 + 61\nu^2, \\
p_{10}(\nu) &= 42\nu^6 + 420\nu^4 + 483\nu^2.
\end{split}
\end{displaymath}
We emphasize that, in principle, any single or multi-trace expectation value may be computed using these techniques. Here is a simple relation that may be deduced using this method.

\begin{prop}
For every positive integer $N$,
\[ \sum_{i=1}^{2k-1} I^N_{i,2k-i} = I^N_{2k+2} - 2NI^N_{2k}. \]
\end{prop}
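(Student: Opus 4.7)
The plan is to extend the pattern used to compute $p_4(\nu)$ and $p_6(\nu)$ in the excerpt, applied to the element $x^{2k+1}\xi \in \NCHamPpoly{\Sigma\twodim}$, which in effect is the obvious cocycle witnessing $x^{2k+2}$. Since $d^*\xi = -x$ and $d^*$ is a derivation on $\NCHampoly{\Sigma\twodim}$, we immediately have $x^{2k+2} = -d^*(x^{2k+1}\xi)$. Because the full differential $d^* + \delta + \nabla$ on $\NCBVGauss{\Sigma\twodim}$ squares to zero and $\delta$ vanishes on the symmetric degree one subspace $\sym^1(\NCHampoly{\Sigma\twodim}) = \NCHampoly{\Sigma\twodim}$ (as there are no pairs of symmetric factors to bracket), the class of $x^{2k+2}$ is represented by $\nabla(x^{2k+1}\xi)$ in cohomology.

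The next step is to compute $\nabla(x^{2k+1}\xi)$ explicitly using the formula \eqref{eqn_noncomcobracket}. Writing the cyclic word $x^{2k+1}\xi$ as having letters in positions $1,\ldots,2k+1$ equal to $x$ and the letter $\xi$ in position $2k+2$, the inverse pairing $\innprod^{-1}$ is only nonzero on an $(x,\xi)$ pair. Hence the only contributing terms pair position $2k+2$ with some position $i\in\{1,\ldots,2k+1\}$, splitting the cyclic word into two sub-words of lengths $2k+1-i$ and $i-1$, both consisting of $x$'s. The boundary cases $i=1$ and $i=2k+1$ produce an empty cyclic word, which by convention is the generator $\nu$ of the constant term of $\NCHampoly{\Sigma\twodim}$, and each contributes a factor of $\nu\cdot(x^{2k})$. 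The interior cases reindex to the symmetric products $(x^j)(x^{2k-j})$ for $j = 1, \ldots, 2k-1$. Assuming the Koszul signs all come out to $+1$ (which can be checked against the $k=2$ computation $\nabla(x^5\xi) = 2\nu(x^4) + 2(x)(x^3) + (x^2)(x^2)$ in the excerpt), this yields
\[
\nabla(x^{2k+1}\xi) = 2\nu\,(x^{2k}) + \sum_{j=1}^{2k-1}(x^j)(x^{2k-j}).
\]

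Taking cohomology classes via Proposition \ref{prop_NCBVqiso} and mapping under \eqref{eqn_polyBVmap} to the commutative side, the class of $(x^j)(x^{2k-j})$ becomes $p_{j,2k-j}(\nu)$ while the class of $(x^{2k})$ becomes $p_{2k}(\nu)$ and that of $x^{2k+2}$ becomes $p_{2k+2}(\nu)$. This gives the polynomial identity
\[
p_{2k+2}(\nu) = 2\nu\, p_{2k}(\nu) + \sum_{j=1}^{2k-1} p_{j,2k-j}(\nu),
\]
and evaluating at $\nu = N$ using Proposition \ref{prop_polycorrelationval} delivers the claimed relation. The only nontrivial point in the argument is the sign bookkeeping in the cobracket formula \eqref{eqn_noncomcobracket}; the pattern from the worked examples in the excerpt (all $k\leq 3$ cases, which one may cross-check against the listed polynomials $p_8 - 2\nu p_6$ versus the sum of $p_{j,6-j}$) confirms that all signs contribute positively, so the main obstacle reduces to a careful parity check involving $|x| = 0$ and $|\xi| = -1$.
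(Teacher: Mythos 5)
Your proposal is correct and follows essentially the same route as the paper's proof: both compute $\nabla(x^{2k+1}\xi)$ via the cobracket, identify it in cohomology with $x^{2k+2}$ (using $-d^*(x^{2k+1}\xi) = x^{2k+2}$ and the vanishing of $\delta$ on symmetric degree one), split off the two boundary terms as $2\nu\,(x^{2k})$, and evaluate at $\nu=N$ via Proposition \ref{prop_polycorrelationval}. The paper's version is terser, but the underlying argument is identical, including the implicit sign bookkeeping.
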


\begin{proof}
We compute
\begin{displaymath}
\begin{split}
x^{2k+2} &= -d^*(x^{2k+1}\xi) \approx \Delta(x^{2k+1}\xi) = \nabla(x^{2k+1}\xi), \\
&= \sum_{i=1}^{2k+1}(x^{i-1})(x^{2k+1-i}), \\
&= 2\nu(x^{2k}) + \sum_{i=1}^{2k-1}(x^i)(x^{2k-i}).
\end{split}
\end{displaymath}
\end{proof}

And finally, we can give a very simple description of these expectation values in the following special case.

\begin{prop}
For every positive integer $N$,
\[ I^N_{\underbrace{1, 1, \dots, 1}_{2n \text{~terms}}} = N^n (2n-1)!!\]
\end{prop}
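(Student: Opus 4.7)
The plan is to prove this by induction on $n$ using the cohomological machinery of the noncommutative BV complex $\NCBVGauss{\Sigma\twodim}$, following exactly the pattern of the worked examples in this section.

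By Proposition \ref{prop_polycorrelationval}, it suffices to compute the polynomial $p_{1,\ldots,1}(\nu)$ (with $2n$ ones) that represents the cohomology class of $(x)(x)\cdots(x) \in \sym^{2n}(\NCHampoly{\Sigma\twodim})$. I will show by induction on $n$ that this class equals $(2n-1)!!\,\nu^n$. The base case $n=1$ is the identity $p_{1,1}(\nu) = \nu$, already established in the paper.

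For the inductive step, I would consider the auxiliary element $(\xi)(x)^{2n-1} \in \sym^{2n}(\NCHampoly{\Sigma\twodim})$ and apply the total differential $d^* + \delta + \nabla$ on $\NCBVGauss{\Sigma\twodim}$ to it. Using $d^*\xi = -x$ and $d^*x = 0$, together with the fact that $d^*$ is a derivation, one gets $d^*((\xi)(x)^{2n-1}) = -(x)^{2n}$. The cobracket contribution $\nabla((\xi)(x)^{2n-1})$ vanishes because $\nabla$ is extended as a derivation on the symmetric algebra and each individual factor is a single-letter cyclic word, on which the cobracket \eqref{eqn_noncomcobracket} is identically zero. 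For the Chevalley--Eilenberg contribution $\delta$, only pairs of factors with non-vanishing bracket contribute; since $\langle x,x\rangle^{-1} = 0$ forces $\{(x),(x)\} = 0$, while $\{(\xi),(x)\} = \nu$, exactly the $2n-1$ pairs involving the unique factor $(\xi)$ contribute, yielding
\[
\delta\bigl((\xi)(x)^{2n-1}\bigr) = (2n-1)\,\nu\,(x)^{2n-2}.
\]
Combining these gives $(x)^{2n} \approx (2n-1)\nu\,(x)^{2n-2}$ modulo exact terms, so at the level of cohomology classes $p_{\underbrace{1,\ldots,1}_{2n}}(\nu) = (2n-1)\nu \cdot p_{\underbrace{1,\ldots,1}_{2n-2}}(\nu)$, and the induction yields $p_{\underbrace{1,\ldots,1}_{2n}}(\nu) = (2n-1)!!\,\nu^n$. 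Specializing at $\nu = N$ via Proposition \ref{prop_polycorrelationval} gives the desired formula.

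The only subtle point, and hence the main thing to double-check, is the sign bookkeeping in the Koszul signs entering $\delta$ on $(\xi)(x)^{2n-1}$: one must verify that all $2n-1$ contributions from bracketing $(\xi)$ against the various $(x)$-factors combine with the same sign (matching the convention used in the worked single-trace computation $(x^{2k+2}) \approx \nabla(x^{2k+1}\xi)$ elsewhere in the section), so that no cancellations occur. Given that the analogous examples $p_{1,1}(\nu) = \nu$ and $p_{1,3}(\nu) = 3\nu^2$ worked in the paper show $\{(\xi),(x)\}$ contributing with a definite positive sign, this sign check is routine bookkeeping rather than a conceptual obstacle.
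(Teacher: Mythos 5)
Your proposal is correct and is essentially the paper's own argument: the paper likewise applies the differential to the auxiliary element $(x)^{2n-1}(\xi)$, notes that only $\delta$ contributes (giving $(2n-1)\nu(x)^{2n-2}$), and iterates down to $(2n-1)!!\,\nu^n$ before evaluating at $\nu=N$ via Proposition \ref{prop_polycorrelationval}. Casting the iteration as an explicit induction on $n$ with base case $p_{1,1}(\nu)=\nu$ is only a cosmetic difference.
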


\begin{proof}
We compute
\begin{displaymath}
\begin{split}
(x)^{2n} &= -d^*((x)^{2n-1} (\xi)) \approx \Delta((x)^{2n-1}(\xi)) = \delta((x)^{2n-1}(\xi)) \\
&= (2n-1)\nu(x)^{2n-2} \\
&\approx (2n-1)(2n-3)\nu^2(x)^{2n-4}\\
&\approx (2n-1)!!\nu^n
\end{split}
\end{displaymath}
\end{proof}

As mentioned earlier, a more complete discussion of this subject and the large $N$ asymptotics of the correlation functions \eqref{eqn_expvaltrace} will be provided in a subsequent article. We note however, that the Harer-Zagier recurrence relation is encoded in our framework as follows.

\begin{theorem}
The polynomials $p_{2k}$ satisfy the Harer-Zagier recurrence relation:
\begin{equation} \label{eqn_HZrecpoly}
(k+1) p_{2k}(\nu) =  (4k-2) \nu p_{2k-2}(\nu) + (k-1)(2k-1)(2k-3)p_{2k-4}(\nu).
\end{equation}
\end{theorem}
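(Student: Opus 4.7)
My plan is to reduce the polynomial identity \eqref{eqn_HZrecpoly} to its numerical specializations at positive integers, where it becomes the classical Harer--Zagier recurrence \eqref{eqn_HZrec}. First, Proposition \ref{prop_polycorrelationval} provides $p_{2k}(N) = I^N_{2k}$ for every positive integer $N$, so after replacing $\nu$ by $N$ the desired identity is exactly \eqref{eqn_HZrec}. The latter is a classical fact that can be derived by standard techniques with Hermite polynomials (as indicated in the introduction) or deduced directly from the closed formula \eqref{eqn_HZformula}.

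Second, I would assemble the combination
\[ q(\nu) := (k+1)\,p_{2k}(\nu) - (4k-2)\,\nu\, p_{2k-2}(\nu) - (k-1)(2k-1)(2k-3)\,p_{2k-4}(\nu) \in \mathbb{C}[\nu]. \]
By the previous step, $q(N) = 0$ for every positive integer $N$. Since $q$ is a polynomial in the single variable $\nu$ with infinitely many roots, it must vanish identically, which is precisely \eqref{eqn_HZrecpoly}.

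There is no real obstacle along this route, because the analytic content is imported from the classical literature on the Gaussian unitary ensemble. A conceptually more satisfying (and substantially harder) alternative, more in the spirit of the noncommutative BV approach developed in this section, would be to exhibit an explicit $(d + \Delta)$-coboundary in $\NCBVGauss{\Sigma\twodim}$ that realizes \eqref{eqn_HZrecpoly} at the cocycle level, namely an element $\omega$ with
\[ (d+\Delta)\,\omega = (k+1)(x^{2k}) - (4k-2)\,\nu\,(x^{2k-2}) - (k-1)(2k-1)(2k-3)(x^{2k-4}). \]
Combining such an identity with the recurrence $p_{2k}(\nu) = 2\nu\, p_{2k-2}(\nu) + \sum_{j=1}^{2k-3} p_{j,2k-2-j}(\nu)$ derived in the preceding proposition would reduce the task to producing a cocycle-level identity that re-expresses $(k+1)\sum_{j=1}^{2k-3}(x^j)(x^{2k-2-j})$ in terms of $\nu\,(x^{2k-2})$ and $(x^{2k-4})$ alone. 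Producing such an $\omega$ by hand would essentially reconstruct the surface-gluing combinatorics of the original Harer--Zagier argument and would constitute the genuinely difficult step of a self-contained derivation; for the purposes of the stated theorem, the specialization argument above suffices.
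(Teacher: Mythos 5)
Your specialization argument is exactly the paper's proof: by Proposition \ref{prop_polycorrelationval} the identity at $\nu=N$ is the classical recurrence \eqref{eqn_HZrec} (taken as known, cf.\ \cite{Ledoux}), and since a nontrivial polynomial has only finitely many roots the polynomial identity follows. The cocycle-level alternative you sketch is not needed and is not what the paper does, so your proposal matches the paper's approach and is correct.
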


\begin{proof}
By Proposition \ref{prop_polycorrelationval}, evaluating \eqref{eqn_HZrecpoly} at $\nu=N$ yields \eqref{eqn_HZrec}. Since this equation holds for all $N$ and a nontrivial polynomial may have only finitely many roots, the theorem follows.
\end{proof}

As noted earlier, this recurrence relation implies Wigner's semicircle law as well as the closed formula \eqref{eqn_HZformula} for $p_{2k}$ of Harer-Zagier.

\section{Obstruction theory for the quantum master equation} \label{sec_obsthy}

Having addressed the relevance of our framework to random matrix theory in the previous section, we now move on to describe how it may be used to analyze the problem of quantization within the Batalin-Vilkovisky formalism in the large $N$ limit. Strictly speaking, this will be the subject of Section \ref{sec_largeN}, as in the current section we will be occupied with the construction of a framework in which to describe most generally the problem of quantization in the Batalin-Vilkovisky formalism. Here we will explain how the examples from Section \ref{sec_commBV} and \ref{sec_relcomnoncom} fit into this general framework. This work will be necessary for what follows in the final section.

Following the introduction of our general framework, we prove a theorem that allows us to deduce a one-to-one correspondence between moduli spaces of quantizations, provided that a mapping induces an isomorphism between the cohomology theories controlling the quantization process. Experts will probably substantively recognize this theorem as the well-known theorem of Goldman-Milson, Theorem 2.4 of \cite{goldmill}; which also appears elsewhere in the literature in various guises, see e.g. \cite{getzliethy} and Theorem 2.1 of \cite{getzdarboux}. Although the method of proof presented here is largely the same, differences arise; for example our differential graded Lie algebras are not nilpotent and our notion of a filtration differs slightly from the standard one. This prevents us from merely citing the results from the above sources.

\subsection{General framework}

\subsubsection{Definitions}

Our starting point is a differential $\mathbb{Z}/2\mathbb{Z}$-graded Lie algebra
\[ \left(\mathfrak{g},d,\{-,-\}\right) \]
whose Lie bracket $\{-,-\}$ has \emph{odd} degree. We require this to come equipped with a filtration of the type which we now specify.

\begin{defi}
A \emph{strong filtration} on $\mathfrak{g}$ is a complete Hausdorff decreasing filtration
\[
F_0\mathfrak{g} \supset F_1\mathfrak{g} \supset \cdots
\]
satisfying
\begin{flalign} \label{eqn_BVfiltration}
\begin{array}{ll}
F_0\mathfrak{g} = \mathfrak{g}. \\
\left\{F_p\mathfrak{g},F_q\mathfrak{g}\right\}\subset F_{p+q}\mathfrak{g}; & \quad p,q\geq 0. \\
d(F_p\mathfrak{g})\subset F_{p+1}\mathfrak{g}, & \quad p\geq 0.
\end{array}
&&
\end{flalign}
\end{defi}

One consequence of this definition is that the $F_p\mathfrak{g}$ are differential graded ideals. We note that the above definition differs from the customary notion of a filtered differential graded Lie algebra in that the differential is now required to increase the filtration degree. We assume, for the rest of this section, that our differential graded Lie algebras come equipped with a strong filtration.

\subsubsection{Examples}

In our paper we have already encountered several examples.

\begin{example}
Given a symplectic vector space $V$ with an odd symplectic form, the differential graded Lie algebra
\[ \ComBV{V} = \left(\gf[[\hbar]]\cotimes\csym V^*,\{-,-\},\hbar\Delta\right) \]
admits the strong filtration
\begin{equation} \label{eqn_commfilterexample}
F_p\ComBV{V} := \hbar^p\ComBV{V}
\end{equation}
by powers of $\hbar$.
\end{example}

\begin{example} \label{exm_noncomBVfiltration}
Given a symplectic vector space $V$ whose symplectic form is odd,
consider the differential graded Lie algebra $\NonComBV{V}$ whose underlying space is
\[ \gf[[\gamma]]\cotimes\csymp(\NCHam{V}) \subset \gf[[\gamma]]\cotimes\csym(\NCHam{V}) = \gf[[\gamma,\nu]]\cotimes\csym(\NCHamP{V}). \]
Define the order of $\gamma$ to be $2$, the order of $\nu$ to be $1$ and the order of an element in the $k$-fold symmetric power of $\NCHamP{V}$ to be $(k-1)$ so that an element
\[ \gamma^i\nu^j h_1 h_2\cdots h_k\in\NonComBV{V} \]
has order $(2i+j+k-1)$, where $h_r\in\NCHamP{V}$.
This notion of order determines a descending filtration
\[ F_p\NonComBV{V} := \text{`everything of order $\geq p$'} \]
that is strong.
\end{example}

\begin{example}
Given a cyclic $\ai$-algebra $A$ whose inner product has odd degree, consider the differential graded Lie algebra
\[ \ComBV{\Sigma\gl{N}{A}}^{\gl{N}{\gf}} \]
formed by taking the $\gl{N}{\gf}$-invariants of $\ComBV{\Sigma\gl{N}{A}}$.
This is a strongly filtered differential graded Lie algebra where the filtration is the one induced by~\eqref{eqn_commfilterexample}.
\end{example}

\subsection{Maurer-Cartan elements}

\subsubsection{Definitions}

Consider the Maurer-Cartan functor that assigns to a strongly filtered differential graded Lie algebra $\mathfrak{g}$, its Maurer-Cartan set
\[ \MCset{\mathfrak{g}}:=\left\{x\in\mathfrak{g}^0:dx+\frac{1}{2}[x,x]=0\right\}. \]
The subspace $F_1\mathfrak{g}^1$ acts on $\MCset{\mathfrak{g}}$ by the formula
\begin{equation} \label{eqn_MCact}
\exp(y)\cdot x:= x + \sum_{i=0}^\infty \frac{1}{(i+1)!}\{y,-\}^i\left(dy+\{y,x\}\right); \quad y\in F_1\mathfrak{g}^1,x\in\MCset{\mathfrak{g}},
\end{equation}
where $\{y,-\}:a\mapsto\{y,a\}$ denotes the adjoint map. We insist that $y$ lie in $F_1\mathfrak{g}^1$ in order for the above sum to converge, where we rely on the fact that the filtration is complete.

\begin{rem}
Note that since the Lie bracket on $\mathfrak{g}$ is \emph{odd}, the Maurer-Cartan elements live in degree zero rather than one. This particular feature also explains other minor variations in, for instance, signs in formulas such as \eqref{eqn_MCact} above, that the reader well-acquainted with Maurer-Cartan spaces might pick up on.
\end{rem}

The twisted action defined by \eqref{eqn_MCact} corresponds to the standard adjoint action on the semi-direct product of $\mathfrak{g}$ with (the shift of) $\gf$, where $\gf$ acts on $\mathfrak{g}$ using the differential $d$, cf. \cite{nijenhuisrichardsondeformations} Section 3. As such, standard formulae apply to this action, including the Baker-Campbell-Hausdorff formula. Define
\begin{equation} \label{eqn_BCHformula}
z \star y := \sum_{k=1}^{\infty} \frac{(-1)^{k-1}}{k}\sum_{\begin{subarray}{c} i_1 + j_1 >0 \\ \vdots \\ i_k+j_k > 0 \end{subarray}}\frac{\{z^{i_1}y^{j_1} \cdots z^{i_k}y^{j_k}\}}{\left(\sum_{l=1}^k (i_l+j_l) \right)\left(\prod_{m=1}^k (i_m! j_m!) \right)}; \quad y,z\in F_1\mathfrak{g}^1;
\end{equation}
where
\[ \{z^{i_1}y^{j_1} \cdots z^{i_k}y^{j_k}\} :=\{\underbrace{z,\{z,\{z,\cdots}_{i_1 \text{ times}}\{\underbrace{y,\{y,\{y,\cdots}_{j_1 \text{ times}}\{\cdots\}\}\cdots\} \]
is a Lie monomial in $y$ and $z$. Then $F_1\mathfrak{g}^1$ forms a group under $\star$ with the identity given by $0$ and the inverse of an element $y$ in $F_1\mathfrak{g}^1$ given by $-y$. Note that the sum in \eqref{eqn_BCHformula} converges as the filtration is complete.
In fact, \eqref{eqn_MCact} is a group action,
\[ \exp(z)\cdot\left(\exp(y)\cdot x\right) = \exp(z\star y)\cdot x; \quad y,z\in F_1\mathfrak{g}^1,x\in\MCset{\mathfrak{g}}, \]
by the Baker-Campbell-Hausdorff Theorem.
We will view the quotient of the set $\MCset{\mathfrak{g}}$ by the action of the group $F_1\mathfrak{g}^1$ as a \emph{moduli space}.

In the Batalin-Vilkovisky approach to quantization, one begins at the classical level,
which means in the shifted Lie algebra.
This notion admits a nice formulation in the strongly filtered setting.

\begin{defi}
A solution to the {\em classical master equation} is an element
\[ x_0\in\MCset{\mathfrak{g}/F_1\mathfrak{g}}. \]
\end{defi}

\begin{rem}
In our examples, the quotient algebras $\mathfrak{g}/F_1\mathfrak{g}$ are exactly the ``classical'' aspect of the commutative and noncommutative BV formalisms, cf. Section \ref{sec_exampleCME}.
\end{rem}

Note that by \eqref{eqn_BVfiltration} the differential on $\mathfrak{g}/F_1\mathfrak{g}$ is zero and hence the Maurer-Cartan constraint in this case is simply $\{x_0,x_0\}=0$. Now the quotient map from $\mathfrak{g}$ to $\mathfrak{g}/F_1\mathfrak{g}$ induces a map
\[ \MCset{\mathfrak{g}}\to\MCset{\mathfrak{g}/F_1\mathfrak{g}} \]
which we view as a kind of \emph{dequantization} map.

\begin{defi}
Given a solution to the classical master equation $x_0\in\MCset{\mathfrak{g}/F_1\mathfrak{g}}$, we use $\MCfiberset{\mathfrak{g}}{x_0}$ to denote the fiber of this dequantization map. An element of this fiber will be called a \emph{quantization} of $x_0$. Note that $F_1\mathfrak{g}^1$ acts on this fiber by \eqref{eqn_MCact}.
The \emph{moduli space of quantizations} of $x_0$ is then the quotient of this fiber by the action of $F_1\mathfrak{g}^1$. We will denote this moduli space  of quantizations by $\MCfibermod{\mathfrak{g}}{x_0}$.
\end{defi}

We can refine this set of quantizations to a groupoid of quantizations, as follows.
Using \eqref{eqn_MCact}, we define the notion of a morphism between two quantizations of a solution $x_0$ to the classical master equation.

\begin{defi}
Given two quantizations $x,x'\in\MCfiberset{\mathfrak{g}}{x_0}$, a \emph{morphism} from $x$ to $x'$ is an element $y\in F_1\mathfrak{g}^1$ satisfying
\[ x' = \exp(y)\cdot x. \]
We denote the set of all such morphisms by $\Morph{x}{x'}$.
\end{defi}

Note that given any $x\in\MCset{\mathfrak{g}}$ and any element $\eta\in F_1\mathfrak{g}^0$, the element
\[ d\eta+\{\eta,x\}\in F_1\mathfrak{g}^1, \]
will be a morphism from $x$ to itself.
This assertion follows from the well-known fact that any Maurer-Cartan element $x$ can be used to twist the differential $d$ to a new differential:
\[ z\mapsto dz + \{z,x\}. \]
This leads to the following definition.

\begin{defi}
Two morphisms $y,y'\in F_1\mathfrak{g}^1$ from a quantization $x$ to a quantization $x'$ of $x_0$ will be called \emph{homotopy equivalent} if
\[ y' = y \star (d\eta +\{\eta,x\}) \]
for some $\eta\in F_1\mathfrak{g}^0$. We denote the set of homotopy equivalence classes of such morphisms by $\MorphH{x}{x'}$.
\end{defi}

\subsubsection{Examples of solutions to the classical master equation} \label{sec_exampleCME}

We describe the solutions to the classical master equation for our main examples.

\begin{example}
For $\ComBV{V}$ with its strong filtration by powers of $\hbar$,
we have
\[ \ComBV{V}/F_1\ComBV{V} = \left(\csym V^*,\{-,-\}\right). \]
In particular, it follows from the discussion in Section \ref{sec_reltocyclicstruct} that any cyclic $\li$-structure $l$ on the space $\Sigma^{-1}V$ yields a solution $x_0:=\tilde{l}$ to the classical master equation for $\ComBV{V}$.
More generally, the set of all solutions of $\ComBV{V}$ will consist of precisely all of the so-called \emph{curved} cyclic $\li$-structures; see \cite{linftwist, markldefthy} for a definition.
\end{example}

\begin{example}
For $\NonComBV{V}$ with its strong filtration by order,
we have
\[ \NonComBV{V}/F_1\NonComBV{V} = \left(\NCHam{V},\{-,-\}\right). \]
Again, it follows from the discussion in Section \ref{sec_BVNonComGeom} that any cyclic $\ai$-structure $m$ on the space $A:=\Sigma^{-1}V$ yields a solution $x_0:=\tilde{m}$ to the classical master equation for $\NonComBV{V}$. Likewise, the set of all solutions consists of precisely all the \emph{curved} cyclic $\ai$-structures.
\end{example}

\begin{example} \label{exm_comglnfiltration}
For $\ComBV{\Sigma\gl{N}{A}}^{\gl{N}{\gf}}$,
we have
\[ \ComBV{\Sigma\gl{N}{A}}^{\gl{N}{\gf}}/F_1\ComBV{\Sigma\gl{N}{A}}^{\gl{N}{\gf}} = \left(\left[\csym\Sigma\gl{N}{A}^*\right]^{\gl{N}{\gf}},\{-,-\}\right). \]
The cyclic commutator $\li$-structure $l_N$ on $\gl{N}{A}$ arising from the cyclic $\ai$-structure on $A$ is $\gl{N}{\gf}$-invariant and hence gives rise to a solution,
\[ x_0:=\tilde{l}_N\in\MCset{\left[\csym\Sigma\gl{N}{A}^*\right]^{\gl{N}{\gf}}} \]
to the classical master equation for $\ComBV{\Sigma\gl{N}{A}}^{\gl{N}{\gf}}$.
\end{example}

\subsection{Obstruction theory}

The process of building a quantization of a solution to the classical master equation is a step-by-step process controlled by a cohomology theory similar to the process of deforming other algebraic structures, cf. \cite{gerstdeform}. In this section we describe the general obstruction theory and cohomology theory controlling this process and its morphisms. Since this material is ultimately quite standard and well-known in other contexts, our intention here will be to be quite brief, sketching the details of proofs and providing references to the reader interested in a fuller treatment.

\subsubsection{Finite level structures}

Again, we start with a differential graded Lie algebra $\mathfrak{g}$ equipped with a strong filtration. This filtration provides a (finite) strong filtration on the quotient $\mathfrak{g}/F_{m+1}\mathfrak{g}$ and this leads to the notion of a \emph{finite level structure}.

\begin{defi}
For $m \geq 0$, an element
\[ x_m\in\MCset{\mathfrak{g}/F_{m+1}\mathfrak{g}}\]
is called a \emph{level $m$ structure}.
\end{defi}

Note that under this definition, a level~$0$ structure is precisely a solution to the classical master equation, as described in the preceding section. The natural quotient map induces a canonical map
\begin{equation} \label{eqn_leveldownmap}
\MCset{\mathfrak{g}/F_{m+2}\mathfrak{g}}\to\MCset{\mathfrak{g}/F_{m+1}\mathfrak{g}},
\end{equation}
from level $(m+1)$ structures to level $m$ structures. The first step is to characterize the nonempty fibers of this map in terms of a certain cohomology theory. First note that if $x_0\in\mathfrak{g}/F_1\mathfrak{g}$ is a level~$0$ structure then the adjoint map; \[ \{x_0,-\}: F_k\mathfrak{g}/F_{k+1}\mathfrak{g} \to F_k\mathfrak{g}/F_{k+1}\mathfrak{g},\quad k\geq 0; \]
is a differential.

\begin{prop} \label{prop_obstructionstruct}
There is a well-defined obstruction map on the moduli space of level $m$ quantizations of~$x_0$:
\begin{displaymath}
\begin{array}{ccc}
\MCfibermod{\mathfrak{g}/F_{m+1}\mathfrak{g}}{x_0} & \to & H^1\left(F_{m+1}\mathfrak{g}/F_{m+2}\mathfrak{g},\{x_0,-\}\right), \\
x_m & \mapsto & \Obs{x_m}:= dx_m+\frac{1}{2}\{x_m,x_m\}.
\end{array}
\end{displaymath}
A level $m$ structure $x_m\in\MCfiberset{\mathfrak{g}/F_{m+1}\mathfrak{g}}{x_0}$ may be lifted to a level $(m+1)$ structure under the map \eqref{eqn_leveldownmap} if and only if the cohomology class $\Obs{x_m}$ vanishes.
\end{prop}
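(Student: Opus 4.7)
The plan is the classical obstruction-theory argument for Maurer--Cartan elements in a completely filtered dg Lie algebra, adapted to the odd-bracket conventions in use here. The core idea is to pick any lift $\tilde{x}_m\in\mathfrak{g}^0$ of the level-$m$ structure, form the ``curvature'' $\kappa := d\tilde{x}_m + \tfrac{1}{2}\{\tilde{x}_m,\tilde{x}_m\}$, which by the level-$m$ Maurer--Cartan equation automatically lies in $F_{m+1}\mathfrak{g}^1$, and prove that its class in $H^1(F_{m+1}\mathfrak{g}/F_{m+2}\mathfrak{g},\{x_0,-\})$ is independent of every choice made and vanishes if and only if $x_m$ lifts. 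At the outset I would also confirm that the associated graded pieces really are complexes: the strong-filtration condition $d(F_p)\subset F_{p+1}$ forces $d$ to act as zero on $F_k/F_{k+1}$, so only the $\{x_0,-\}$ term of any twisted differential survives, and $\{x_0,-\}^2=0$ follows from the classical master equation $\{x_0,x_0\}=0$ and the graded Jacobi identity.

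Well-definedness of $\Obs{x_m}$ breaks into two stages. For independence of the lift $\tilde{x}_m$, I would compute that replacing $\tilde{x}_m$ by $\tilde{x}_m + y$ with $y\in F_{m+1}\mathfrak{g}^0$ changes $\kappa$ by $dy + \{\tilde{x}_m,y\} + \tfrac{1}{2}\{y,y\}$; modulo $F_{m+2}$ the $dy$ term vanishes since $d(F_{m+1})\subset F_{m+2}$, the $\tfrac12\{y,y\}$ term vanishes since $\{F_{m+1},F_{m+1}\}\subset F_{2m+2}\subset F_{m+2}$, and the remaining term $\{\tilde{x}_m,y\}\equiv\{x_0,y\}$ because $\tilde{x}_m - x_0\in F_1\mathfrak{g}$. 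The change is therefore a coboundary. For descent to the moduli space, I would show that a gauge transformation $\exp(\eta)\cdot x_m$ with $\eta\in F_1\mathfrak{g}^1$ alters $\Obs{x_m}$ only by a coboundary as well; this is a standard calculation using the action formula \eqref{eqn_MCact}, again relying on filtration counting to isolate the contribution in $F_{m+1}/F_{m+2}$.

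The cocycle property is the main technical step. The key identity is the Bianchi-like relation $d\kappa + \{\tilde{x}_m,\kappa\}=0$, which follows from $d^2=0$ together with a graded Jacobi identity applied to $\{\tilde{x}_m,\{\tilde{x}_m,\tilde{x}_m\}\}$; this is precisely the place where the odd-bracket signs must be tracked carefully, since the Maurer--Cartan element $\tilde{x}_m$ sits in even degree but $\{-,-\}$ has odd degree. Reducing this identity mod $F_{m+2}$, the $d\kappa$ term lands in $F_{m+2}$ because $\kappa\in F_{m+1}$, and $\{\tilde{x}_m,\kappa\}$ reduces to $\{x_0,\kappa\}$ by the same filtration argument used above. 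This gives $\{x_0,\Obs{x_m}\}=0$ in $F_{m+1}/F_{m+2}$, so $\Obs{x_m}$ is genuinely a cocycle.

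Finally, the ``lifts iff obstruction vanishes'' equivalence is essentially formal once the preceding is in place. If $x_m$ admits a lift to a level-$(m+1)$ structure $x_{m+1}$, then one may take $\tilde{x}_m := x_{m+1}$ (viewed in $\mathfrak{g}^0$ via a further lift) and $\Obs{x_m}$ vanishes in $F_{m+1}/F_{m+2}$ on the nose. Conversely, if $\Obs{\tilde{x}_m}\equiv \{x_0,y\}\pmod{F_{m+2}}$ for some $y\in F_{m+1}\mathfrak{g}^0$, then the independence-of-lift calculation shows that $\tilde{x}_m - y$ reduced mod $F_{m+2}$ is a valid level-$(m+1)$ structure. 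The main obstacle throughout will be handling the odd-bracket graded Jacobi identity correctly in the Bianchi step; once that sign bookkeeping is verified, everything else reduces to careful accounting with the strong-filtration axioms.
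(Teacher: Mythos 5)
Your proposal is correct and follows essentially the same route as the paper's proof: choose a representative $\tilde{x}_m$, note its curvature lies in $F_{m+1}\mathfrak{g}$, check independence of the representative and of the gauge action by filtration counting (using $d(F_{m+1})\subset F_{m+2}$, $\{F_{m+1},F_{m+1}\}\subset F_{m+2}$, and $\tilde{x}_m\equiv x_0 \bmod F_1$), verify the cocycle property, and read off the lifting criterion from the perturbation $\tilde{x}_m\mapsto\tilde{x}_m+\xi_{m+1}$. The only difference is cosmetic: where the paper cites the literature for the cocycle step, you supply the standard Bianchi-identity argument directly, and your treatment of gauge invariance is a slightly weaker (but sufficient) version of the paper's observation that $\exp(\{y,-\})$ moves the curvature only by terms in $F_{m+2}$.
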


\begin{proof}
Picking a choice of representative $\tilde{x}_m\in\mathfrak{g}^0$ for a level $m$ structure $x_m\in\MCfiberset{\mathfrak{g}/F_{m+1}\mathfrak{g}}{x_0}$ we have tautologically that
\[ d\tilde{x}_m+\frac{1}{2}\{\tilde{x}_m,\tilde{x}_m\}\in F_{m+1}\mathfrak{g}. \]
Furthermore, replacing $\tilde{x}_m$ with a different choice of representative only changes the above expression by a $\{x_0,-\}$-coboundary, after killing terms in $F_{m+2}\mathfrak{g}$ using \eqref{eqn_BVfiltration}. Additionally, if we act on $\tilde{x}_m$ using \eqref{eqn_MCact}, then since the adjoint action is a morphism of Lie algebras, the above expression changes through the application of the operator $\exp(\{y,-\})$, which only contributes terms coming from $F_{m+2}\mathfrak{g}$. The only step that requires any work is proving that the above expression is a $\{x_0,-\}$-cocycle. However, this argument proceeds -- mutatis mutandis -- exactly according to Theorem 5.1 of \cite{hamQME}, but cf. also the primary Proposition 2 in Chapter I of \cite{gerstdeform}.

Given $\xi_{m+1}\in F_{m+1}\mathfrak{g}^0$ consider the lift $\tilde{x}_{m+1}:=\tilde{x}_m + \xi_{m+1}$ of $\tilde{x}_m$, then
\begin{equation} \label{eqn_liftQME}
d\tilde{x}_{m+1} + \frac{1}{2}\{\tilde{x}_{m+1},\tilde{x}_{m+1}\} = d\tilde{x}_m + \frac{1}{2}\{\tilde{x}_m,\tilde{x}_m\} + \{x_0,\xi_{m+1}\} \quad\mod F_{m+2}\mathfrak{g}.
\end{equation}
Hence to lift a level $m$ structure, the obstruction must vanish.
\end{proof}

Now suppose that we are given a level $m$ structure $x_m\in\MCfiberset{\mathfrak{g}/F_{m+1}\mathfrak{g}}{x_0}$ and let
\[ \MCfiberset{\mathfrak{g}/F_{m+2}\mathfrak{g}}{x_m} \]
denote the fiber of the map \eqref{eqn_leveldownmap} over the point $x_m$. Then $F_{m+1}\mathfrak{g}^1/F_{m+2}\mathfrak{g}^1$ acts on this fiber via \eqref{eqn_MCact} and we denote the corresponding moduli space by
\[ \MCfibermod{\mathfrak{g}/F_{m+2}\mathfrak{g}}{x_m}, \]
which we call the \emph{moduli space of extensions} of $x_m$. In fact a simple calculation shows that
\begin{equation} \label{eqn_liftaction}
\exp(y)\cdot x_{m+1} = x_{m+1} + \{y,x_0\}
\end{equation}
for $x_{m+1}\in\MCfiberset{\mathfrak{g}/F_{m+2}\mathfrak{g}}{x_m}$ and $y\in F_{m+1}\mathfrak{g}^1/F_{m+2}\mathfrak{g}^1$.

\begin{prop} \label{prop_freeactonstructures}
The cohomology group $H^0\left(F_{m+1}\mathfrak{g}/F_{m+2}\mathfrak{g},\{x_0,-\}\right)$ acts freely and transitively on the moduli space $\MCfibermod{\mathfrak{g}/F_{m+2}\mathfrak{g}}{x_m}$ of extensions of $x_m$ by
\[ \xi_{m+1}\cdot x_{m+1} = x_{m+1} + \xi_{m+1}. \]
\end{prop}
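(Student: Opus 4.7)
The plan is to verify three things: that the action is well-defined, transitive, and free. All three reduce to direct calculations using the strong filtration axioms \eqref{eqn_BVfiltration} together with the formula \eqref{eqn_liftaction} and the Maurer-Cartan equation. The grading observation that will be used throughout is that since $\{-,-\}$ is odd and $x_0$ has degree $0$, the operator $\{x_0,-\}$ raises degree by one, so the relevant degree-$0$ classes really do live in $H^0$.

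First I would verify well-definedness of the formula $\xi_{m+1}\cdot x_{m+1} = x_{m+1} + \xi_{m+1}$. Given a lift $\tilde{x}_{m+1}$ of $x_{m+1}$ to $\mathfrak{g}^0/F_{m+2}\mathfrak{g}^0$ and a representative $\tilde\xi_{m+1}\in F_{m+1}\mathfrak{g}^0/F_{m+2}\mathfrak{g}^0$, I expand
\[
d(\tilde{x}_{m+1}+\tilde\xi_{m+1}) + \tfrac{1}{2}\{\tilde{x}_{m+1}+\tilde\xi_{m+1},\tilde{x}_{m+1}+\tilde\xi_{m+1}\}.
\]
Using $d(F_{m+1}\mathfrak{g})\subset F_{m+2}\mathfrak{g}$, $\{F_{m+1}\mathfrak{g},F_{m+1}\mathfrak{g}\}\subset F_{2m+2}\mathfrak{g}\subset F_{m+2}\mathfrak{g}$, and $\{x_{m+1},\tilde\xi_{m+1}\}\equiv\{x_0,\tilde\xi_{m+1}\}$ modulo $F_{m+2}\mathfrak{g}$, this simplifies modulo $F_{m+2}\mathfrak{g}$ to
\[
d\tilde{x}_{m+1} + \tfrac{1}{2}\{\tilde{x}_{m+1},\tilde{x}_{m+1}\} + \{x_0,\tilde\xi_{m+1}\} = \{x_0,\tilde\xi_{m+1}\},
\]
so the sum is again an extension precisely when $\xi_{m+1}$ is a $\{x_0,-\}$-cocycle. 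Next, the action descends to cohomology classes: if $\xi_{m+1}=\{x_0,y\}$ for $y\in F_{m+1}\mathfrak{g}^1/F_{m+2}\mathfrak{g}^1$, then by \eqref{eqn_liftaction} (and the graded antisymmetry of the odd bracket) the extension $x_{m+1}+\xi_{m+1}$ is gauge equivalent to $x_{m+1}$, so a coboundary acts trivially on $\MCfibermod{\mathfrak{g}/F_{m+2}\mathfrak{g}}{x_m}$.

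For transitivity, given two extensions $x_{m+1}, x'_{m+1}$, both reducing to $x_m$, their difference (after choosing representatives) lies in $F_{m+1}\mathfrak{g}^0/F_{m+2}\mathfrak{g}^0$. Subtracting the two Maurer-Cartan identities and arguing exactly as in the well-definedness calculation above shows that this difference is a $\{x_0,-\}$-cocycle, whose class in $H^0$ sends $[x_{m+1}]$ to $[x'_{m+1}]$.

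For freeness, suppose a cocycle $\xi_{m+1}$ acts trivially in the moduli space, so there exists $y\in F_1\mathfrak{g}^1$ with $\exp(y)\cdot x_{m+1} = x_{m+1}+\xi_{m+1}$ in $\MCfiberset{\mathfrak{g}/F_{m+2}\mathfrak{g}}{x_m}$. Since both sides agree modulo $F_{m+1}\mathfrak{g}$ with $x_m$, and since the component of $y$ in $\mathfrak{g}^1/F_{m+1}\mathfrak{g}^1$ must already preserve $x_m$ trivially at each stage, an inductive argument (comparing graded pieces with respect to the filtration, using \eqref{eqn_MCact}) shows that one may assume $y\in F_{m+1}\mathfrak{g}^1/F_{m+2}\mathfrak{g}^1$; then \eqref{eqn_liftaction} gives $\xi_{m+1}=\{y,x_0\}=\pm\{x_0,y\}$, i.e.\ a coboundary. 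The main obstacle is the inductive reduction of an arbitrary gauge equivalence to one lying in $F_{m+1}\mathfrak{g}^1$, since elements of lower filtration degree in $y$ need not preserve the given extension individually; this is handled by peeling off one filtration piece at a time and replacing the original $y$ by its Baker-Campbell-Hausdorff product with a compensating term until only the top filtration piece remains, precisely the technique familiar from the proof of the Goldman-Millson theorem.
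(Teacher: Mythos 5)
Your verification that the formula is well defined and transitive is correct and is essentially the paper's own argument: it amounts to Equation \eqref{eqn_liftQME}, which says that any two lifts of $x_m$ to $\mathfrak{g}/F_{m+2}\mathfrak{g}$ differ by a $\{x_0,-\}$-cocycle in $F_{m+1}\mathfrak{g}^0/F_{m+2}\mathfrak{g}^0$, and the fact that coboundaries act trivially is Equation \eqref{eqn_liftaction}.

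The freeness step, however, has a genuine gap, and it comes from the definition of the moduli space of extensions. In the paper, $\MCfibermod{\mathfrak{g}/F_{m+2}\mathfrak{g}}{x_m}$ is the quotient of the fiber $\MCfiberset{\mathfrak{g}/F_{m+2}\mathfrak{g}}{x_m}$ by the action of $F_{m+1}\mathfrak{g}^1/F_{m+2}\mathfrak{g}^1$ \emph{only}, not by the image of the full gauge group $F_1\mathfrak{g}^1$. With that definition freeness is immediate from \eqref{eqn_liftaction}: an element $y\in F_{m+1}\mathfrak{g}^1/F_{m+2}\mathfrak{g}^1$ moves $x_{m+1}$ exactly by the coboundary $\{y,x_0\}$, so two extensions are identified precisely when their difference is a coboundary, and no Goldman--Millson-type reduction is needed. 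You instead allow $y\in F_1\mathfrak{g}^1$ and claim one may reduce to $y\in F_{m+1}\mathfrak{g}^1/F_{m+2}\mathfrak{g}^1$ by peeling off lower filtration pieces; this reduction fails in general. An element $y$ of lower filtration order that stabilizes $x_m$ cannot be compensated away: its effect on a chosen extension, $x_{m+1}-\exp(y)\cdot x_{m+1}$, is precisely the obstruction class $\Obs{y}$ of Proposition \ref{prop_obstructionmorph} (the obstruction to extending the self-morphism $y$ of $x_m$ to a self-morphism of $x_{m+1}$), which is a $\{x_0,-\}$-cocycle that need not be a coboundary. Consequently, for the quotient by the full gauge group the $H^0$-action is transitive but its isotropy is the subgroup of such obstruction classes, so freeness can genuinely fail; the statement you set out to prove is stronger than the proposition and is false without further hypotheses.
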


\begin{proof}
This follows from Equation \eqref{eqn_liftQME} and Equation~\eqref{eqn_liftaction}.
\end{proof}

\subsubsection{Examples of associated cohomology theory}

\begin{example}
For $\ComBV{V}$ with its strong filtration by powers of $\hbar$
and a cyclic $\li$-structure $l$, which solves the classical master equation,
we may use \eqref{eqn_CEbracketdiff} to identify the cohomology theory associated to the corresponding level 0 structure $x_0$ as the Chevalley-Eilenberg cohomology of the $\li$-algebra,
\begin{equation} \label{eqn_CEfiltrationcohomology}
\left(F_m\ComBV{V}/F_{m+1}\ComBV{V},\{x_0,-\}\right) = \left(\csym V^*,-l\right).
\end{equation}
\end{example}

\begin{example}
For $\NonComBV{V}$ with its strong filtration by order and a cyclic $\ai$-structure $m$ on the space $A:=\Sigma^{-1}V$ given by a solution $x_0:=\tilde{m}$ to the classical master equation,
we may use \eqref{eqn_Hochbracketdiff} to identify the relevant cohomology theories that are associated to the level $0$ structure $x_0$ in terms of the cyclic cohomology of the $\ai$-algebra~$A$:
\begin{equation} \label{eqn_Hochfiltrationcohomology}
\left(F_m\NonComBV{V}/F_{m+1}\NonComBV{V},-\{x_0,-\}\right) = \prod_{\begin{subarray}{c} i,j,k\geq 0: \\ j+k>0 \\ 2i+j+k=m+1 \end{subarray}}\gamma^i\nu^j\left[\CycHoch{A}^{\cotimes k}\right]_{\symg{k}}.
\end{equation}
\end{example}

\begin{example}
Recall that any cyclic $\ai$-structure on $A$ determines a cyclic commutator $\li$-structure $l_N$ on $\gl{N}{A}$ that is $\gl{N}{\gf}$-invariant and which in turn yields a solution $x_0$
to the classical master equation for $\ComBV{\Sigma\gl{N}{A}}^{\gl{N}{\gf}}$. The cohomology theory associated to this solution is
\begin{equation} \label{eqn_invfiltrationcohomology}
\left(F_m\ComBV{\Sigma\gl{N}{A}}^{\gl{N}{\gf}}/F_{m+1}\ComBV{\Sigma\gl{N}{A}}^{\gl{N}{\gf}},\{x_0,-\}\right) = \left(\left[\csym\Sigma\gl{N}{A}^*\right]^{\gl{N}{\gf}},-l_N\right).
\end{equation}
Provided that the $\ai$-algebra $A$ is unital, it follows from Theorem \ref{thm_invariantqiso} that this complex computes the Chevalley-Eilenberg cohomology of~$\gl{N}{A}$.
\end{example}

\subsubsection{Finite level morphisms}

Having just described the obstruction theory that applies to producing solutions of the quantum master equation, we now repeat this process for morphisms between two quantizations of a solution $x_0$ to the classical master equation.

\begin{defi}
Given two level $m$ structures $x_m,x'_m\in\MCfiberset{\mathfrak{g}/F_{m+1}\mathfrak{g}}{x_0}$, a \emph{level $m$ morphism} from $x_m$ to $x'_m$ is an element
\[ y_m\in F_1\mathfrak{g}^1/F_{m+1}\mathfrak{g}^1 \]
such that
\[ x'_m = \exp(y_m)\cdot x_m. \]
We denote the set consisting of all such level $m$ morphisms by
\[ \Morphset{m}{x_m}{x'_m}. \]
Two level $m$ morphisms $y_m$, $y'_m$ from $x_m$ to $x'_m$ are said to be \emph{homotopy equivalent} if
\[ y_m = y'_m\star(d\eta +\{\eta,x_m\}) \]
for some $\eta\in F_1\mathfrak{g}^0/F_{m+1}\mathfrak{g}^0$. The set of homotopy equivalence classes of level $m$ morphisms will be denoted by
\[ \Morphmod{m}{x_m}{x'_m}. \]
\end{defi}

Suppose that $x_{m+1}$ and $x'_{m+1}$ are two level $(m+1)$ quantizations of $x_0$ that lift level $m$ quantizations $x_m$ and $x'_m$ respectively. The quotient map induces a natural map
\begin{equation} \label{eqn_leveldownmorphmap}
\Morphset{m+1}{x_{m+1}}{x'_{m+1}} \to \Morphset{m}{x_m}{x'_m}
\end{equation}
from level $(m+1)$ morphisms to level $m$ morphisms. As before, we wish to characterize the nonempty fibers of this map.

\begin{prop} \label{prop_obstructionmorph}
There is a well-defined obstruction map on the set of homotopy equivalence classes of level $m$ morphisms:
\begin{displaymath}
\begin{array}{ccc}
\Morphmod{m}{x_m}{x'_m} & \to & H^0\left(F_{m+1}\mathfrak{g}/F_{m+2}\mathfrak{g},\{x_0,-\}\right), \\
y_m & \mapsto & \Obs{y_m} := x'_{m+1} - \exp(y_m)\cdot x_{m+1}.
\end{array}
\end{displaymath}
A level $m$ morphism $y_m\in\Morphset{m}{x_m}{x'_m}$ may be lifted to a level $(m+1)$ morphism under the map \eqref{eqn_leveldownmorphmap} if and only if the cohomology class $\Obs{y_m}$ vanishes.
\end{prop}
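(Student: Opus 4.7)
The plan is to adapt the obstruction-theoretic argument used for structures in Proposition \ref{prop_obstructionstruct} to the setting of morphisms. Throughout I would fix lifts $\tilde{y}_m \in F_1\mathfrak{g}^1$ of $y_m$ and $\tilde{x}_{m+1}, \tilde{x}'_{m+1} \in \mathfrak{g}^0$ of the level $(m+1)$ structures, and work modulo $F_{m+2}\mathfrak{g}$ throughout.

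First I would show that $\Obs{y_m}$ automatically lies in $F_{m+1}\mathfrak{g}^0$ and represents a $\{x_0,-\}$-cohomology class. The containment is immediate: both $\tilde{x}'_{m+1}$ and $\exp(\tilde{y}_m) \cdot \tilde{x}_{m+1}$ reduce modulo $F_{m+1}$ to the common level $m$ element $x'_m = \exp(y_m) \cdot x_m$. Moreover, both are Maurer-Cartan lifts of $x'_m$ to level $m+1$ (the action \eqref{eqn_MCact} preserves $\MCset{-}$), so by Proposition \ref{prop_freeactonstructures} their difference is a $\{x_0,-\}$-cocycle in $F_{m+1}\mathfrak{g}^0/F_{m+2}\mathfrak{g}^0$. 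Independence of the choice of lift $\tilde{y}_m$ then follows from a short computation: if $\tilde{y}_m$ is replaced by $\tilde{y}_m + z$ with $z \in F_{m+1}\mathfrak{g}^1$, then \eqref{eqn_liftaction} together with the Baker-Campbell-Hausdorff correction — which lies in $F_{m+2}$ thanks to \eqref{eqn_BVfiltration} — shows that $\exp(\tilde{y}_m) \cdot x_{m+1}$ shifts by $\{z, x_0\}$ modulo $F_{m+2}$. Hence $\Obs{y_m}$ shifts by a $\{x_0,-\}$-coboundary. A similar check handles alternative lifts of the level $(m+1)$ quantizations.

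Next I would establish homotopy invariance. Given a witness $\eta \in F_1\mathfrak{g}^0/F_{m+1}\mathfrak{g}^0$ to $y_m \sim y'_m$, lift it to $\tilde\eta \in F_1\mathfrak{g}^0$ and form $w := d\tilde\eta + \{\tilde\eta, \tilde{x}_{m+1}\} \in F_1\mathfrak{g}^1$. The crux of the argument — and the main technical obstacle — is the identity
\[ \exp(w) \cdot \tilde{x}_{m+1} \equiv \tilde{x}_{m+1} \pmod{F_{m+2}\mathfrak{g}}, \]
which is the familiar statement that a twisted coboundary at a Maurer-Cartan element generates a trivial gauge flow. Its verification requires unpacking the series \eqref{eqn_MCact}, invoking the Maurer-Cartan equation for $\tilde{x}_{m+1}$ modulo $F_{m+2}$, and carefully tracking signs coming from the odd Lie bracket; the analogous classical calculation appears in \cite{goldmill}. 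Granting this identity, \eqref{eqn_BCHformula} would then yield $\exp(\tilde{y}_m \star w) \cdot x_{m+1} \equiv \exp(\tilde{y}_m) \cdot x_{m+1}$ modulo $F_{m+2}$, so that $\Obs{y'_m}$ and $\Obs{y_m}$ coincide for these particular choices of lifts, hence agree in cohomology by the previous step.

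Finally, I would deduce the lifting criterion. The ``only if'' direction is tautological, since a genuine level $(m+1)$ lift $y_{m+1}$ of $y_m$ forces $\Obs{y_m}$ to vanish on the nose. Conversely, if $\Obs{y_m} = \{x_0, z\}$ is a coboundary, replacing $\tilde{y}_m$ by $\tilde{y}_m \pm z$ (the sign fixed by graded antisymmetry of the odd bracket) kills the obstruction by the very calculation used to establish independence of lifts, so the modified lift descends to a level $(m+1)$ morphism from $x_{m+1}$ to $x'_{m+1}$. Aside from the trivial-flow identity of the previous step, which is the only genuinely delicate point, the entire argument parallels the structures-level proof of Proposition \ref{prop_obstructionstruct} and the standard Goldman-Millson reasoning, with signs and filtration degrees adjusted for the odd-bracket conventions of this paper.
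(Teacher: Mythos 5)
Your proposal is correct and takes essentially the same route as the paper's proof: choose a representative lift of $y_m$, observe the difference $x'_{m+1}-\exp(\tilde y_m)\cdot x_{m+1}$ lands in $F_{m+1}\mathfrak{g}/F_{m+2}\mathfrak{g}$ and is a $\{x_0,-\}$-cocycle via Proposition \ref{prop_obstructionstruct}/\ref{prop_freeactonstructures}, show a change of representative shifts it by $\{x_0,\xi_{m+1}\}$, and read off the lifting criterion from that same formula. The only divergence is that the step you single out as the main technical obstacle --- that $\exp(d\tilde\eta+\{\tilde\eta,x\})\cdot x = x$ --- is actually immediate rather than delicate: with $y=d\tilde\eta+\{\tilde\eta,x\}$ the generator $dy+\{y,x\}$ is the square of the $x$-twisted differential applied to $\tilde\eta$, hence vanishes by the Maurer--Cartan equation, so every term of the series \eqref{eqn_MCact} is zero and the paper accordingly treats homotopy invariance as trivial.
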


\begin{proof}
Choosing a representative $\tilde{y}_m\in F_1\mathfrak{g}^1/F_{m+2}\mathfrak{g}^1$ for $y_m\in F_1\mathfrak{g}^1/F_{m+1}\mathfrak{g}^1$, we have tautologically that
\[ x'_{m+1} - \exp(\tilde{y}_m)\cdot x_{m+1}\in F_{m+1}\mathfrak{g}/F_{m+2}\mathfrak{g}. \]
Since this is the difference between two extensions of $x'_m$, it follows from Proposition \ref{prop_freeactonstructures} that this is a $\{x_0,-\}$-cocycle. Now if
\[ \tilde{y}'_m = \tilde{y}_m+\xi_{m+1},\quad\xi_{m+1}\in F_{m+1}\mathfrak{g}^1/F_{m+2}\mathfrak{g}^1; \]
is another choice of representative for $y_m$ then using \eqref{eqn_BVfiltration},
\begin{equation} \label{eqn_morphliftobs}
x'_{m+1} - \exp(\tilde{y}'_m)\cdot x_{m+1} = x'_{m+1} - \exp(\tilde{y}_m)\cdot x_{m+1} + \{x_0,\xi_{m+1}\}.
\end{equation}
Hence the expression for $\Obs{y_m}$ differs only by a coboundary. It is trivial to observe that this expression does not change under a homotopy equivalence.

Now it follows from Equation \eqref{eqn_morphliftobs} that the level $m$ morphism $y_m$ lifts to a level $(m+1)$ morphism $y_{m+1}=\tilde{y}_m+\xi_{m+1}$ if and only if the cohomology class $\Obs{y_m}$ vanishes.
\end{proof}

Given a level $m$ morphism $y_m\in\Morphset{m}{x_m}{x'_m}$, denote the fiber of the map \eqref{eqn_leveldownmorphmap} over the point $y_m$ by
\[ \Morphfiberset{m+1}{x_{m+1}}{x'_{m+1}}{y_m}. \]
Define two extensions $y_{m+1}$ and $y'_{m+1}$ in this fiber to be homotopy equivalent if for some $\eta\in F_{m+1}\mathfrak{g}^0/F_{m+2}\mathfrak{g}^0$, we have
\begin{equation} \label{eqn_morphliftact}
\begin{split}
y'_{m+1} &= y_{m+1}\star\left(d\eta+\{\eta,x_{m+1}\}\right)\\
&= y_{m+1} + \{x_0,\eta\}.
\end{split}
\end{equation}
Denote the set of homotopy equivalence classes of the fiber by
\[ \Morphfibermod{m+1}{x_{m+1}}{x'_{m+1}}{y_m}. \]

\begin{prop} \label{prop_freeactonmorphisms}
The cohomology group $H^1\left(F_{m+1}\mathfrak{g}/F_{m+2}\mathfrak{g},\{x_0,-\}\right)$ acts freely and transitively on the set $\Morphfibermod{m+1}{x_{m+1}}{x'_{m+1}}{y_m}$ of homotopy equivalence classes of extensions of $y_m$ by
\[ \xi_{m+1}\cdot y_{m+1} = y_{m+1} + \xi_{m+1}. \]
\end{prop}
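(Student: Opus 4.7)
The plan is to adapt Proposition \ref{prop_freeactonstructures} to the morphism setting, where the essential new ingredient is the homotopy equivalence relation. The bulk of the needed computation is already packaged in \eqref{eqn_morphliftobs} and \eqref{eqn_morphliftact}, so what remains is to assemble these into the three standard torsor checks: the prescribed formula descends to a well-defined action of $H^1$, it is transitive on the set of homotopy classes, and it is free.

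First I would verify that a cocycle representative $\xi_{m+1} \in F_{m+1}\mathfrak{g}^1/F_{m+2}\mathfrak{g}^1$ acting by $\xi_{m+1} \cdot y_{m+1} = y_{m+1} + \xi_{m+1}$ produces another lift of $y_m$. Applying \eqref{eqn_morphliftobs} with $y_{m+1}$ replaced by the perturbed element $y_{m+1} + \xi_{m+1}$, the discrepancy $x'_{m+1} - \exp(y_{m+1}+\xi_{m+1}) \cdot x_{m+1}$ equals $x'_{m+1} - \exp(y_{m+1}) \cdot x_{m+1} + \{x_0, \xi_{m+1}\}$, which vanishes because $y_{m+1}$ is a lift and $\xi_{m+1}$ is a cocycle. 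Read in reverse, the same identity gives transitivity on the fiber: any two lifts $y_{m+1}, y'_{m+1}$ of $y_m$ differ by an element $\xi_{m+1} \in F_{m+1}\mathfrak{g}^1/F_{m+2}\mathfrak{g}^1$, and the simultaneous vanishing of both discrepancies forces $\{x_0, \xi_{m+1}\} = 0$, so the class $[\xi_{m+1}] \in H^1$ carries one to the other.

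For the descent to $H^1$ and freeness of the induced action, I would invoke \eqref{eqn_morphliftact} directly: two lifts $y_{m+1}$ and $y_{m+1} + \xi_{m+1}$ are homotopy equivalent precisely when $\xi_{m+1} = \{x_0, \eta\}$ for some $\eta \in F_{m+1}\mathfrak{g}^0/F_{m+2}\mathfrak{g}^0$, which is precisely the condition for $\xi_{m+1}$ to be a $\{x_0,-\}$-coboundary. This single observation handles both well-definedness of the descended action (cohomologous cocycles produce homotopy equivalent lifts) and its freeness (only coboundaries can stabilize a homotopy class).

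The one delicate point will be justifying that the Baker-Campbell-Hausdorff formula collapses to ordinary addition at filtration level $m+1$, as already assumed implicitly in \eqref{eqn_morphliftobs} and \eqref{eqn_morphliftact}. Since $\xi_{m+1} \in F_{m+1}\mathfrak{g}$ and $y_{m+1} \in F_1\mathfrak{g}$, all iterated brackets involving $\xi_{m+1}$ with $y_{m+1}$ or with $x_{m+1}$ land in $F_{m+2}\mathfrak{g}$ by \eqref{eqn_BVfiltration}, so $\star$-composition reduces to $+$ modulo $F_{m+2}\mathfrak{g}$; likewise $d(F_{m+1}\mathfrak{g}) \subset F_{m+2}\mathfrak{g}$ causes the $d\eta$ term arising from \eqref{eqn_MCact} to disappear and forces $\{\eta, x_{m+1}\}$ to reduce to $\{\eta, x_0\}$ modulo $F_{m+2}\mathfrak{g}$. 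Once these collapses are verified, no further calculation is required beyond that already carried out in Proposition \ref{prop_obstructionmorph}.
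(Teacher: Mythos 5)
Your proposal is correct and follows essentially the same route as the paper, whose proof consists precisely of citing Equations \eqref{eqn_morphliftobs} and \eqref{eqn_morphliftact}; you simply unpack these into the standard torsor checks (well-definedness, transitivity via vanishing of both discrepancies, freeness via the coboundary characterization of homotopy equivalence). Your added verification that the Baker--Campbell--Hausdorff product collapses to addition modulo $F_{m+2}\mathfrak{g}$ is a correct justification of facts the paper already established in deriving those two equations.
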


\begin{proof}
This follows from Equation \eqref{eqn_morphliftobs} and Equation \eqref{eqn_morphliftact}.
\end{proof}

\subsection{Correspondence of moduli spaces}

In this section we use the results of the preceding subsection to establish sufficient criteria for a map between two strongly filtered differential graded Lie algebras to produce a one-to-one correspondence between the corresponding moduli spaces of quantizations.

Suppose that $\Phi:\mathfrak{g}\to\mathfrak{h}$
is a filtration-respecting map of strongly filtered differential graded Lie algebras. Given a solution $x_0\in\MCset{\mathfrak{g}/F_1\mathfrak{g}}$ to the classical master equation for $\mathfrak{g}$, we have a solution
\[ x'_0:=\Phi(x_0)\in\MCset{\mathfrak{h}/F_1\mathfrak{h}} \]
to the classical master equation for $\mathfrak{h}$. The map $\Phi$ induces a map
\begin{equation} \label{eqn_modspcmap}
\Phi:\MCfibermod{\mathfrak{g}}{x_0}\to\MCfibermod{\mathfrak{h}}{x'_0}
\end{equation}
between the moduli spaces of quantizations of these solutions.

\begin{theorem} \label{thm_modspccorrespondence}
If the map $\Phi:\mathfrak{g}\to\mathfrak{h}$ induces isomorphisms
\[ \Phi: H^{\bullet}\left(F_m\mathfrak{g}/F_{m+1}\mathfrak{g},\{x_0,-\}\right) \longrightarrow H^{\bullet}\left(F_m\mathfrak{h}/F_{m+1}\mathfrak{h},\{\Phi(x_0),-\}\right) \]
for all $m\geq 1$, then \eqref{eqn_modspcmap} is a bijection.
\end{theorem}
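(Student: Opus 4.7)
The plan is to prove a stronger finite-level statement by induction on $m$, namely that for every $m \geq 0$ the map $\Phi$ induces a bijection
\[ \Phi_m : \MCfibermod{\mathfrak{g}/F_{m+1}\mathfrak{g}}{x_0} \to \MCfibermod{\mathfrak{h}/F_{m+1}\mathfrak{h}}{\Phi(x_0)}, \]
and then to pass to the limit using completeness of the filtration. The base case $m=0$ is tautological since both sides are singletons. The inductive step will combine the obstruction/extension theory of Propositions \ref{prop_obstructionstruct}, \ref{prop_freeactonstructures}, \ref{prop_obstructionmorph}, \ref{prop_freeactonmorphisms} with the fact that $\Phi$, being a filtration-preserving dg Lie map, carries the obstruction class and the $H^0$-action in the domain to the corresponding objects in the codomain.

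For the inductive step, assume $\Phi_m$ is a bijection. To verify surjectivity of $\Phi_{m+1}$, start with $[y_{m+1}]$ and let $[y_m]$ be its image at level $m$, with unique preimage $[x_m]$ under $\Phi_m$. The class $\Obs{x_m} \in H^1(F_{m+1}\mathfrak{g}/F_{m+2}\mathfrak{g},\{x_0,-\})$ is sent by the induced map to $\Obs{\Phi(x_m)} = 0$ (since $y_m$ admits the lift $y_{m+1}$, and the obstruction depends only on the moduli class by Proposition~\ref{prop_obstructionstruct}); injectivity of $\Phi$ on $H^1$ then forces $\Obs{x_m}=0$, producing some lift $[x'_{m+1}]$. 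Both $\Phi_{m+1}([x'_{m+1}])$ and $[y_{m+1}]$ extend $[y_m]$, hence differ by a unique class $\eta \in H^0(F_{m+1}\mathfrak{h}/F_{m+2}\mathfrak{h},\{\Phi(x_0),-\})$ thanks to Proposition~\ref{prop_freeactonstructures}; surjectivity of $\Phi$ on $H^0$ provides $\xi$ with $\Phi(\xi)=\eta$, and then $\xi \cdot [x'_{m+1}]$ is the required preimage. Injectivity of $\Phi_{m+1}$ runs dually: two classes with the same image reduce to equal classes mod $F_{m+1}$ by the inductive hypothesis, so I can choose representatives with the same level-$m$ truncation, the discrepancy at level $m+1$ is a class $\xi \in H^0(F_{m+1}\mathfrak{g}/F_{m+2}\mathfrak{g},\{x_0,-\})$ acting freely, and injectivity of $\Phi$ on $H^0$ forces $\xi=0$.

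The limit step uses the identifications
\[ \MCset{\mathfrak{g}} = \invlim{m}{\MCset{\mathfrak{g}/F_{m+1}\mathfrak{g}}}, \qquad F_1\mathfrak{g}^1 = \invlim{m}{F_1\mathfrak{g}^1/F_{m+1}\mathfrak{g}^1}, \]
which hold because the strong filtration is complete and Hausdorff. For surjectivity at the limit, given $[y]$, the finite-level bijections $\Phi_m^{-1}$ produce a compatible tower of moduli classes $\{[x_m]\}$; I then inductively choose compatible representatives by picking any lift $x'_{m+1}$ of $[x_{m+1}]$, comparing its truncation with the previously selected $x_m$, and correcting $x'_{m+1}$ by an element of $F_1\mathfrak{g}^1/F_{m+2}\mathfrak{g}^1$ that lifts the equivalence at level $m$. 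For injectivity, if two quantizations have the same image, the finite-level injectivity yields, at each $m$, an equivalence $y_m \in F_1\mathfrak{g}^1/F_{m+1}\mathfrak{g}^1$; these can be assembled into a compatible sequence by, at each step, adjusting $y_{m+1}$ within $F_{m+1}\mathfrak{g}^1/F_{m+2}\mathfrak{g}^1$ using Proposition~\ref{prop_freeactonstructures} to absorb the residual level-$(m+1)$ discrepancy. The resulting element of $F_1\mathfrak{g}^1$ exhibits the desired equivalence.

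The main obstacle is not any single cohomological step -- each is a direct application of the relevant proposition combined with the hypothesis on $\Phi$ -- but the bookkeeping at the inductive step: keeping straight how obstruction classes behave as one changes representatives and traverses the action of $F_1\mathfrak{g}^1$ so that the formula $\Obs{x_m}=dx_m+\tfrac12\{x_m,x_m\}$ can be compared before and after applying $\Phi$. The secondary technical point is the lifting-of-representatives argument in the limit step, which requires exploiting the fact that the action of $F_1\mathfrak{g}^1/F_{m+2}\mathfrak{g}^1$ surjects onto the action of $F_1\mathfrak{g}^1/F_{m+1}\mathfrak{g}^1$ so that equivalences can be lifted one level at a time.
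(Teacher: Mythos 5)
Your finite-level induction is essentially sound (it is the same obstruction-theoretic bookkeeping the paper carries out, using Propositions \ref{prop_obstructionstruct} and \ref{prop_freeactonstructures} together with the $H^1$-injectivity and $H^0$-surjectivity of $\Phi$), but the passage from the finite-level bijections to the bijection \eqref{eqn_modspcmap} is where your argument has a genuine gap. The problem is that knowing two honest quantizations are gauge-equivalent at every finite level does not by itself produce a single element of $F_1\mathfrak{g}^1$ (resp.\ $F_1\mathfrak{h}^1$) conjugating one to the other: you must choose the level-$m$ equivalences \emph{compatibly}, and your proposed mechanism for doing so does not work. Adjusting $y_{m+1}$ by an element of $F_{m+1}\mathfrak{g}^1/F_{m+2}\mathfrak{g}^1$ leaves its truncation in $F_1\mathfrak{g}^1/F_{m+1}\mathfrak{g}^1$ unchanged, whereas the discrepancy you need to absorb is precisely that this truncation need not agree with (nor even be homotopic to) the previously fixed $y_m$. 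The correct question is whether the chosen $y_m$ itself extends to a morphism $x_{m+1}\to x'_{m+1}$; by Proposition \ref{prop_obstructionmorph} the obstruction $\Obs{y_m}\in H^0\left(F_{m+1}\mathfrak{g}/F_{m+2}\mathfrak{g},\{x_0,-\}\right)$ depends on the homotopy class of $y_m$, so the mere existence of \emph{some} level-$(m+1)$ equivalence between $x_{m+1}$ and $x'_{m+1}$ does not force it to vanish. The same unaddressed issue appears in your surjectivity step: having built $x$ with $\Phi(x)$ equivalent to $y$ at every level, you still owe a compatible system of equivalences before you may conclude $[\Phi(x)]=[y]$.

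The repair is essentially forced and is what the paper does: run a \emph{simultaneous} induction in which one constructs, at each level, both a representative $x_{m+1}$ extending $x_m$ and a morphism $y_{m+1}\in\Morphset{m+1}{\Phi(x_{m+1})}{x'_{m+1}}$ extending $y_m$, using the freedom to modify $x_{m+1}$ by an $H^0$-class (via surjectivity of $\Phi$ on $H^0$) and the morphism by elements of $F_{m+1}\mathfrak{h}^1/F_{m+2}\mathfrak{h}^1$; completeness then assembles these into a global $x$ and a global morphism $\Phi(x)\to x'$, giving surjectivity. For injectivity the paper does not compare towers of moduli classes at all: it fixes global $x,x'$ and shows that $\Phi:\MorphH{x}{x'}\to\MorphH{\Phi(x)}{\Phi(x')}$ is surjective, by the analogous induction on morphisms using Propositions \ref{prop_obstructionmorph} and \ref{prop_freeactonmorphisms} (here $H^0$-injectivity and $H^1$-surjectivity are what get used). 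If you want to keep your two-step outline, you would have to prove separately that the natural map $\MCfibermod{\mathfrak{g}}{x_0}\to\invlim{m}{\MCfibermod{\mathfrak{g}/F_{m+1}\mathfrak{g}}{x_0}}$ is injective, and that argument again requires the morphism-level obstruction theory, i.e.\ the same work.
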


\begin{proof}
The method of proof follows a standard technique, cf. Theorem 2.4 of \cite{goldmill}, which we demonstrate here for the sake of completeness. The first step is to prove that \eqref{eqn_modspcmap} is surjective.

Given a quantization $x'\in\MCfiberset{\mathfrak{h}}{x'_0}$, denote the corresponding finite level structures by
\[ x'_m\in\MCfiberset{\mathfrak{h}/F_{m+1}\mathfrak{h}}{x'_0}, \quad m\geq 0. \]
We construct, by induction on $m$, sequences of finite level structures and morphisms:
\begin{equation} \label{eqn_constructsequence}
\begin{array}{ll}
x_m \in\MCfiberset{\mathfrak{g}/F_{m+1}\mathfrak{g}}{x_0}, & m\geq 0; \\
y_m \in\Morphset{m}{\Phi(x_m)}{x'_m}, & m\geq 0;
\end{array}
\end{equation}
such that each $x_{m+1}$ and $y_{m+1}$ is an extension of $x_m$ and $y_m$ respectively. Since the filtrations on $\mathfrak{g}$ and $\mathfrak{h}$ are complete Hausdorff filtrations, these finite level structures and morphisms will lift to elements
\begin{displaymath}
\begin{split}
x & \in\MCfiberset{\mathfrak{g}}{x_0}, \\
y & \in\Morph{\Phi(x)}{x'}.
\end{split}
\end{displaymath}
This will show that \eqref{eqn_modspcmap} is surjective.

Assume that the structures \eqref{eqn_constructsequence} have been constructed up to level $m$. Since $x'_m$ lifts to a level $(m+1)$ structure $x'_{m+1}$, it follows from Proposition \ref{prop_obstructionstruct} that
\[ 0=\Obs{x'_m}=\Obs{\Phi(x_m)}=\Phi\left(\Obs{x_m}\right). \]
Since $\Phi$ is injective we have $\Obs{x_m}=0$, so $x_m$ must lift to a level $(m+1)$ structure $\bar{x}_{m+1}$.

Choose an element $\bar{y}_{m+1}\in F_1\mathfrak{h}^1/F_{m+2}\mathfrak{h}^1$ that lifts the element $y_m\in F_1\mathfrak{h}^1/F_{m+1}\mathfrak{h}^1$. Then,
\[ \Phi(\bar{x}_{m+1}) \qquad\text{and}\qquad \exp(-\bar{y}_{m+1})\cdot x'_{m+1}, \]
are both level $(m+1)$ structures extending $\Phi(x_m)$. Therefore by Proposition \ref{prop_freeactonstructures} there exists a $\{x'_0,-\}$-cocycle $\xi'_{m+1}\in F_{m+1}\mathfrak{h}^0/F_{m+2}\mathfrak{h}^0$ such that we have equivalent extensions,
\[ \Phi(\bar{x}_{m+1}) + \xi'_{m+1} \approx \exp(-\bar{y}_{m+1})\cdot x'_{m+1}. \]
Since $\Phi$ is surjective, there is a $\{x_0,-\}$-cocycle $\xi_{m+1}\in F_{m+1}\mathfrak{g}^0/F_{m+2}\mathfrak{g}^0$ such that
\[ \Phi(\bar{x}_{m+1}+\xi_{m+1}) = \Phi(\bar{x}_{m+1}) +\Phi(\xi_{m+1}) \approx \exp(-\bar{y}_{m+1})\cdot x'_{m+1}. \]
Hence there is a $\beta_{m+1}\in F_{m+1}\mathfrak{h}^1/F_{m+2}\mathfrak{h}^1$ such that
\begin{displaymath}
\begin{split}
\exp(\beta_{m+1})\cdot\Phi(\bar{x}_{m+1}+\xi_{m+1}) &= \exp(-\bar{y}_{m+1})\cdot x'_{m+1} \\
\exp(\bar{y}_{m+1}\star\beta_{m+1})\cdot\Phi(\bar{x}_{m+1}+\xi_{m+1}) &= x'_{m+1}.
\end{split}
\end{displaymath}
The inductive step is completed by setting
\begin{displaymath}
\begin{split}
x_{m+1} &:= \bar{x}_{m+1}+\xi_{m+1}, \\
y_{m+1} &:= \bar{y}_{m+1}\star\beta_{m+1} = \bar{y}_{m+1}+\beta_{m+1}.
\end{split}
\end{displaymath}

To prove that \eqref{eqn_modspcmap} is injective, consider quantizations $x,x'\in\MCfiberset{\mathfrak{g}}{x_0}$. The map $\Phi$ induces a map
\[ \Phi:\MorphH{x}{x'}\longrightarrow\MorphH{\Phi(x)}{\Phi(x')} \]
on homotopy equivalence classes of morphisms. It suffices to prove that this map is surjective, which is accomplished in exactly the same fashion as before using Proposition \ref{prop_obstructionmorph} and Proposition \ref{prop_freeactonmorphisms}.
\end{proof}

\subsection{Examples of Theorem \ref{thm_modspccorrespondence}}

We consider here some applications of Theorem \ref{thm_modspccorrespondence} to our running examples.

Let $A$ be a \emph{unital} cyclic $\ai$-algebra whose inner product has odd degree, and consider the map
\[ \widehat{\Mor}_{\gamma,\nu}:\NonComBV{\Sigma A}\longrightarrow\NonComBV{\Sigma\mat{N}{A}} \]
given by \eqref{eqn_OTFTmatrixdglamap}. This map is a map of strongly filtered differential graded Lie algebras, where the filtrations on each side are those defined by Example \ref{exm_noncomBVfiltration}. The cyclic $\ai$-structure $m_A$ on $A$ yields a solution $x_0:=\tilde{m}_A$ to the classical master equation for $\NonComBV{\Sigma A}$. The corresponding cyclic $\ai$-structure $m_{\mat{N}{A}}$ on matrices yields a solution $x'_0=\widehat{\Mor}_{\gamma,\nu}(x_0)$ to the classical master equation for $\NonComBV{\Sigma\mat{N}{A}}$.

\begin{theorem}
For $A$ a \emph{unital} cyclic $\ai$-algebra whose inner product has odd degree,
the map $\widehat{\Mor}_{\gamma,\nu}$ induces a one-to-one correspondence
\[ \widehat{\Mor}_{\gamma,\nu}:\MCfibermod{\NonComBV{\Sigma A}}{x_0}\longrightarrow\MCfibermod{\NonComBV{\Sigma\mat{N}{A}}}{x'_0} \]
between these moduli spaces of quantizations.
\end{theorem}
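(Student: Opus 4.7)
The plan is to apply Theorem~\ref{thm_modspccorrespondence} to the morphism $\Phi := \widehat{\Mor}_{\gamma,\nu}$. Since $\Phi$ is already a map of differential graded Lie algebras by Theorem~\ref{thm_OTFTmatrixdglamap}, and $x'_0 = \Phi(x_0)$ by construction, the task reduces to verifying two things: first, that $\Phi$ respects the order filtrations from Example~\ref{exm_noncomBVfiltration}, and second, that for every $m \geq 1$ the induced map on the associated graded pieces, with differential twisted by $x_0$, is a quasi-isomorphism.

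The first point is essentially by inspection. The map $\widehat{\Mor}_{\gamma,\nu}$ is defined as the $\gamma$-linear extension of $\Mor_P$ (so preserves the $\gamma$-weight $i$), it sends $\nu \mapsto N\nu$ (so preserves the $\nu$-weight $j$, simply rescaling $\nu^j$ by the invertible scalar $N^j$), and it acts tensor-factor-wise on $\csymp(\NCHam{-})$ (so preserves the symmetric-tensor degree $k$). All three ingredients of the order $2i+j+k-1$ are therefore preserved.

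For the second point, I would use the identification (\ref{eqn_Hochfiltrationcohomology}) of the associated graded complex as
\[
\prod_{\substack{i,j,k\geq 0,\ j+k>0 \\ 2i+j+k = m+1}} \gamma^i \nu^j \bigl[\CycHoch{A}^{\cotimes k}\bigr]_{\symg{k}},
\]
whose twisted differential is, up to sign, the cyclic Hochschild differential applied factor-wise. Under this identification $\Phi$ acts on each summand as $N^j \cdot \Mor_+^{\cotimes k}$ followed by symmetric coinvariants on the target. By Morita invariance (Theorem~\ref{thm_moritainvariance}), which is exactly where unitality of $A$ is used, $\Mor_+ \colon \CycHoch{A} \to \CycHoch{\mat{N}{A}}$ is a quasi-isomorphism. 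The Künneth theorem over the ground field $\gf$ extends this to $\Mor_+^{\cotimes k}$, and taking $\symg{k}$-coinvariants preserves quasi-isomorphisms in characteristic zero (since coinvariants coincide with invariants via the averaging idempotent, realizing them as a cohomologically trivial direct summand). As the index set is finite for each fixed $m$, the termwise quasi-isomorphisms assemble to a quasi-isomorphism on the whole associated graded piece.

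The main obstacle is verifying cleanly that the twisted differential really restricts to the factorwise cyclic differential on the associated graded. This requires observing that both $\nabla$ and $\gamma\delta$ from $\Delta_K$ strictly raise the order by $1$ (the former raises the symmetric degree $k$ by one, the latter trades a decrease in $k$ for the order $+2$ contribution of $\gamma$), so both die on $F_m/F_{m+1}$, while $\{x_0,-\}$ preserves order because $x_0 = \tilde m_A \in \NCHamP{\Sigma A}$ has symmetric degree one and its ad-action is a Leibniz derivation in the symmetric factors, reducing to the Hochschild differential on each $\CycHoch{A}$-tensorand via (\ref{eqn_Hochbracketdiff}). Once this decomposition is in place, the hypothesis of Theorem~\ref{thm_modspccorrespondence} is satisfied for all $m \geq 1$, and the desired bijection follows.
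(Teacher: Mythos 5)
Your proposal is correct and follows essentially the same route as the paper: the paper's proof simply cites Equation \eqref{eqn_Hochfiltrationcohomology}, Theorem \ref{thm_OTFTmatrixdglamap} and Theorem \ref{thm_moritainvariance} to verify the hypothesis of Theorem \ref{thm_modspccorrespondence}. You have merely spelled out the details (filtration preservation, the $N^j\cdot\Mor_+^{\cotimes k}$ description on associated graded, K\"unneth and $\symg{k}$-coinvariants in characteristic zero) that the paper leaves implicit.
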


\begin{proof}
It follows from Equation \eqref{eqn_Hochfiltrationcohomology}, Theorem \ref{thm_OTFTmatrixdglamap} and Theorem \ref{thm_moritainvariance} that $\widehat{\Mor}_{\gamma,\nu}$ satisfies the hypothesis of Theorem \ref{thm_modspccorrespondence}.
\end{proof}

For another application of this theorem, recall from Example \ref{exm_comglnfiltration} that the cyclic commutator $\li$-structure $l_N$ on $\gl{N}{A}$ determined by the unital cyclic $\ai$-structure on $A$ yields a solution $x_0$ to the classical master equation for $\ComBV{\Sigma\gl{N}{A}}^{\gl{N}{\gf}}$ and likewise for $\ComBV{\Sigma\gl{N}{A}}$. Consider the inclusion,
\begin{equation} \label{eqn_inclusionofinvariants}
\ComBV{\Sigma\gl{N}{A}}^{\gl{N}{\gf}}\longrightarrow\ComBV{\Sigma\gl{N}{A}}
\end{equation}
of the differential graded Lie subalgebra of invariants.

\begin{theorem}
For $A$ a \emph{unital} cyclic $\ai$-algebra whose inner product has odd degree,
the map \eqref{eqn_inclusionofinvariants} induces a one-to-one correspondence,
\[ \MCfibermod{\ComBV{\Sigma\gl{N}{A}}^{\gl{N}{\gf}}}{x_0}\longrightarrow\MCfibermod{\ComBV{\Sigma\gl{N}{A}}}{x_0} \]
between these moduli spaces of quantizations.
\end{theorem}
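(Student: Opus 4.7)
The plan is to apply Theorem \ref{thm_modspccorrespondence} directly. The map \eqref{eqn_inclusionofinvariants} is visibly a map of differential graded Lie algebras, and it respects the strong filtration by powers of $\hbar$ since the $\gl{N}{\gf}$-action preserves the $\hbar$-grading. Therefore it suffices to verify that on each associated graded piece, the inclusion induces an isomorphism on the cohomology computed with respect to the twisted differential $\{x_0,-\}$.

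Using the identification \eqref{eqn_invfiltrationcohomology} on the invariant side, together with the analogous identification
\[ \left(F_m\ComBV{\Sigma\gl{N}{A}}/F_{m+1}\ComBV{\Sigma\gl{N}{A}},\{x_0,-\}\right) = \left(\csym\Sigma\gl{N}{A}^*,-l_N\right) \]
on the ambient side (which follows from exactly the same reasoning, namely the relationship \eqref{eqn_CEbracketdiff} between the adjoint action and the Chevalley-Eilenberg differential), the required comparison becomes precisely the statement that the inclusion
\[ \left[\csym\Sigma\gl{N}{A}^*\right]^{\gl{N}{\gf}} \longrightarrow \csym\Sigma\gl{N}{A}^* \]
is a quasi-isomorphism of Chevalley-Eilenberg complexes for the $\li$-algebra $\gl{N}{A}$. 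This, however, is exactly the content of Theorem \ref{thm_invariantqiso}, whose hypothesis of unitality on the $\ai$-algebra $A$ is satisfied here.

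Combining these observations, Theorem \ref{thm_modspccorrespondence} immediately yields the desired bijection between the moduli spaces of quantizations. The only subtlety to check is that the cohomology isomorphism holds for \emph{every} $m \geq 1$, but since each filtration layer is, up to multiplication by $\hbar^m$, the same underlying complex (either the $\gl{N}{\gf}$-invariants of $\csym\Sigma\gl{N}{A}^*$ or the ambient complex, each equipped with the differential induced by $l_N$), the single application of Theorem \ref{thm_invariantqiso} covers all filtration levels uniformly. There is no real obstacle here: the work has already been done in establishing both the general obstruction-theoretic framework of Theorem \ref{thm_modspccorrespondence} and the classical Weyl-type invariant theory result Theorem \ref{thm_invariantqiso}, and the present theorem is essentially a formal consequence of fitting them together.
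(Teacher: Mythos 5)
Your proposal is correct and follows essentially the same route as the paper: the paper's proof likewise applies Theorem \ref{thm_modspccorrespondence} after identifying the associated graded cohomologies via Equations \eqref{eqn_CEfiltrationcohomology} and \eqref{eqn_invfiltrationcohomology} and invoking Theorem \ref{thm_invariantqiso} (which is where unitality enters). Your remark that a single application of Theorem \ref{thm_invariantqiso} covers all filtration levels uniformly is exactly the point implicit in those identifications.
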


\begin{proof}
It follows from Equation \eqref{eqn_CEfiltrationcohomology}, Equation \eqref{eqn_invfiltrationcohomology} and Theorem \ref{thm_invariantqiso} that \eqref{eqn_inclusionofinvariants} satisfies the hypothesis of Theorem \ref{thm_modspccorrespondence}.
\end{proof}

\begin{rem}
This theorem tells us that a quantization of our cyclic commutator $\li$-structure $l_N$ is essentially the same thing as a \emph{conjugation invariant} quantization of our $\li$-structure; or more precisely, that any quantization of $l_N$ may be replaced by an equivalent conjugation invariant quantization that is unique up to gauge equivalence. For this reason, in the following section we will focus only on such \emph{invariant} quantizations. The fundamental theorem of invariant theory for $\gl{N}{\gf}$ (upon which the proof of the Loday-Quillen-Tsygan Theorem is based) tells us essentially what such invariant quantizations of the action must look like; they must be generated by the trace maps \eqref{eqn_traceproduct} on matrices.
\end{rem}

\section{Noncommutative geometry in the large $N$ limit} \label{sec_largeN}

In this section we explain how the noncommutative geometry of Section \ref{sec_BVNonComGeom} and \ref{sec_MinvLQTBV} emerges naturally by considering the problem of the Batalin-Vilkovisky quantization of Chern-Simons type gauge theories in the large $N$ limit. In order to examine the large $N$ behavior of observables within the Batalin-Vilkovisky formalism, it is necessary to define a quantization of your action at \emph{each} rank $N$ of your theory. This leads us directly to the problem of \emph{simultaneous quantization}; we must carry out the quantization process so that it works simultaneously at every rank $N$. It is this problem that the differential graded Lie algebra constructed in Definition \ref{def_noncomquantumBV} solves, and it is the Loday-Quillen-Tsygan Theorem that explains why. The family \eqref{eqn_LQTdglamaps} of maps from this object that target each rank $N$ of the theory clearly provides a mechanism for any quantization that is built in this object to yield a quantization at all ranks $N$. In this way, this differential graded Lie algebra plays the role of a universal object, and any quantization built in it plays the role of a quantization that is `universal in $N$.' One of the main purposes of this section is to explain to what extent this construction is \emph{necessary}, and we prove what we consider to be a reasonable converse; the obstructions to quantization vanish at all ranks $N$ of the theory if and only if the obstruction to quantization vanishes in this universal object built using noncommutative geometry.

Let us explain in more detail. Our starting point is a cyclic $\ai$-algebra $A$ whose inner product has odd degree. We emphasize that for the purposes of this section, this algebra does \emph{not} need to be unital and that all the results of this section hold for nonunital algebras. From the viewpoint of Chern-Simons theory our motivating example would be to take $A$ to be the de Rham algebra on a closed odd-dimensional manifold; however, this of course is not finite-dimensional and so does not comport with Definition \ref{def_ainfinitycyclic}. Nonetheless, to keep this motivating point of view alive we may imagine that a finite-dimensional $A$ has been obtained from such an example through the homotopy transfer theorem. Denote the solution to the classical master equation corresponding to our cyclic $\ai$-structure by
\[\tilde{m}\in\MCset{\NCHam{\Sigma A}}, \]
and likewise denote by
\[\tilde{l}_N\in\MCset{\left[\csym\Sigma\gl{N}{A}^*\right]^{\gl{N}{\gf}}}, \quad N\geq 1, \]
the solutions to the classical master equations that are formed by the corresponding commutator $\li$-structures, as in Example~\ref{exm_comglnfiltration}.

Consider the diagram of strongly filtered differential graded Lie algebras (with, we emphasize, no horizontal arrows)
\begin{equation} \label{eqn_NCtoCS}
\xymatrix{ & \NonComBV{\Sigma A} \ar[ld]^{\qquad\cdots\cdots} \ar[d] \ar[dr] \ar[drrr]_{\cdots\cdots\quad} \\ \ComBV{\Sigma\gl{1}{A}}^{\gl{1}{\gf}} \ar@{.}[r] & \ComBV{\Sigma\gl{N}{A}}^{\gl{N}{\gf}} & \mkern-18mu \ComBV{\Sigma\gl{N+1}{A}}^{\gl{N+1}{\gf}} \ar@{.}[r] && }
\end{equation}
formed from the family of maps \eqref{eqn_LQTdglamaps}.
Applying the Maurer-Cartan functor to this yields a diagram of moduli spaces of quantizations:
\[ \xymatrix{ & \MCfibermod{\NonComBV{\Sigma A}}{\tilde{m}} \ar[d]_-{\cdots\cdots} \ar[rd]^{\cdots\cdots} \\ \ar@{.}[r] & \MCfibermod{\ComBV{\Sigma\gl{N}{A}}^{\gl{N}{\gf}}}{\tilde{l}_N} & \MCfibermod{\ComBV{\Sigma\gl{N+1}{A}}^{\gl{N+1}{\gf}}}{\tilde{l}_{N+1}} \ar@{.}[r] & } \]
It follows from this picture that any quantization of the cyclic $\ai$-structure $\tilde{m}$ living in $\NonComBV{\Sigma A}$ yields conjugation invariant quantizations of the corresponding Chern-Simons type theories \emph{for each rank $N$ of the theory}. These quantizations are formed by the images of the quantization of $\tilde{m}$ under the above family of mappings.

\begin{rem}
Consider the horizontal mappings
\[ \ComBV{\Sigma\gl{N}{A}}^{\gl{N}{\gf}} \longleftarrow \ComBV{\Sigma\gl{N+1}{A}}^{\gl{N+1}{\gf}} \]
that arise from the inclusion of $\gl{N}{A}$ into $\gl{N+1}{A}$.
These are \emph{not} maps of differential graded Lie algebras. Consequently, there is no natural way to pass from a quantization at some rank $N$ of the Chern-Simons type theory to a quantization at some lower rank. This emphasizes the utility of noncommutative geometry in the above in being able to produce quantizations at all ranks $N$.

Furthermore, we should mention that these horizontal maps do \emph{not} make the above diagram commute and hence the rank $N$ quantizations that are produced from a quantization of the cyclic $\ai$-structure $\tilde{m}$ are \emph{not} related by these mappings. The reason for this is the peculiar behavior of the parameter $\nu$ under the map \eqref{eqn_OTFTmatrixdglamap} which is sensitive to the rank $N$ of the theory.
\end{rem}

The purpose of the rest of this section is to describe a partial converse to this result. Consider the problem of \emph{simultaneously} quantizing the Chern-Simons type actions $\tilde{l}_N$ at each rank $N$ of the theory. As we know from Section \ref{sec_obsthy}, this will require the corresponding family of obstructions to vanish. We will see in Theorem \ref{thm_simquantaction} that, as a consequence of the Loday-Quillen-Tsygan Theorem, this will happen if and only if the obstruction to quantizing the action $\tilde{m}$ vanishes. In this sense the problem of simultaneously quantizing the Chern-Simons type actions at every rank $N$ leads us to build our quantization in the differential graded Lie algebra $\NonComBV{\Sigma A}$.

We require the following lemma, which follows as a simple consequence of the Loday-Quillen-Tsygan Theorem.

\begin{lemma} \label{lem_kerintzero}
Consider, for any fixed $p\geq 1$, the family of maps
\begin{multline} \label{eqn_uniquantmaps}
H^{\bullet}(F_p\NonComBV{\Sigma A}/F_{p+1}\NonComBV{\Sigma A},\{\tilde{m},-\}) \longrightarrow \\ H^{\bullet}(F_p\ComBV{\Sigma\gl{N}{A}}^{\gl{N}{\gf}}/F_{p+1}\ComBV{\Sigma\gl{N}{A}}^{\gl{N}{\gf}},\{\tilde{l}_N,-\}), \qquad N\geq 1;
\end{multline}
defined for a cyclic $\ai$-algebra $A$ whose inner product has odd degree, which are induced by the maps \eqref{eqn_NCtoCS} of strongly filtered differential graded Lie algebras. The intersection of the kernels of this family of maps is the zero subspace.
\end{lemma}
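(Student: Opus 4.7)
The plan is to use the finite direct-sum decomposition \eqref{eqn_Hochfiltrationcohomology} of the source, extract the polynomial $N$-dependence of the maps \eqref{eqn_uniquantmaps} explicitly, and reduce to an injectivity statement at large $N$ that follows from Theorem~\ref{thm_LQT} together with a finite-dimensionality argument.

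Unwinding the definitions of $\sigma_K$ and $\widehat{\Mor}_{\gamma,\nu}$, an arbitrary class
\[ \alpha \;=\; \sum_{(i,j,k)}\gamma^i\nu^j\alpha_{i,j,k} \]
in the source (sum over triples with $2i+j+k=p+1$, $j+k>0$, only finitely many nonzero $\alpha_{i,j,k}\in H^\bullet([\CycHoch{A}^{\cotimes k}]_{\symg{k}})$) is mapped at rank $N$ to $\hbar^p\sum_{(i,j,k)} N^j\,\Phi^k_N(\alpha_{i,j,k})$, where $\Phi^k_N$ denotes the restriction of the LQT map $\Phi_N\colon\csymp(\CycHoch{A})\to[\ChEilp{\gl{N}{A}}]^{\gl{N}{\gf}}$ to the $k$-th component and the factor $N^j$ arises from the rule $\nu\mapsto N\nu$ built into $\widehat{\Mor}_{\gamma,\nu}$. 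Grouping terms by $k$, the hypothesis that $\alpha$ lies in the kernel of \eqref{eqn_uniquantmaps} for every $N$ becomes
\[ \sum_k \Phi^k_N\bigl(\delta_k(N)\bigr) \;=\; 0\qquad\text{for every }N\geq 1, \]
where $\delta_k(N):=\sum_{2i+j+k=p+1} N^j\alpha_{i,j,k}$ is a polynomial in $N$ valued in the finite-dimensional subspace $U_k\subset H^\bullet([\CycHoch{A}^{\cotimes k}]_{\symg{k}})$ spanned by the relevant $\alpha_{i,j,k}$'s.

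Set $U:=\bigoplus_k U_k$, a finite-dimensional subspace of $\prod_k H^\bullet([\CycHoch{A}^{\cotimes k}]_{\symg{k}})$, which by Theorem~\ref{thm_LQT} is identified with $\invlim{N}{V_N}$ for $V_N:=H^\bullet([\ChEilp{\gl{N}{A}}]^{\gl{N}{\gf}})$; under this identification the map of interest is the composite $U \hookrightarrow \invlim{N}{V_N}\xrightarrow{\mathrm{pr}_N} V_N$. The subspaces $U\cap\ker(\mathrm{pr}_N)$ form a decreasing chain (since $\ker(\mathrm{pr}_{N+1})\subseteq\ker(\mathrm{pr}_N)$) with trivial intersection (the inverse-limit property); finite-dimensionality of $U$ then forces this chain to stabilize at $0$, producing $N_0$ with $\mathrm{pr}_N|_U$ injective for every $N\geq N_0$. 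Feeding this back into the displayed equation yields $\delta_k(N)=0$ in $U_k$ for every $N\geq N_0$, and since $\delta_k$ is a polynomial in $N$ with values in a vector space over a characteristic zero field that vanishes at infinitely many points, it must be identically zero, forcing every coefficient $\alpha_{i,j,k}$ to vanish.

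The main obstacle is the LQT-driven eventual-injectivity step producing $N_0$: one has to justify that the finite-dimensionality of $U$ combined with the inverse-limit identification of Theorem~\ref{thm_LQT} yields injectivity of the projections $\mathrm{pr}_N$ on $U$ for all sufficiently large $N$. Once this is in hand, the rest is bookkeeping on the definitions from Section~\ref{sec_MinvLQTBV}, and the conceptual upshot is that LQT converts the universal-in-$N$ vanishing hypothesis into the triviality of a polynomial in $N$ whose coefficients are precisely the components $\alpha_{i,j,k}$.
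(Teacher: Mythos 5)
Your argument is correct, and at the decisive step it differs from the paper's. Both proofs start the same way: use the decomposition \eqref{eqn_Hochfiltrationcohomology}, observe that the induced map at rank $N$ is the cohomology-level LQT map weighted by $N^{j}$ on the $\gamma^{i}\nu^{j}$-component, and finish with the characteristic-zero ``a nonzero polynomial has finitely many roots'' argument. The difference is how each handles the fact that the would-be coefficients of the polynomial in $N$ themselves depend on $N$. The paper fixes a rank $N$, takes the relations at all ranks $M\geq N$, and pushes them down to rank $N$ via the restriction maps of the inverse system \eqref{eqn_LQTinvlim}; this produces a genuine polynomial identity in $M$ with $M$-independent coefficients $\pi_N(z_j)$, whence $\pi_N(z_j)=0$ for all $N,j$, and then the injectivity part of \eqref{eqn_LQTinvlimhomology} gives $z_j=0$. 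You instead restrict to the finite-dimensional span $U$ of the finitely many components $\alpha_{i,j,k}$ (finiteness is automatic since $2i+j+k=p+1$ has finitely many solutions), note that the kernels of the projections $\mathrm{pr}_N$ restricted to $U$ form a decreasing chain with trivial intersection — which uses exactly the same compatibility of projections with the restriction maps — and conclude eventual injectivity of $\mathrm{pr}_N|_U$, after which the polynomial argument runs inside $U$. Your route buys a uniform treatment (no separate handling of the two factors in \eqref{eqn_Hochsplit}) at the cost of invoking finite-dimensionality of $U$; the paper's route needs no finiteness and separates powers of $M$ at a fixed rank before ever appealing to injectivity. One small bookkeeping point: Theorem \ref{thm_LQT} identifies only $\prod_{k\geq 1}H^{\bullet}\left(\left[\CycHoch{A}^{\cotimes k}\right]_{\symg{k}}\right)$ with $\invlim{N}{H^\bullet\left(\left[\ChEilp{\gl{N}{A}}\right]^{\gl{N}{\gf}}\right)}$, whereas your product implicitly includes the $k=0$ terms and your target should be $\gf\times H^\bullet\left(\left[\ChEilp{\gl{N}{A}}\right]^{\gl{N}{\gf}}\right)$ as in \eqref{eqn_CEsplit}; either treat the $k=0$ terms separately (as the paper does, via $\gamma^i\nu^j\mapsto N^j$) or enlarge the inverse system by the constant $\gf$ factor with identity structure maps — in both cases your chain-of-kernels argument goes through unchanged.
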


\begin{proof}
First, using \eqref{eqn_Hochfiltrationcohomology} and \eqref{eqn_invfiltrationcohomology} we may write;
\begin{multline} \label{eqn_Hochsplit}
H^{\bullet}(F_p\NonComBV{\Sigma A}/F_{p+1}\NonComBV{\Sigma A},\{\tilde{m},-\}) = \prod_{\begin{subarray}{c} i,j,k\geq 0: \\ j+k>0 \\ 2i+j+k=p+1 \end{subarray}}\gamma^i\nu^j H^{\bullet}\left(\left[\CycHoch{A}^{\cotimes k}\right]_{\symg{k}}\right) \\
=\left(\prod_{\begin{subarray}{c} i\geq 0,j\geq 1: \\ 2i+j=p+1 \end{subarray}}\gamma^i\nu^j\gf\right)\times\left(\prod_{\begin{subarray}{c} i,j\geq 0,k\geq 1: \\ 2i+j+k=p+1 \end{subarray}} \gamma^i\nu^j H^{\bullet}\left(\left[\CycHoch{A}^{\cotimes k}\right]_{\symg{k}}\right)\right)
\end{multline}
and
\begin{multline} \label{eqn_CEsplit}
H^{\bullet}(F_p\ComBV{\Sigma\gl{N}{A}}^{\gl{N}{\gf}}/F_{p+1}\ComBV{\Sigma\gl{N}{A}}^{\gl{N}{\gf}},\{\tilde{l}_N,-\}) = \\
H^{\bullet}\left(\left[\csym(\Sigma\gl{N}{A}^*)\right]^{\gl{N}{\gf}},l_N\right) = \gf\times H^\bullet\left(\left[\ChEilp{\gl{N}{A}}\right]^{\gl{N}{\gf}}\right).
\end{multline}
The maps \eqref{eqn_uniquantmaps} respect the decompositions \eqref{eqn_Hochsplit} and \eqref{eqn_CEsplit}. Therefore it makes sense to begin by examining the behavior of these maps on the left-hand factors:
\[ \prod_{\begin{subarray}{c} i\geq 0,j\geq 1: \\ 2i+j=p+1 \end{subarray}}\gamma^i\nu^j\gf \longrightarrow \gf, \quad \gamma^i\nu^j \mapsto N^j; \qquad N\geq 1. \]
Since we work in characteristic zero, and since a nontrivial polynomial may have only finitely many roots, it follows that the intersection of the kernels of the above maps is trivial.

Now we examine the behavior of the maps \eqref{eqn_uniquantmaps} on the right-hand factors of \eqref{eqn_Hochsplit} and \eqref{eqn_CEsplit}, which we rewrite as follows:
\begin{equation} \label{eqn_uniquantmapsdiag}
\xymatrix{\prod_{\begin{subarray}{c} i,j\geq 0,k\geq 1: \\ 2i+j+k=p+1 \end{subarray}} \gamma^i\nu^j H^{\bullet}\left(\left[\CycHoch{A}^{\cotimes k}\right]_{\symg{k}}\right) \ar[r] \ar@{=}[d] & H^\bullet\left(\left[\ChEilp{\gl{N}{A}}\right]^{\gl{N}{\gf}}\right) \\ \prod_{j=0}^p\nu^j\left(\prod_{i=0}^{\left[\frac{p-j}{2}\right]}\gamma^i H^{\bullet}\left(\left[\CycHoch{A}^{\cotimes p+1-2i-j}\right]_{\symg{p+1-2i-j}}\right)\right) \ar[ru] }
\end{equation}
where the top row is the map \eqref{eqn_uniquantmaps} and the diagonal map may be described as follows. Denote the natural projection maps of the inverse limit \eqref{eqn_LQTinvlimhomology} by
\[ \pi_N:\prod_{k=1}^\infty H^{\bullet}\left(\left[\CycHoch{A}^{\cotimes k}\right]_{\symg{k}}\right) = H^{\bullet}\left(\csymp\left(\CycHoch{A}\right)\right) \longrightarrow H^\bullet\left(\left[\ChEilp{\gl{N}{A}}\right]^{\gl{N}{\gf}}\right). \]
Then the diagonal map of \eqref{eqn_uniquantmapsdiag} is the map
\[ \sum_{j=0}^p \nu^j z_j \mapsto \sum_{j=0}^p N^j\pi_N(z_j), \qquad z_j\in\prod_{i=0}^{\left[\frac{p-j}{2}\right]}\gamma^i H^{\bullet}\left(\left[\CycHoch{A}^{\cotimes p+1-2i-j}\right]_{\symg{p+1-2i-j}}\right), \]
where $\gamma^i H^{\bullet}\left(\left[\CycHoch{A}^{\cotimes k}\right]_{\symg{k}}\right)$ is identified with $H^{\bullet}\left(\left[\CycHoch{A}^{\cotimes k}\right]_{\symg{k}}\right)$ by setting $\gamma:=1$.

Now suppose that the element $z=\sum_{j=0}^p \nu^j z_j$ lies in the kernel of every map \eqref{eqn_uniquantmaps}, then for all $M\geq N\geq 1$,
\[ \sum_{j=0}^p M^j\pi_M(z_j) = 0 \qquad\text{and therefore}\qquad \sum_{j=0}^p M^j\pi_N(z_j) = 0; \]
where in the last equation we have applied the horizontal arrows of the inverse limit \eqref{eqn_LQTinvlim}. Just as before, since we work in characteristic zero, it follows that
\[ \pi_N(z_j) = 0, \qquad\text{for all $N\geq 1$ and $j=0,\ldots,p$.} \]
From \eqref{eqn_LQTinvlimhomology} it follows that each $z_j=0$.
\end{proof}

Now we may formulate and prove our theorem on quantization in the large $N$ limit.

\begin{theorem} \label{thm_simquantaction}
Let $A$ be a cyclic $\ai$-algebra whose inner product has odd degree and let
\[ x_k\in\MCfibermod{\NonComBV{\Sigma A}/F_{k+1}\NonComBV{\Sigma A}}{\tilde{m}} \]
be a level $k$ quantization of the cyclic $\ai$-structure $\tilde{m}$. Consider the corresponding family
\[ w_k^N\in\MCfibermod{\ComBV{\Sigma\gl{N}{A}}^{\gl{N}{\gf}}/F_{k+1}\ComBV{\Sigma\gl{N}{A}}^{\gl{N}{\gf}}}{\tilde{l}_N}, \quad N\geq 1, \]
of level $k$ quantizations of the cyclic $\li$-structures $\tilde{l}_N$ that is determined by the diagram \eqref{eqn_NCtoCS} of strongly filtered differential graded Lie algebras.

The level $k$ quantization $x_k$ of $\tilde{m}$ extends to a level $(k+1)$ quantization if and only if every level $k$ quantization $w^N_k$ of $\tilde{l}_N$ extends to a level $(k+1)$ quantization for all $N\geq 1$.
\end{theorem}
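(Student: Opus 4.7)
The plan is to reduce the theorem to the obstruction-theoretic characterization of extensions given in Proposition~\ref{prop_obstructionstruct}, and then use Lemma~\ref{lem_kerintzero} to transfer the obstruction-vanishing condition between the noncommutative side and the family of commutative sides.

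First I would observe that each map in the diagram \eqref{eqn_NCtoCS} is a morphism of strongly filtered differential graded Lie algebras sending $\tilde{m}$ to $\tilde{l}_N$; consequently, passing to the successive quotients of the filtrations and then to cohomology with the twisted differentials, we obtain, for each $N \geq 1$, an induced map
\[ \Phi_N^*: H^1\!\bigl(F_{k+1}\NonComBV{\Sigma A}/F_{k+2}\NonComBV{\Sigma A}, \{\tilde{m}, -\}\bigr) \longrightarrow H^1\!\bigl(F_{k+1}\ComBV{\Sigma\gl{N}{A}}^{\gl{N}{\gf}}/F_{k+2}\ComBV{\Sigma\gl{N}{A}}^{\gl{N}{\gf}}, \{\tilde{l}_N,-\}\bigr). \]
A choice of representative $\tilde{x}_k$ for $x_k$ yields a representative of $w_k^N$ under the chain-level map, and since that map intertwines both $d$ and $\{-,-\}$, applying it to $d\tilde{x}_k + \tfrac12\{\tilde{x}_k,\tilde{x}_k\}$ shows that $\Phi_N^*(\Obs{x_k}) = \Obs{w_k^N}$ as cohomology classes.

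For the ``only if'' direction, if $x_k$ extends to a level $(k{+}1)$ quantization, then Proposition~\ref{prop_obstructionstruct} gives $\Obs{x_k} = 0$, whence by functoriality $\Obs{w_k^N} = \Phi_N^*(\Obs{x_k}) = 0$ for every $N$, and a second application of Proposition~\ref{prop_obstructionstruct} shows that each $w_k^N$ extends. For the converse, suppose every $w_k^N$ extends. Then by Proposition~\ref{prop_obstructionstruct} each $\Obs{w_k^N} = 0$, so $\Obs{x_k}$ lies in the intersection of the kernels of the family $\{\Phi_N^*\}_{N\geq 1}$. Setting $p = k+1$ in Lemma~\ref{lem_kerintzero}, this intersection is trivial, hence $\Obs{x_k} = 0$, and one last application of Proposition~\ref{prop_obstructionstruct} furnishes the desired extension.

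The main obstacle — namely, the joint injectivity of the family $\{\Phi_N^*\}_{N\geq 1}$ in $H^1$ — is exactly the content of Lemma~\ref{lem_kerintzero} and need not be confronted directly; that lemma in turn rests on the Loday-Quillen-Tsygan Theorem (which provides the inverse-limit description of cyclic cohomology and so handles the summands indexed by $k \geq 1$ in the decomposition \eqref{eqn_Hochsplit}) together with the elementary fact that a nontrivial polynomial in $\nu$ has only finitely many roots over a field of characteristic zero (which handles the ``constant'' summand generated by the parameters $\gamma$ and $\nu$). Thus everything is packaged so that the theorem reduces to an almost mechanical application of obstruction theory, with all the real work absorbed into Lemma~\ref{lem_kerintzero}.
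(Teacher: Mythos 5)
Your proposal is correct and follows essentially the same route as the paper: the converse direction is exactly the paper's argument (obstructions $\Obs{w_k^N}$ vanish, they are the images of $\Obs{x_k}$ under the maps \eqref{eqn_uniquantmaps}, and Lemma~\ref{lem_kerintzero} with $p=k+1$ forces $\Obs{x_k}=0$, so Proposition~\ref{prop_obstructionstruct} gives the extension). The only cosmetic difference is in the easy direction, where the paper simply pushes a chosen extension $x_{k+1}$ forward along \eqref{eqn_NCtoCS} rather than passing through the vanishing of obstruction classes, but both arguments are valid.
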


\begin{proof}
Obviously any level $(k+1)$ extension $x_{k+1}$ of the level $k$ quantization $x_k$ yields level $(k+1)$ extensions of the level $k$ quantizations $w_k^N$ for all $N\geq 1$ by considering the images of $x_{k+1}$ under the maps \eqref{eqn_NCtoCS}.

Conversely suppose that each level $k$ quantization $w^N_k$ extends to a level $(k+1)$ quantization for every $N\geq 1$. By Proposition \eqref{prop_obstructionstruct} the family of obstructions
\[ \Obs{w^N_k}\in H^{\bullet}\left(F_{k+1}\ComBV{\Sigma\gl{N}{A}}^{\gl{N}{\gf}}/F_{k+2}\ComBV{\Sigma\gl{N}{A}}^{\gl{N}{\gf}},\{\tilde{l}_N,-\}\right), \qquad N\geq 1; \]
must all vanish. Since these obstructions are the images of the obstruction $\Obs{x_k}$ under the family of maps \eqref{eqn_uniquantmaps} it follows from Lemma \ref{lem_kerintzero} that the obstruction $\Obs{x_k}$ must vanish and hence by Proposition \ref{prop_obstructionstruct} that the level $k$ quantization $x_k$ may be extended to a level $(k+1)$ quantization.
\end{proof}

\begin{rem}
To reiterate, let us lay out once more how the above theorem explains the emergence of noncommutative geometry in the large $N$ limit quantization of Chern-Simons type theories. We begin with a solution $x_0$ to the classical master equation and in order to quantize \emph{every} Chern-Simons type action $w^N_0$ we must, by the above theorem, be able to extend $x_0$ to a level 1 quantization $x_1$. This solution $x_1$ leads to level 1 quantizations $w^N_1$  for all $N\geq 1$ by virtue of Diagram \eqref{eqn_NCtoCS} and in order to extend \emph{all} of these we must, again by the above theorem, be able to extend $x_1$ to a level 2 quantization $x_2$... and so the process goes on with us building our quantization $x$ in $\NonComBV{\Sigma A}$.
\end{rem}

We can also formulate an analogue of Theorem \ref{thm_simquantaction} for morphisms, which is naturally more technical.

\begin{theorem}
Let $A$ be a cyclic $\ai$-algebra whose inner product has odd degree and let
\[ x_k,x'_k\in\MCfiberset{\NonComBV{\Sigma A}/F_{k+1}\NonComBV{\Sigma A}}{\tilde{m}} \]
be two level $k$ quantizations of the cyclic $\ai$-structure $\tilde{m}$. Suppose that
\[ x_{k+1},x'_{k+1}\in\MCfiberset{\NonComBV{\Sigma A}/F_{k+2}\NonComBV{\Sigma A}}{\tilde{m}} \]
are two level $(k+1)$ quantizations extending $x_k$ and $x'_k$ respectively. Suppose further that we are given a level $k$ morphism
\[ y_k\in\MorphH{x_k}{x'_k}. \]
Now consider the quantizations and morphisms
\begin{displaymath}
\begin{array}{c}
w^N_k,w'^N_k\in\MCfiberset{\ComBV{\Sigma\gl{N}{A}}^{\gl{N}{\gf}}/F_{k+1}\ComBV{\Sigma\gl{N}{A}}^{\gl{N}{\gf}}}{\tilde{l}_N} \\
w^N_{k+1},w'^N_{k+1}\in\MCfiberset{\ComBV{\Sigma\gl{N}{A}}^{\gl{N}{\gf}}/F_{k+2}\ComBV{\Sigma\gl{N}{A}}^{\gl{N}{\gf}}}{\tilde{l}_N} \\
Y^N_k\in\MorphH{w^N_k}{w'^N_k}
\end{array}
, \qquad N\geq 1;
\end{displaymath}
which are the respective images of the preceding quantizations and morphisms under the family of maps~\eqref{eqn_NCtoCS}.

The level $k$ morphism $y_k$ from $x_k$ to $x'_k$ extends to a level $(k+1)$ morphism from $x_{k+1}$ to $x'_{k+1}$ if and only if for all $N\geq 1$, the level $k$ morphism $Y^N_k$ from $w^N_k$ to $w'^N_k$ extends to a level $(k+1)$ morphism from $w^N_{k+1}$ to $w'^N_{k+1}$.
\end{theorem}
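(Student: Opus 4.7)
The plan is to transpose the proof of Theorem \ref{thm_simquantaction} from structures to morphisms, substituting Proposition \ref{prop_obstructionmorph} for Proposition \ref{prop_obstructionstruct}. The filtration-respecting dg Lie algebra maps of diagram \eqref{eqn_NCtoCS}, together with Lemma \ref{lem_kerintzero} and the associated family of cohomology maps \eqref{eqn_uniquantmaps}, provide exactly the tools needed, and no further ingredients are required.

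For the forward direction, suppose $y_{k+1} \in \Morphset{k+1}{x_{k+1}}{x'_{k+1}}$ extends $y_k$. Since each map in \eqref{eqn_NCtoCS} preserves the filtration, its image on $y_{k+1}$ lies in $F_1/F_{k+2}$ on the commutative side and reduces mod $F_{k+1}$ to $Y^N_k$. Moreover, the identity $\exp(y_{k+1}) \cdot x_{k+1} = x'_{k+1}$ is transported faithfully, because the formula \eqref{eqn_MCact} for the exponential action involves only $d$ and $\{-,-\}$, both of which any dg Lie algebra morphism preserves by definition. Thus each image provides a level $(k+1)$ morphism from $w^N_{k+1}$ to $w'^N_{k+1}$ extending $Y^N_k$.

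For the converse, Proposition \ref{prop_obstructionmorph} recasts the existence of a level $(k+1)$ extension of $y_k$ (relative to the given lifts $x_{k+1}, x'_{k+1}$) as the vanishing of the obstruction class
\[
\Obs{y_k} \;=\; x'_{k+1} - \exp(y_k) \cdot x_{k+1} \;\in\; H^0\!\left(F_{k+1}\NonComBV{\Sigma A}/F_{k+2}\NonComBV{\Sigma A},\,\{\tilde{m},-\}\right),
\]
and analogously for each $Y^N_k$ relative to $w^N_{k+1}$ and $w'^N_{k+1}$. Because the maps in \eqref{eqn_NCtoCS} are dg Lie maps preserving the filtration, on associated graded cohomology they reproduce the family \eqref{eqn_uniquantmaps} at $p = k+1$ and send $\Obs{y_k}$ to $\Obs{Y^N_k}$. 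By hypothesis each $\Obs{Y^N_k}$ vanishes, placing $\Obs{y_k}$ in the intersection of the kernels of the maps \eqref{eqn_uniquantmaps}, which is the zero subspace by Lemma \ref{lem_kerintzero}. A second application of Proposition \ref{prop_obstructionmorph} then delivers the required extension of $y_k$.

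The single step requiring care, and what I would flag as the main (if modest) obstacle, is the naturality identity equating the image of $\Obs{y_k}$ in cohomology with $\Obs{Y^N_k}$. This follows because the defining expression $x'_{k+1} - \exp(y_k) \cdot x_{k+1}$ is a universal Lie-polynomial in $d$, $\{-,-\}$, $y_k$, $x_{k+1}$, and $x'_{k+1}$, and is therefore preserved on the nose by any dg Lie algebra map; the passage to cohomology classes is unambiguous in view of the coboundary identity \eqref{eqn_morphliftobs}. Once this compatibility is in hand, the argument is essentially a verbatim echo of the structural case.
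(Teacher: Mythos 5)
Your proposal is correct and matches the paper's own argument, which simply transposes the proof of Theorem \ref{thm_simquantaction} to morphisms using Proposition \ref{prop_obstructionmorph} in place of Proposition \ref{prop_obstructionstruct}, together with Lemma \ref{lem_kerintzero}. The naturality of the obstruction class under the filtration-preserving dg Lie maps of \eqref{eqn_NCtoCS}, which you flag as the delicate point, is exactly the routine compatibility the paper's ``mutatis mutandis'' relies on.
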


\begin{proof}
The proof follows the proof of Theorem \ref{thm_simquantaction} mutatis mutandis using Proposition \ref{prop_obstructionmorph} and Lemma \ref{lem_kerintzero}.
\end{proof}

\end{document}